\def\s|{\, | \,}
\def\ra{\longrightarrow}
\def\emptyset{\varnothing}
\def\g{\mathbf{g}}
\def\<{\langle}
\def\>{\rangle}
\def\varps{\varepsilon}
\def\Lamb{\Lambda'}
\def\flag{\mathcal{F}}
\def\ol{\overline}
\def\L{\mathfrak{L}}
\def\N{\mathbb{N}}
\def\Z{\mathbb{Z}}
\def\Q{\mathbb{Q}}
\def\R{\mathbb{R}}
\def\C{\mathbb{C}}
\def\F{\mathbb{F}}
\def\G{\mathbb{G}}
\def\i{\iota}
\def\X{\mathbf{X}}
\def\Y{\mathbf{Y}}
\def\p{\mathfrak{p}}
\def\P{\mathbb{P}}
\def\n{\mathfrak{n}}
\def\T{\mathfrak{T}}
\def\ns{\triangleleft}
\def\GL{\mathrm{GL}}
\def\I{\mathrm{I}}
\def\isom{\cong}
\def\S{\mathcal{S}}
\def\det{\mathrm{det}^{\frac{1}{2}}}
\newtheorem{theo}{Theorem}[section]
\newtheorem{lemma}[theo]{Lemma}
\newtheorem{cor}[theo]{Corollary}
\newtheorem{prop}[theo]{Proposition}
\theoremstyle{definition}
\newtheorem{Def}[theo]{Definition}
\newtheorem{ex}[theo]{Example}
\newtheorem{remark}[theo]{Remark}
\begin{document}

\title{The geometric structure of class two nilpotent groups and subgroup growth}

\author{Pirita Paajanen\\ School of Mathematics\\Southampton
  University\\ University Road\\ Southampton SO17 1BJ, UK\\p.m.paajanen@soton.ac.uk}

\maketitle

\begin{abstract}
In this paper we derive an explicit expression for the normal zeta
function of class two nilpotent groups whose associated Pfaffian
hypersurface is smooth. In particular, 
we show how the local zeta function depends on counting
$\F_p$-rational points on related varieties, and we describe the varieties that
can appear in such a decomposition.
As a
corollary, we also establish explicit results on the degree of polynomial
subgroup growth in these groups, 
and we study the behaviour of poles of this zeta function. 
Under certain geometric conditions, we also confirm
that these functions satisfy a functional equation.
\end{abstract}

\section{Introduction}

%\subsection{General introduction}

Zeta functions of a finitely generated group $G$
were introduced in \cite{GSS} as a non-commutative
analogue to the Dedekind zeta function of a number field. They are
used to study the arithmetic and asymptotic properties of the sequence of
numbers $$a_n(G)=|\{H\leq_f G: |G:H|=n \}|.$$ The fact that $G$ is
finitely generated ensures that $a_n(G)$ is finite for all $n$.
The term {\it subgroup growth} is used to describe the study of the sequences
$a_n(G)$ and $s_n(G)=\sum_{i=1}^na_i(G)$.
 
We can also consider different variants of subgroup growth, for
example, we may only count the normal subgroups 
of finite index. To do this we define the sequence
$$a_n^\ns(G)=|\{H\ns_f G: |G:H|=n \}|.$$ Or we can define a sequence
$$a_n^\wedge(G)=|\{H\leq_f G: |G:H|=n,
\hat{H}\isom \hat{G} \}|,$$  to count those subgroups whose
profinite completion is isomorphic to the profinite completion of
the group itself.  We define the
\emph{zeta function} of $G$ to be the formal Dirichlet series
$$\zeta_G^*(s)=\sum_{n=1}^\infty
 a^*_n(G)n^{-s},$$ where $*\in\{\leq,\ns,\wedge\},$ 
and $s$ is a complex variable. 
The abscissa of convergence of $\zeta_G^*(s)$, which we denote by $\alpha_G^*$, determines \emph{the degree of
polynomial subgroup growth} since $$\alpha_G^*:=\inf\{\alpha\geq 0:\
\mathrm{there\ exists}\ c>0\ \mathrm{with}\ s_n^*(G)< cn^{\alpha}\
\mathrm{for\ all}\ n\}.$$

When $G$ is a finitely generated torsion-free 
nilpotent group, called henceforth a $\T$-group, the
function $a_n^*(G)$ grows polynomially and is multiplicative, in the sense that 
$a_n^*(G)=\prod_i a_{p_i^{k_i}}^*(G)$
where $n=\prod_i p_i^{k_i}$. Therefore, like the zeta
functions in number theory, we can write the Dirichlet series as
an Euler product decomposition of \emph{the local zeta functions}, i.e.,
$$\zeta^*_G(s)=\prod_{p\ prime} \zeta^*_{G,p}(s),$$ where
$$\zeta_{G,p}^*(s)=\sum_{k=0}^\infty a^*_{p^k}(G)p^{-ks}$$ 
counts only the subgroups with $p$-power index.

A fundamental theorem of Grunewald, Segal and Smith \cite{GSS} states 
that these local zeta functions of $\T$-groups are
rational functions in $p^{-s}$. This rationality implies
that the coefficients $a_n^*(G)$ behave
smoothly; in particular, they satisfy a linear recurrence relation. This is
proved by expressing the zeta function as a $p$-adic integral and then
using a model theoretic black box to deduce the rationality.
Results of Denef on $p$-adic integrals arising 
from a local Igusa zeta function have been generalised  
by Grunewald and du Sautoy in \cite{Annals} and by Voll \cite{Vollnew}.
This has lead to
some new interesting problems. For example it transpires that 
the Hasse-Weil zeta function
of smooth projective varieties is possibly a 
better analogue for the zeta functions in the group setting than the Dedekind
zeta function of number fields, as previously mentioned.
This initiated an arithmetic-geometric approach 
to zeta functions of groups. Theorem 1.6 in
\cite{Annals} gives an explicit decomposition for the zeta function as
a sum of rational functions $P_i(p,p^{-s}),$  with coefficients
coming from counting points mod $p$ on various varieties. 
In a subsequent paper \cite{Elliptic} du 
Sautoy presents a group whose zeta function depends
on the $\F_p$-rational points on an elliptic curve.
The explicit zeta function $\zeta_{G,p}^\ns(s)$ of 
this elliptic curve example is calculated 
by Voll in \cite{Amer}, and he also proves that it satisfies a
functional equation.
Since the zeta function depends on counting the number of
$\F_p$-points on the elliptic curve, an expression that is 
more complicated than just a polynomial in $p$, the 
proof that it satisfies a functional equation uses 
the fact that the Hasse-Weil zeta function of smooth 
projective varieties also satisfies a functional equation.
In \cite{FE}, he extends this to all smooth hypersurfaces, 
with the restriction that these hypersurfaces should not contain any lines.

In the current paper, we generalise Voll's 
work by removing the condition on linear subspaces.
We still require that the hypersurface associated with the groups is
smooth.
The main contribution of this generalisation is that it sheds some light on which
varieties can arise in the context of zeta functions of groups. 
In particular, we can describe the varieties  and the
numerical data associated to the poles of the zeta function quite
explicitly. 

Let us now define the particular varieties that will be of interest to us.

\begin{Def}\label{pfaffian}
Let $\Gamma$ be a class 2 $\T$-group with $Z(\Gamma)=[\Gamma,\Gamma]$, and a presentation of the form
$$\Gamma:=\<x_1,\dots,x_d,y_1,\dots,y_{d'}:[x_i,x_j]=M(\mathbf{y})_{ij} \>,$$ where
$d$ is even and
$M(\mathbf{y})$ is the matrix of relations; an antisymmetric $d\times d$ matrix with entries that are
linear forms in the $y_i$. We call the variety defined by the equation
\begin{equation*}
\mathfrak{P}_\Gamma:   \det M(\mathbf{y}) =0
\end{equation*}
the {\it Pfaffian hypersurface associated to $\Gamma$.} We exclude from
this definition the case when the determinant is identically zero. 

We shall extend this definition for a general 
class 2 $\T$-group $G$ by writing $G=\Gamma\times \Z^m$, where
$\Gamma$ is as above and $m\geq 0$. Then $\mathfrak{P}_G:=\mathfrak{P}_\Gamma$.
\end{Def}

Note that the condition that $d$ is an even number is necessary to
ensure that the corresponding Pfaffian hypersurface is
non-trivial. Similarly the extended definition, we want to exclude the
$\Z^m$, because otherwise the Pfaffian would
be identically zero.

\begin{Def}\label{grassmannian} The set of  $(k-1)$-planes in $\P^{d'-1}$ forms a variety called the Grassmannian, denoted by $\G(k-1,d'-1).$  The affine analogue is the set of $k$-planes in affine $d'$-dimensional space, which we denote by $G(k,d')$.
\end{Def}

It is well-known that $\G(k-1,d'-1)$ can be embedded
in $\P^N$ via the Pl\"ucker embedding, where $N = \binom{n}{k}$. 

\begin{Def}\label{fano}
The Fano variety associated to a hypersurface $X\subset \P^{d'-1}$ is the variety 
$$F_{k-1}(X)=\{\Pi \in\G(k-1,d'-1): \Pi\subset X\} $$ 
of $(k-1)$-planes contained in $X$.
\end{Def}

By abuse of notation we will identify $(k-1)$-dimensional linear spaces
$\Pi \in \P^{d'-1}$ with the corresponding point $\Pi\in \G(k-1,d'-1)$.

We shall show in this paper that we can derive a decomposition of the
normal local zeta function of $G$, which is similar to that of Theorem
1.6 in \cite{Annals}, and also describe
the varieties that appear in our decomposition exactly as the
Pfaffian hypersurface, and components 
of the Fano varieties of the linear subspaces contained
on the Pfaffian. 
Here  components may arise because the Fano variety 
may be reducible and, moreover, since we consider
determinantal varieties, the rank of the defining matrix needs to
be taken into account, and different components may give different ranks.

The local zeta functions of finitely generated torsion-free nilpotent
groups calculated over the years, see  \cite{zetabook} for a
collection of these, exhibit curious symmetries
which suggest the possibility all zeta functions of groups
may have a similar local functional equation. 
In a recent paper \cite{Vollnew}, 
Voll has shown that a functional equation is satisfied for local zeta
functions of $\T$-groups counting all subgroups, conjugacy classes of subgroups and
representations. 
However, for the local normal zeta function Voll's proof works only 
in nilpotency class two. Explicitly, let $G=\Gamma\times \Z^m$ be a class two
nilpotent group, where $Z(\Gamma)=[\Gamma,\Gamma]$ with
$h(\Gamma^{ab})=d$ and $h(Z(\Gamma))=d'$, which yields $h(G)=d+d'+m$,
$h(Z(G))=m+d'$, $h([G,G])=d'$, $h(G/[G,G])=m+d$ and $h(G/Z(G))=d$. Here  $h$ is the Hirsch length,
which for a finitely generated group $G$ is 
the number of
infinite cyclic factors in a subnormal series of $G$.
Then, as Voll shows,
the functional equation in the class two is of
the form \begin{equation*}
\zeta_{G,p}^\ns(s)|_{p\mapsto p^{-1}} =
(-1)^{d+d'+m}p^{\binom{d+d'+m}{2}-(2d+d'+m)s} \zeta_{G,p}^\ns(s).
\end{equation*}

Examples of local normal zeta functions of  groups of nilpotency class three
calculated by Woodward \cite{zetabook} show that a
functional equation is not always satisfied. However, \cite{zetabook}
contains also a number of examples of local normal zeta functions in
higher classes where the functional equation holds. It remains
mysterious when there is a functional equation for the normal zeta function
and when not.

Other type of results on functional equations connected to zeta
functions of groups include work by Lubotzky and du Sautoy
\cite{algebraic} on zeta functions counting those subgroups whose
profinite completion is isomorphic to the profinite completion.
These results have been partially generalised by
Berman \cite{Mark}.

In the current paper we confirm with a different method than Voll's 
that the functional equation is satisfied in the case
where the Pfaffian hypersurface 
associated to the group, and the components of its Fano varieties, are smooth
and absolutely
irreducible.
This is the best possible result using our
methods, since the final step uses the fact that the Hasse-Weil
zeta function of points on a smooth 
and absolutely irreducible algebraic variety satisfies a
functional equation.

Our main theorem also has applications to subgroup growth. In particular, we provide evidence that some of the best bounds known
so far for subgroup growth may be exact. 
We also deduce results on the number and nature of poles of the normal zeta function for class two nilpotent groups.

%The idea of the paper is to decompose the zeta function in a geometrically meaningful way, manipulate and recompose it.
%By the Euler product decomposition, it suffices to consider only local zeta functions, and further by Mal'cev correspondence calculate
%the zeta function of the corresponding $p$-adic  Lie ring.
%The second reduction is using a lemma from \cite{GSS} which expresses the zeta function as a sum over lattices in
%the centre of the group only. However, this
%leaves us with a task of calculating a rather difficult function depending on the lattice in the centre of the group.
%Essentially this function depends on how a flag 
%associated with this lattice intersects the Pfaffian hypersurface,
%associated with the group $G$. The main technical work in this paper is to analyse this function and its dependence on some delicate geometric conditions.

We refer the reader to \cite{ICM} for the 
most resent survey of the general theory of these zeta functions.

\subsection{Results}

Before stating our main results, we begin with a few preliminary definitions.

A classical result, e.g. \cite{GSS} states that
the zeta function of the free abelian group of rank $d$ 
can be expressed in terms of $\zeta(s)$, the Riemann zeta function, as follows:
$$\zeta_{\Z^d}(s)=\zeta(s)\zeta(s-1)\dots\zeta(s-(d-1)).$$  
We write $\zeta_p(s)=\frac{1}{1-p^{-s}}$ for its local factors, where $p$ is a prime.

\begin{Def} 
A \emph{flag} of type $I$ in $V=\F_p^{d'}$, where
$I=\{i_1,\dots,i_l\}_<\subseteq\{1,\dots,d'\},$ (where 
$\{i_1,\dots,i_l\}_<$  is an indexing set ordered such that $i_1<i_2<\dots<i_l$) is a sequence $(V_{i_j})_{i_j\in I}$
of incident vector spaces
$$\{0\}<V_{i_1}<\dots<V_{i_l}\leq V$$ with
$\mathrm{dim}_{\F_p}(V_{i_j})=i_j$. The flags of type $I$ form a projective variety
$\flag_I$, whose number of $\F_p$-points is denoted by
$b_I(p)\in\Z[p]$, where $$b_I(p)=\binom{d'}{i_l}\binom{i_l}{i_{l-1}}\dots\binom{i_3}{i_2}\binom{i_2}{i_1}.$$
\end{Def}

\begin{Def}\label{I-factor}
The \emph{Igusa factor}
is a rational function in the set of variables $\mathbf{U}=\{U_1,\dots,U_n\}$ defined by $$I_n(\mathbf{U})=\sum_{I\subseteq \{1,\dots,n\}}
b_I(p^{-1})\prod_{i\in I}\frac{U_i}{1-U_i},$$ where $b_I(p)$ is
the number of flags of type $I$ in  $\F_p^{n+1}$.
\end{Def}

\begin{Def}
We call a rational function of the form
\begin{equation*} E_{\iota_i}(X_i,Y_{\iota_i}) =
\frac{p^{-d_{\iota_i}}Y_{\i_i} - p^{-n_i}X_i}{(1-X_i)(1-Y_{\i_i})}
\end{equation*} the $\i_i^{th}$ \emph{exceptional factor} with numerical
data $n_i$ and $d_{\i_i}$.
\end{Def}

Our main theorem is the following
\begin{theo}\label{main theorem general}
Let $G=\Gamma\times \Z^m$ be a class two nilpotent group such that
$Z(\Gamma)=
[\Gamma,\Gamma]$ and 
$d=h(G/Z(G))$ is an even number. Set $d'=h([G,G])$  
and let $\mathfrak{P}_G$ be the
associated Pfaffian hypersurface, which we assume to be reduced, smooth and non-zero.
Then for almost all primes $p$
\begin{equation*} \zeta_{G,p}^\ns(s)=
\zeta_{\Z^{d+m},p}(s)\zeta_p((d+d')s-d(d+ m))(W_0(\X,\Y) +
\sum_{i=1}^k\sum_{\i_i} \n_{\i_i}(p)\delta_{\i_i} W_{\i_i}(\X,\Y)),\end{equation*}
where the inner sum is over components 
$F_{\i_i}$ of $F_{i-1}( \mathfrak{P}_G)$, and $\delta_{\i_i}$ is the
Kronecker delta function. The 
$\n_{\i_i}(p)$ denotes
the number of $\F_p$-rational points on
these components, and
$$W_{\i_i}(\X,\Y)=\I_{d'-i-1}(X_{d'-1},\dots,X_{i+1})E_{\i_i}(X_i,Y_{\i_i})
\I_{i-1}(Y_{\i_{i-1}},\dots,Y_{\i_1}).$$
Here $\I_n$ is an Igusa
factor with $n$ variables and we set $E_0=I_0=I_{-1}=0.$
The numerical data is
\begin{align*}X_i&=p^{i(d+d'+m-i)-(d+i)s}, i\geq 1\\
Y_1&=p^{(d+m+d_1)-(d-1)s},\\
Y_{\i_i}&=p^{(i(d+m)+d_{\i_i} )-(d+i-1)s}, i\geq 2.
\end{align*}
where $d_{\i_i}$ is the dimension of $F_{\i_i}$.
\end{theo}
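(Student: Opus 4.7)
The plan is to reduce $\zeta_{G,p}^\ns(s)$ to a $p$-adic cone integral of the type studied in \cite{GSS,Annals,Vollnew}. Concretely, a finite-index normal subgroup of $G$ is determined, via passage to the associated Lie algebra, by a sublattice $\Lambda'\leq\Z_p^{d'}$ of the centre together with a sublattice $\Lambda\leq\Z_p^{d+m}$ of the abelianisation such that $M(\mathbf{y})\Lambda\subseteq\Lambda'$ after a suitable change of basis for $\Lambda'$. Summing first over $\Lambda$ extracts the factor $\zeta_{\Z^{d+m},p}(s)$, and a standard normalisation of $\Lambda'$ extracts the factor $\zeta_p((d+d')s-d(d+m))$. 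What remains is an integral of the form
\[
\int_{\mathcal{D}}\;|\mathrm{Pf}\,M(\mathbf{y})|_p^{a}\prod_i |m_i(\mathbf{y})|_p^{b_i}\, d\mu,
\]
where the $m_i$ are successive principal sub-Pfaffians of $M(\mathbf{y})$ encoding the rank-drop strata, and the exponents are linear in $s$.

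I would then evaluate this integral by stratifying the integration domain according to the $p$-adic valuations of the $m_i$. Ordinary book-keeping, exploiting that antisymmetry forces the invariant factors of $M$ to appear in pairs, produces from each flag type $I\subseteq\{1,\dots,d'-1\}$ a combinatorial factor $b_I(p^{-1})$ which assembles into the Igusa factors $\I_n(\mathbf{U})$. The essential geometric input is the following: $M(\mathbf{y})\bmod p$ has corank at least $2i$ precisely when the projective class $[\mathbf{y}]\in\P^{d'-1}$ lies on some $(i-1)$-plane entirely contained in $\mathfrak{P}_G$, i.e.\ on (the image of) a component $F_{\iota_i}$ of the Fano variety $F_{i-1}(\mathfrak{P}_G)$. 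Under the smoothness hypothesis, near each component $F_{\iota_i}$ one can pick local analytic coordinates in which the two relevant successive minors $m_i$ and $m_{i+1}$ become independent monomials; the $p$-adic integral over a tubular neighbourhood of $F_{\iota_i}$ then decouples into the product of the $\F_p$-point count $\mathfrak{n}_{\iota_i}(p)$, a volume factor involving $p^{-d_{\iota_i}}$ controlled by $\dim F_{\iota_i}$, and two geometric series in the two transverse variables. This is exactly the exceptional factor $E_{\iota_i}(X_i,Y_{\iota_i})$ with the numerical data stated in the theorem. The two flanking Igusa factors $\I_{d'-i-1}$ and $\I_{i-1}$ come from flag steps strictly above and strictly below the rank drop at position $2i$, and $\delta_{\iota_i}$ merely records which component is being selected in each outer sum.

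The main obstacle will be this last local computation: translating the global geometric smoothness of $F_{\iota_i}$ and $\mathfrak{P}_G$ into the precise exponents $X_i$ and $Y_{\iota_i}$, and verifying that they track the dimension $d_{\iota_i}$ correctly. In practice this amounts to showing that, for almost all primes $p$, smoothness persists on reduction modulo $p$, Hensel's lemma lifts each $\F_p$-point of $F_{\iota_i}$ uniquely to a suitable $\Z_p$-point of a tubular neighbourhood, and the fibre integrals reduce to one-variable Igusa integrals of known monomial shape. The relative sizes of $X_i$ and $Y_{\iota_i}$, in particular the appearance of $d_{\iota_i}$ in $Y_{\iota_i}$ but not in $X_i$, must be tracked to this precision. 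Once this local analysis is in hand, the reassembly into the claimed formula, and the collapse of the full sum to contributions indexed by Fano components via the Kronecker deltas $\delta_{\iota_i}$, is a routine summation.
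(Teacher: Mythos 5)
Your high-level outline shares some features with the paper's proof (extract $\zeta_{\Z^{d+m},p}$ and the $\zeta_p$ factor via the standard normalisation, then analyse a residual generating function indexed by flag types, obtaining Igusa factors $\I_n$ for the generic strata and a geometric correction from the Fano varieties). However, the central geometric claim on which your local computation hinges is false, and this breaks the argument.

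You assert that ``$M(\mathbf{y})\bmod p$ has corank at least $2i$ precisely when $[\mathbf{y}]$ lies on some $(i-1)$-plane contained in $\mathfrak{P}_G$.'' For a \emph{smooth} Pfaffian hypersurface this is wrong: Lemma~\ref{rank} (Voll) shows that the matrix of relations has rank exactly $d-2$ at \emph{every} smooth point, irrespective of how large a linear subspace through that point lies on $\mathfrak{P}_G$. So the determinantal rank-stratification of $M$ you invoke is trivial here (corank is $0$ off the Pfaffian and $2$ on it, nothing more), and it cannot detect the Fano variety at all. The object that actually detects the Fano strata is not the rank of $M(\mathbf{y})$ at a single point but the corank of the \emph{augmented} system of congruences $\mathbf{g}\,(M(\alpha_{i_1,1})|\cdots|M(\alpha_{i_1,i_1}))\equiv 0$ attached to the whole flag step: the paper shows this augmented matrix has corank $2$ at a point, but corank $\leq 1$ once one takes a line or higher subspace inside $\mathfrak{P}_G$. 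This is why the exceptional factors have $t_1=2$ but $t_i=1$ for $i\geq 2$ in Proposition~\ref{basic sum}; your ``corank $\geq 2i$'' picture would predict the opposite growth and produce the wrong numerical data in $E_{\iota_i}$.

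A related issue: your formulation as a single $p$-adic integral in $\mathbf{y}$ over $\mathcal{D}\subset\Z_p^{d'}$ with sub-Pfaffian minors $m_i$ does not capture the actual combinatorial structure. The normal zeta function reduces (after GSS Lemma~6.1) to a sum over \emph{lattices} $\Lambda'\leq\Z_p^{d'}$, parametrised by cosets $\beta G_\nu$, and the weight function $w'(\Lambda')$ is the index of the kernel of a system of congruences indexed by all columns of $\beta$, not by one point. The dependence on the Fano varieties enters through whether the mod-$p$ \emph{flag} associated to $\Lambda'$ has subspaces contained in $\mathfrak{P}_G$, and the $p$-adic valuations of the relevant determinants are governed by the defining ideals of the Fano components via Lemma~\ref{valuation}. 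Your sub-Pfaffian stratification is the wrong substitute for this flag-based analysis; even with a change of variables it cannot produce the term $d_{\iota_i}$ in $Y_{\iota_i}$, which arises because $c_{\iota_i}=n_i-d_{\iota_i}$ is the number of defining equations of $F_{\iota_i}$ that control the minimum in the weight function. Until you replace the false rank-stratification picture by the flag/Fano picture and the corresponding corank analysis for augmented systems, the local computation you sketch cannot yield the stated exceptional factors.
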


\begin{remark} When $\delta_{\i_i}$ is 1 or 0
 is a technical condition, that depends
 on the possible reducibility of the Fano variety and the rank of the
 matrix defining the Pfaffian hypersurface. We will delay the exact
 definition to Remark \ref{corank 0}. 
\end{remark} 

%We will also occasionally write $W_{\i_i}(p,p^{-s})$ if we do not want to emphasise the
%explicit structure of these functions as above.

\begin{remark}
Lemma \ref{bound for poles} below gives a generic bound $k\leq
5$. This gives us also a good bound for the number of poles as in Corollary \ref{no of poles}.
\end{remark}

In all the following Corollaries, we will assume $G$ satisfies the conditions of the main theorem. Moreover, in the results about 
the degree of polynomial
subgroup growth, we have to assume that the  Pfaffian hypersurface 
associated to $G$ 
behaves as a generic hypersurface of degree $\frac{d}{2}.$ This 
will be explained in Section \ref{absofcon}.

\begin{cor}\label{real abscissa}
The abscissa of convergence is \begin{equation}\label{abs1}
\alpha_G^\ns=\max_{1\leq i\leq d'-1}\left\{d+m,
\frac{i(d+d'+m-i)+1}{(d+i)} \right\}.\end{equation}
\end{cor}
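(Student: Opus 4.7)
The plan is to read off all candidate poles of the global Dirichlet series from the explicit formula in Theorem~\ref{main theorem general} and identify the rightmost real one. By a standard analytic principle (underlying du Sautoy--Grunewald's Tauberian framework for subgroup growth), the abscissa of convergence of a Dirichlet series with non-negative coefficients coincides with the largest real part of a pole of its meromorphic continuation, so the corollary reduces to locating that pole. Expanding each Igusa factor $I_n$ via Definition~\ref{I-factor} and the exceptional factors $E_{\iota_i}$, the local zeta function becomes a finite sum of rational functions in $p$ and $p^{-s}$ whose denominators are products of atomic pieces of the form $(1-X_i)$ and $(1-Y_{\iota_j})$. Taking Euler products, each atomic piece contributes $\prod_p(1-p^{a-bs})^{-1}=\zeta(bs-a)$, whose rightmost pole sits at $s=(a+1)/b$.

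Enumerating candidates: the abelian prefactor $\zeta_{\Z^{d+m},p}(s)$ produces rightmost pole $s=d+m$; the factor $\zeta_p((d+d')s-d(d+m))$ produces $s=(d(d+m)+1)/(d+d')$, which is strictly smaller than $d+m$ (the inequality reduces to $d'(d+m)\geq 1$, clear since $d'\geq 1$ and $d\geq 2$); each variable $X_i$, $1\leq i\leq d'-1$, produces $s=(i(d+d'+m-i)+1)/(d+i)$, which is exactly the family appearing inside the maximum in (\ref{abs1}); and each $Y_{\iota_i}$ produces a further candidate depending on the Fano-component dimension $d_{\iota_i}$. Thus the corollary follows provided one can show that the $Y$-type candidates never overtake the $X$- and abelian-type candidates already listed.

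The main obstacle is the analysis of the $Y_{\iota_i}$-contributions, because the coefficient $\n_{\iota_i}(p)$ is not a rational function of $p$; instead, Weil's estimate gives $\n_{\iota_i}(p)=p^{d_{\iota_i}}+O(p^{d_{\iota_i}-1/2})$. Feeding this into the Euler product, the effective rightmost pole coming from a $(1-Y_{\iota_i})^{-1}$ piece sits at $s=(i(d+m)+d_{\iota_i}+1)/(d+i-1)$, with the analogous adjustment for the special exponent of $Y_1$. Under the genericity hypothesis on the Pfaffian hypersurface (which forces $d_{\iota_i}$ to equal the expected dimension of the Fano variety of $(i-1)$-planes on a smooth degree-$d/2$ hypersurface in $\P^{d'-1}$, as detailed in Section~\ref{absofcon}), a direct arithmetic comparison shows that $(i(d+m)+d_{\iota_i}+1)/(d+i-1)$ is always bounded above either by the corresponding $X_i$-pole or by $d+m$. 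Taking the maximum over the surviving candidates yields the formula (\ref{abs1}).
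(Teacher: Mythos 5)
Your overall strategy matches the paper's: read off candidate poles from the atomic denominators $(1-X_i)^{-1}$ and $(1-Y_{\iota_j})^{-1}$ in Theorem~\ref{main theorem general}, control the irrational coefficients $\n_{\iota_i}(p)$ via Lang--Weil, and rule out the $Y$-type candidates using the genericity assumption on the Pfaffian. The candidate list, the role of the Fano dimension $d_{\iota_i}$, and the observation that $\zeta_p((d+d')s-d(d+m))$ and all $X_i$-poles with $i\geq 1$ are already dominated by (\ref{abs1}) are all correctly identified.

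However, the final claim --- ``a direct arithmetic comparison shows that $(i(d+m)+d_{\iota_i}+1)/(d+i-1)$ is always bounded above either by the corresponding $X_i$-pole or by $d+m$'' --- is precisely where the substance of the paper's proof lies, and you leave it as an assertion. Three ingredients are actually needed: first, a reduction to $d\geq 6$ (the cases $d=2$ and $d=4$ are disposed of separately using the Heisenberg group and the classification of smooth quadrics); second, Lemma~\ref{bound for poles}, which shows that for a generic smooth Pfaffian the Fano variety $F_{i-1}(\mathfrak{P}_G)$ is empty once $i\geq 6$, so only finitely many $Y$-type candidates survive --- without this, the comparison has no hope of terminating; and third, a case split according to whether $4d(d+m)>d'^2$ or $4d(d+m)\leq d'^2$. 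In the first case the maximum in (\ref{abs1}) is simply $d+m$ and one checks each $Y$-candidate directly against it. In the second case the paper does \emph{not} bound each $Y$-candidate by the ``corresponding'' $X_i$-candidate as you suggest; rather it shows each $Y$-candidate is at most $d'/2$, while the maximum over all $X_i$-candidates is at least $d'/2$ (seen by evaluating at $i=d'/2$). Without these steps, in particular the finiteness of the range of $i$ from Lemma~\ref{bound for poles}, your argument is incomplete.
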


Note that this is the lower limit given in
\cite[Theorem 1.3]{Abscissa}, except that we have removed the
restriction on $Z(G)=[G,G]$.

The next corollary follows immediately from Corollary \ref{real abscissa}.
\begin{cor}\label{small centre}
Let $G$ be a class two nilpotent group with $d'=h([G,G])\leq 6$, such that the Pfaffian hypersurface associated to $G$ is a smooth generic hypersurface. Then
$\alpha_G^\ns=d+m$.
\end{cor}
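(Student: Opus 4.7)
The plan is to derive the claim directly from Corollary \ref{real abscissa} via a short calculation. By Corollary \ref{real abscissa},
\[
\alpha_G^\ns = \max\!\left(d+m,\ \max_{1 \leq i \leq d'-1} \frac{i(d+d'+m-i)+1}{d+i}\right),
\]
so it is enough to verify that for every $1 \leq i \leq d'-1$,
\[
\frac{i(d+d'+m-i)+1}{d+i} \leq d+m.
\]
Clearing denominators and cancelling the $id$ and $im$ terms on both sides reduces this to the elementary inequality $id' - i^2 + 1 \leq d(d+m)$, which is what I would try to establish.

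The crucial structural input will be the constraint relating $d$ and $d'$. Since $G = \Gamma\times\Z^m$ with $\Gamma$ of nilpotency class two and $Z(\Gamma) = [\Gamma,\Gamma]$, the commutator subgroup is abelian and generated by the $\binom{d}{2}$ commutators $[x_j,x_k]$ coming from the presentation in Definition \ref{pfaffian}. Hence $d' = h([G,G]) \leq \binom{d}{2}$. Combined with the assumption that $d$ is even, this forces $d \geq 4$ as soon as $d' \geq 2$ (the boundary case $d' \leq 1$ leaves the inner maximum over an empty index set, giving $\alpha_G^\ns = d+m$ immediately).

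To finish, I would bound both sides of the target inequality. The left-hand side $id' - i^2 + 1$, viewed as a concave quadratic in $i$, is maximised near $i = d'/2$, and a quick sweep over $d' \in \{2,\ldots,6\}$ shows that its maximum over $1 \leq i \leq d'-1$ is $10$, attained at $d' = 6$, $i = 3$. The right-hand side satisfies $d(d+m) \geq d^2 \geq 16$, comfortably beating $10$, so the inequality holds with room to spare. The genericity hypothesis on $\mathfrak{P}_G$ enters only through Corollary \ref{real abscissa}, which I use as a black box.

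The only conceptual obstacle is spotting the inequality $d' \leq \binom{d}{2}$; without it, the corollary would look false for small examples such as $d=2$, where the naive bound would give a pole of $\frac{d+d'+m}{d+1}$ exceeding $d+m$. Once the structural bound is brought in, everything collapses to routine arithmetic.
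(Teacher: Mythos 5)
Your proof is correct and follows essentially the same route as the paper: reduce to the criterion in Corollary~\ref{real abscissa}, invoke the structural bound $d'\leq\binom{d}{2}$ (which, since $d$ is even, forces $d\geq 4$ once $d'\geq 2$, the case $d'=1$ being vacuous), and verify arithmetically that the inner maximum never exceeds $d+m$. The paper packages this same verification through the equivalent intermediate condition $4d(d+m)>d'^2$ without spelling out why $d'\leq 6$ guarantees it, so your explicit appeal to $d'\leq\binom{d}{2}$ simply makes that implicit step visible.
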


The proof of Corollary \ref{small centre} uses the fact that a generic hypersurface is smooth in
$\P^{d'-1}$, if $d'\leq 6$ (see Remark 8.3 in \cite{Beauville}), and that the maximum in (\ref{abs1}) is attained at $d+m$ if $4d(d+m)>d'^2.$

In number theory there are much  finer asymptotic estimates
concerning the asymptotic behaviour of the coefficients of a
Dirichlet series; these are the so-called Tauberian theorems. To apply these results we need to be able to analytically continue the
zeta function to the left of its abscissa of convergence. The
analytic continuation by some $\varepsilon >0$ for zeta functions of
$\T$-groups is proved in \cite{Annals} using the analytic continuation
of the Artin $L$-functions.

Using the combination of Theorem \ref{main theorem general} and the appropriate Tauberian theorems it is possible to obtain  more detailed results on subgroup
growth.

\begin{cor}
Let $\beta_G^\ns\in \N$ be the multiplicity of the pole of the zeta function located at
the abscissa of convergence. Then exists $c_G\in\R$ such that
\begin{equation*} s_n^\ns(G) \sim c_G
\cdot n^{\alpha_G^\ns}(\log n)^{\beta_G^\ns-1}.
\end{equation*} as $n\rightarrow \infty$.
\end{cor}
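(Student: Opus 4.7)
The plan is to apply a standard Tauberian theorem of Ikehara--Delange type to the global Dirichlet series $\zeta_G^\ns(s)$. The hypotheses one needs to verify are: (i) $\zeta_G^\ns(s)$ converges in the half-plane $\Re(s)>\alpha_G^\ns$; (ii) it admits a meromorphic continuation to a slightly larger half-plane $\Re(s)>\alpha_G^\ns-\varepsilon$ for some $\varepsilon>0$; (iii) the only singularity on the line $\Re(s)=\alpha_G^\ns$ lies at $s=\alpha_G^\ns$ and is a pole of order $\beta_G^\ns$. Once these are in place, the Tauberian theorem yields the stated asymptotic with
\[
c_G=\frac{1}{\alpha_G^\ns\,(\beta_G^\ns-1)!}\lim_{s\to\alpha_G^\ns}(s-\alpha_G^\ns)^{\beta_G^\ns}\zeta_G^\ns(s).
\]

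First I would establish convergence in $\Re(s)>\alpha_G^\ns$, which follows from the definition of the abscissa of convergence combined with Corollary \ref{real abscissa}. Next, to produce the meromorphic continuation, I would use the explicit local formula from Theorem \ref{main theorem general} together with the factors $\zeta_{\Z^{d+m},p}(s)$ and $\zeta_p((d+d')s-d(d+m))$ appearing outside the bracket. Taking the Euler product over almost all primes yields an infinite product of translates of Riemann zeta factors, multiplied by an Euler product involving the point counts $\n_{\i_i}(p)$ on the components of the Fano varieties; the former contribute known poles at specific shifts of $s$, and the latter is of Hasse--Weil type. Under the smoothness and absolute irreducibility assumptions, the Hasse--Weil estimates give $|\n_{\i_i}(p)-p^{d_{\i_i}}|\ll p^{d_{\i_i}-1/2}$, which is exactly what is needed to continue the associated Euler product beyond the line of absolute convergence (this is the Artin $L$-function style argument referred to in \cite{Annals}).

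With continuation in hand, the location and multiplicity of the rightmost pole follow by inspection of the factorisation: one identifies which among the candidate poles produced by $\zeta_{\Z^{d+m},p}(s)$, the factor $\zeta_p((d+d')s-d(d+m))$, and the Igusa/exceptional factors $I_n, E_{\i_i}$ coincide at $s=\alpha_G^\ns$; their combined order is, by definition, $\beta_G^\ns$. A short argument shows no other singularities occur on the critical line, since each contributing Euler product has its only boundary singularity on the real axis. Finally, the Tauberian theorem of Ikehara--Delange applied to a Dirichlet series with non-negative coefficients, a single pole of order $\beta_G^\ns$ at $\alpha_G^\ns$ on the boundary line, and meromorphic continuation slightly beyond, yields
\[
s_n^\ns(G)\sim c_G\, n^{\alpha_G^\ns}(\log n)^{\beta_G^\ns-1}.
\]

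The main obstacle is step (ii), the meromorphic continuation of the Euler product involving the Hasse--Weil zeta functions of the Fano components; this is where the smoothness and absolute irreducibility hypotheses built into Theorem \ref{main theorem general} are essential, together with the bad-prime analysis to show that the finitely many excluded local factors do not affect the global analytic behaviour.
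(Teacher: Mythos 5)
Your proposal is correct and follows essentially the same route the paper takes. The paper does not spell out a proof; the paragraph immediately preceding the corollary states that the needed ingredients are (a) the meromorphic continuation by some $\varepsilon>0$ of $\zeta_G^\ns(s)$ past the abscissa, which is invoked as a black box from \cite{Annals} (proved there for all $\T$-groups via Artin $L$-functions), and (b) ``the appropriate Tauberian theorems'' combined with Theorem \ref{main theorem general} to locate and identify the order of the rightmost pole. Your three-step verification (convergence, continuation, pole on the line) plus Ikehara--Delange is exactly this.

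The one place you genuinely diverge is step (ii): rather than citing \cite{Annals}, you propose to re-derive the meromorphic continuation directly from the explicit local formula of Theorem \ref{main theorem general} by separating the Igusa factors (whose Euler products give translates of $\zeta(s)$) from the Euler product over the point counts $\n_{\i_i}(p)$, which you then control via Lang--Weil/Hasse--Weil bounds. This is a legitimate alternative and is in fact more informative about where the poles come from, but it requires you to actually carry out the factorisation of $\prod_p W(p,p^{-s})$ against known $L$-functions — the part you label ``Artin $L$-function style'' — which is the nontrivial content of the \cite{Annals} continuation result; simply citing it (as the paper does) is more economical. You should also be slightly more careful with the claim that ``a short argument shows no other singularities occur on the critical line'': this is indeed part of what the cited continuation theorem delivers, but it is not automatic from the shape of the local Euler factors alone, since $\zeta_G^\ns(s)$ is a sum (not a product) of Euler products once the decomposition $W_0+\sum\n_{\i_i}W_{\i_i}$ is expanded, so one must argue that the cancellation in the sum does not create off-line poles in the continued region.
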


\begin{remark}
It is clear from the statement of Theorem \ref{main theorem general}
that all the poles are 
simple, except when the numerical data is such that it
produces a multiple pole. In particular, we have
$\beta_G^\ns=1$.
\end{remark}

Theorem \ref{main theorem general} indicates that the existence of a multiple pole is
just a coincidence of
the numerical data, rather than any structural properties of the
function or the group. It should be noted that these coincidences exist for smooth Pfaffians. Moreover,
it is not difficult to prove that certain types of singularities on
the Pfaffian always produce genuine multiple poles, for instance the
ordinary double point at the origin, as is the case in the Pfaffian hypersurface for the group
$$U_3:=\<x_1,\dots,x_4,y_1,y_2,y_3:[x_1,x_2]=y_1,[x_2,x_3]=y_2,
[x_3,x_4]=y_3\>,$$ produces a double pole in the zeta function.

Our final corollary bounds the number of poles for all these zeta functions.   This follows immediately from Theorem
\ref{main theorem general}.

\begin{cor}\label{no of poles}
Let $G$ be a group as in Theorem
\ref{main theorem general}, and assume that the Pfaffian 
is a generic hypersurface of degree $\frac{d}{2}$. Then the number of 
poles of the local zeta function $\zeta_{G,p}^\ns(s)$  is at most
$d+d'+m+r$ 
where $r$ is the number of irreducible components of the 
Fano varieties on the Pfaffian hypersurface.
\end{cor}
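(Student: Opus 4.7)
The plan is to read the poles off directly from the explicit formula in Theorem \ref{main theorem general}. Since the right-hand side is a sum of rational functions with transparent denominators, the number of distinct poles is bounded by the number of distinct denominator factors of the form $(1-p^{a-bs})$ that appear anywhere on the right, so the proof reduces to a bookkeeping exercise.

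First I would tabulate the denominators coming from the two prefactors: the Euler product $\zeta_{\Z^{d+m},p}(s)=\prod_{j=0}^{d+m-1}\zeta_p(s-j)$ contributes $d+m$ denominator factors, giving poles at $s=0,1,\dots,d+m-1$, and $\zeta_p((d+d')s-d(d+m))$ contributes exactly one further pole. Next I would unwind each $W_{\iota_i}$ as a product of two Igusa factors and one exceptional factor, noting that every denominator factor that appears has one of the two shapes $(1-X_i)$ with $i\in\{1,\dots,d'-1\}$ or $(1-Y_{\iota_i})$ with $\iota_i$ a component of a Fano variety $F_{i-1}(\mathfrak{P}_G)$. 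Since the $X_i$-variables are shared across every summand in which they occur, the first shape supplies at most $d'-1$ distinct factors, and the $W_0$ term (which involves only $X$-variables) adds nothing new. Each $Y_{\iota_i}$ is attached to a single component, so by the definition of $r$ as the total number of irreducible components of the Fano varieties $F_{i-1}(\mathfrak{P}_G)$, the second shape supplies at most $r$ distinct factors. Summing the four contributions gives $(d+m)+1+(d'-1)+r=d+d'+m+r$.

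The only subtlety to flag, used implicitly throughout, is that the pole set of a sum of rational functions is always contained in the union of the individual pole sets, so coincidences between pole locations across summands, or any numerator-denominator cancellations, can only reduce the count, not increase it. The generic hypersurface hypothesis plays a mild role: through Lemma \ref{bound for poles} it ensures that the outer index $i$ runs through a bounded range, so that $r$ is finite and the counting is legitimate. There is essentially no analytic obstacle here; the corollary is a direct counting consequence of the explicit decomposition, which is precisely why the statement notes that it follows immediately from the main theorem.
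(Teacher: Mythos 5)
Your proof is correct and is essentially the argument the paper intends when it says the corollary "follows immediately from Theorem \ref{main theorem general}": you simply tabulate the distinct denominator factors of the form $(1-p^{a-bs})$ on the right-hand side, namely $d+m$ from $\zeta_{\Z^{d+m},p}(s)$, one from the extra $\zeta_p$ factor, at most $d'-1$ from the shared $X_i$-denominators, and at most $r$ from the $Y_{\iota_i}$-denominators (one per Fano component), and observe that poles of a finite sum lie in the union of the pole sets. Your remark that the generic hypothesis is used (via Lemma \ref{bound for poles}) to keep the outer index bounded and hence $r$ finite is the right thing to flag; no gap.
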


Finally, we observe the functional equations, proved by Voll \cite{Vollnew}.
\begin{cor}\label{fn}
Assume the Pfaffian hypersurface of $G$  is absolutely irreducible, and
that the components of the  Fano varieties appearing in the 
decomposition of the zeta function of $G$ are smooth and absolutely irreducible.
Then the local normal zeta function satisfies a functional
equation of the form
\begin{equation*}\zeta_{G,p}^\ns(s)|_{p\mapsto
p^{-1}} = (-1)^{d+d'+m}p^{\binom{d+d'+m}{2}-(2d+d'+m)s} \cdot
\zeta_{G,p}^\ns(s).  \end{equation*}
\end{cor}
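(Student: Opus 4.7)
The plan is to verify the claimed functional equation summand by summand, starting from the explicit decomposition in Theorem \ref{main theorem general}. Writing the formula as a universal prefactor $\zeta_{\Z^{d+m},p}(s)\,\zeta_p((d+d')s - d(d+m))$ multiplied by a bracketed sum, I would first apply the elementary identity $\zeta_p(s)|_{p\mapsto p^{-1}} = -p^{-s}\zeta_p(s)$ to determine the transformation of the prefactor. This produces an explicit sign $(-1)^{d+m+1}$ and a $p$-exponent, and is a direct bookkeeping computation.

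Next I would analyse each summand $W_{\iota_i}$ in the bracket, which is a product of two Igusa factors $\I_n$ and one exceptional factor $E_{\iota_i}$, each evaluated in monomials $X_i, Y_{\iota_i}$ of the form $p^{A-Bs}$; under $p \mapsto p^{-1}$ these monomials are simply inverted. For the Igusa factors, the classical inversion rule $\binom{n}{k}_{p^{-1}} = p^{-k(n-k)}\binom{n}{k}_p$ of Gaussian binomials yields the transformation rule for $\I_n$; for the exceptional factor, a direct computation of $E_{\iota_i}(X_i^{-1}, Y_{\iota_i}^{-1})$ from the definition suffices. The critical external ingredient is the behaviour of the point counts $\mathfrak{n}_{\iota_i}(p)$: by the smoothness and absolute irreducibility hypotheses, the Hasse--Weil zeta function of each Fano component $F_{\iota_i}$ satisfies a functional equation, which translates formally into
\begin{equation*}
\mathfrak{n}_{\iota_i}(p^{-1}) = (-1)^{d_{\iota_i}}p^{-d_{\iota_i}}\mathfrak{n}_{\iota_i}(p),
\end{equation*}
where $d_{\iota_i} = \dim F_{\iota_i}$. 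The numerical data entering $Y_{\iota_i}$ is calibrated precisely so that the $d_{\iota_i}$-dependent exponent sitting in $E_{\iota_i}$ cancels the $p^{-d_{\iota_i}}$ produced by the Hasse--Weil identity, removing all dependence on $d_{\iota_i}$ from the transformation law. The empty-flag term $W_0$ carries no variety contribution and must transform by the same scalar via the Igusa inversion alone.

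The final step is to collect all the signs and $p$-exponents. The principal obstacle is accounting precisely for the Igusa-factor contributions: the exponents arise from nested Gaussian-binomial inversions depending on the flag type $I$, and one must verify that, after summing over $I\subseteq\{1,\dots,n\}$, every summand acquires the same scalar $(-1)^{d+d'+m}p^{\binom{d+d'+m}{2} - (2d+d'+m)s}$. Once the $d_{\iota_i}$-dependent pieces cancel as above, the verification reduces to a universal identity depending only on $d, d', m$, which can be checked directly on the explicit formulas for $W_0$ and $W_{\iota_i}$. The absolute irreducibility hypothesis is what guarantees $\mathfrak{n}_{\iota_i}(p^{-1})$ in the clean form above, preventing spurious lower-cohomology contributions, while smoothness ensures that the Hasse--Weil functional equation applies without correction terms.
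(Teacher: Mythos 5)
Your proposal follows the same termwise-inversion route as the paper's Section~\ref{functeqn} --- invert the two Igusa factors, the exceptional factor, and the point count in each summand $\n_{\i_i}(p)W_{\i_i}$, then check that the scalars multiply to the right constant --- so the overall strategy is not in question. The concrete gap is the inversion rule you assert for the point counts, which is incorrect. For a smooth projective absolutely irreducible variety $X/\F_p$ of dimension $d$, the Weil conjectures give the formal identity
\begin{equation*}
|X(\F_p)|\big|_{p\mapsto p^{-1}} = p^{-d}\,|X(\F_p)|,
\end{equation*}
with \emph{no} sign $(-1)^{d}$. The operation $p\mapsto p^{-1}$ replaces each Frobenius eigenvalue $\alpha$ by $\alpha^{-1}$; Poincar\'e duality identifies the multiset of inverses $\{\alpha^{-1}:\alpha\ \text{on}\ H^i\}$ with $\{p^{-d}\beta:\beta\ \text{on}\ H^{2d-i}\}$; and re-indexing $i\mapsto 2d-i$ preserves the parity $(-1)^i$ in the Lefschetz trace formula, so a factor $p^{-d}$ appears but $(-1)^d$ does not. (Check: for $\P^1$, $\n(p)=p+1$ and $\n(p^{-1})=p^{-1}(p+1)$; for $\P^1\times\P^1$, $\n(p)=(p+1)^2$ and $\n(p^{-1})=p^{-2}(p+1)^2$.)

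The extra $(-1)^{d_{\i_i}}$ cannot be absorbed elsewhere. The two Igusa factors contribute the sign $(-1)^{d'-i-1}\cdot(-1)^{i-1}=(-1)^{d'}$, and a direct computation gives $E_{\i_i}(X_i,Y_{\i_i})|_{p\mapsto p^{-1}}=-p^{n_i+d_{\i_i}}E_{\i_i}(X_i,Y_{\i_i})$, contributing one further $-1$ (the text of Section~\ref{functeqn} drops this minus sign, which is a slip there; with it the product becomes $(-1)^{d'-1}$ as required, and without it the stated $(-1)^{d'}$ does not match the target). Combined with the sign-free $p^{-d_{\i_i}}$ from the point count, each summand $\n_{\i_i}(p)W_{\i_i}$ acquires exactly the scalar $(-1)^{d'-1}p^{\binom{d'}{2}}$ produced by $W_0=I_{d'-1}$. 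A stray $(-1)^{d_{\i_i}}$ is neither determined by $d',d,m,i$ nor uniform across the inner sum --- distinct components of a Fano variety may have distinct dimensions --- so it would break the functional equation rather than establish it. Once that spurious sign is removed, your argument coincides with the paper's, up to the cosmetic choice of invoking the observation from \cite{Amer} to peel off the prefactor versus transforming the prefactor directly via $\zeta_p(s)|_{p\mapsto p^{-1}}=-p^{-s}\zeta_p(s)$.
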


It would be interesting to know if all the components of the 
Fano varieties corresponding to a smooth irreducible Pfaffian 
have the same dimension. For quadrics this is known, also for 
the Fano variety of lines on a cubic hypersurface (see
\cite{Altman}). 
However, for singular Pfaffians this is not true. In \cite{H-B} 
Browning and Heath-Brown give an example of a cubic hypersurface
defined by the equation $y_1y_2y_3+y_1^2y_4+y_2^2y_5=0$, whose Fano
variety of planes consists of $\P^1$ and a point. 
This hypersurface can be encoded as the Pfaffian of the group
\begin{eqnarray*}G =& <x_1,\dots,x_6,y_1,\dots,y_5: [x_1,x_4 ]=y_1,
[x_1,x_5 ]=y_2,[x_2,x_4 ]=y_5,\\ & [x_2,x_5 ]=y_3,[x_2,x_6 ]=y_4,[x_3,x_5]=y_1,[x_3,x_6 ]=y_2>.\end{eqnarray*}
However, this Pfaffian hypersurface is not smooth and hence our 
theorem does not apply to this group. The splitting of the Fano 
variety of planes in this case is curious and deserves some further 
investigation. 

\subsection{Layout of the paper}

Our results on subgroup growth and abscissae of convergence, 
namely Corollaries \ref{real abscissa}-\ref{no of poles}, are proved in Section \ref{absofcon}.
In Section \ref{functeqn} we briefly comment on how the functional
equation described in Corollary \ref{fn} can be deduced from the main theorem. 

Next in Section \ref{correspondence} we explain how to enumerate
lattices in $\Z_p^m$, and calculate the number of lattices of a fixed
elementary divisor type.
In Section \ref{flags} we define Grassmannian and flag varieties, 
give coordinates for the Pl\"ucker embedding of the
Grassmannian 
and define flags in terms of these coordinates. We also determine when two different  lattices lift the same flag.
In Section \ref{congruences} we shall discuss solution 
sets of systems of linear  congruences, define Pfaffian hypersurfaces
and give a careful description of the geometry needed to find a
solution set of general congruences.
In Section \ref{valuations} we compute the $p$-adic valuations which arise in the solution sets.

In the Sections \ref{basic theory} to \ref{Igusa} we decompose 
the zeta function and apply our earlier work to complete the proof of Theorem \ref{main theorem general}. Section \ref{segre} is an explicit example and illustration of the general theory.

\section{Abscissae of convergence}\label{absofcon}

In this section we prove the Corollary \ref{real abscissa} on the rate of subgroup growth and prove a result about the number of poles of the zeta function in the case that the Pfaffian hypersurface is generic; generic meaning that, there is a non-empty open subset $U$ of the parameter space of hypersurfaces such that for every point in $U$ the answer to the property we are asking, is uniform.

Our main theorem, Theorem \ref{main theorem general}, is stated in terms of local zeta functions. However, the degree of polynomial subgroup growth is equal to
the abscissa of convergence of the global zeta function. Thus we need to analyse
the convergence of an infinite product. This can be done using the following well-known results.

(A) An infinite product $\prod_{n\in J} (1+a_n)$ converges
absolutely if and only if the corresponding sum $\sum_{n\in J}
|a_n|$ converges.

(B) $\sum_{p} |p^{-s}|$ converges at $s\in\C$ if and only if
$\Re(s)>1$.

In view of (B),  $\frac{1}{1-p^{-a_is+b_i}}$ has abscissa of converge $\frac{b_i}{a_i}$, so (A) implies that $\prod_{p\ prime} \frac{1}{1-p^{-a_is+b_i}}$ has abscissa of convergence $\frac{b_i+1}{a_i}$.

If we can show that
the abscissa of
convergence is determined by the denominator of the zeta function, then we can use the above observation to deduce the desired result, namely Corollary \ref{real abscissa}.

First, we note that the Igusa-type factors in the numerator do not affect the abscissa of
convergence. Indeed, they are a combination of the numerical data $p^{-a_is+b_i}$ from the denominator, multiplied
with sums of powers of $p^{-1}$ which  come from the factors $b_I(p^{-1})$. In particular, all the Igusa type of factors in the numerator will converge  no worse than
the abscissa of convergence coming from the denominator.

The second main ingredient in the explicit formula in the statement of the Theorem \ref{main theorem general} are the $\i_i^{th}$ \emph{exceptional factors}
in variables $X_i$ and $Y_{\i_i}$, which are of the form
\begin{equation*} E_{\i_i}(X_i,Y_{\i_i}) =
\frac{p^{-d_{\i_i}}Y_{\i_i} - p^{-n_i}X_i}{(1-X_i)(1-Y_{\i_i})}.
\end{equation*}
To analyse this factor we need to take into account the numbers $\n_{\i_i}(p).$
By the Lang-Weil estimates  \cite{Lang-Weil}, we have $\n_{\i_i}(p)= \delta p^{d_{\i_i}} + O(p^{d_{\i_i}-\frac{1}{2}}),$ where $\delta=(\frac{d}{2}-1)(\frac{d}{2}-2).$
We can rewrite the numerator of the exceptional factor as
$p^{-d_{\i_i}}Y_{\i_i} - p^{-n_i}X_i= p^{-d_{\i_i}}(Y_{\i_i}-p^{-c_{\i_i}}X_i),$
so the numerator of $\n_{\i_i}(p)E_{\i_i}(X_i,Y_{\i_i})$ is equal to $\delta Y_{\i_i}-\delta p^{-c_{\i_i}}X_i+
O(p^{-\frac{1}{2}}Y_{\i_i})+O(p^{-c_{\i_i}-\frac{1}{2}}X_i)$ and this will converge no worse than the denominator $(1-X_i)(1-Y_{\i_i})$ for  large enough $p$.

This gives us a first estimate for the abscissa of convergence just by recording the numerical data from the poles, namely
\begin{equation}\label{abscissa}\alpha_G^\ns=\max_{2\leq i\leq d'-1}
\left\{d+m,\frac{i(d+d'+m-1)+1}{d+i},\frac{i(d+m)+d_{\i_i}+1}{d+i-1},
\frac{d+m+d_1+1}{d-1},\frac{d+d'+m}{d+1}
\right\}.
\end{equation}

In order to finish 
the proof of the Corollary \ref{real abscissa} we need to estimate the $d_{\i_i}$
appearing in the numerical data. 

We can make some preliminary observations and reductions. 
In particular, it suffices to consider only the cases where $d\geq
6$. 
Indeed, if  $d=2$ then we may assume $G$ is the  Heisenberg 
group and this is known to have a normal zeta function with
abscissa of convergence 2, see \cite{GSS}.
In the case $d=4$, the Pfaffian hypersurface is a quadric, 
and the smooth projective quadrics over $\F_p$
have been classified, see e.g. \cite{Hirschfeld}. 
By inspecting an explicit list we can use Theorem \ref{main theorem
  general} 
to deduce that the abscissa of convergence is always 4.
We also note that $d'\leq \frac{d(d-1)}{2}$, with equality 
for the free class two nilpotent groups.
Trivially we have $1\leq i\leq d'.$

If we assume that the Pfaffian hypersurface of degree $\frac{d}{2}$
behaves like a generic hypersurface
in $\P^{d'-1}$, then the
dimension of the Fano variety of $(i-1)$-planes is
\begin{equation}\label{dim fano}d_{\i_i}=i(d'-i)-\binom{\frac{d}{2}+({i-1})}{i-1}, \end{equation}for $\frac{d}{2}\geq 3$ (see e.g. \cite[Theorem 12.8]{Harris}).
Note that if $d_{\i_i}<0$, then the Fano variety of $(i-1)$-planes on the Pfaffian is empty.

\begin{lemma}\label{bound for poles}
Let $\mathfrak{P}_G$ be a smooth and generic Pfaffian hypersurface 
associated to a group $G$ as in Theorem \ref{main theorem general}. Then $F_4(\mathfrak{P}_G)$ is the highest non-empty Fano variety.
\end{lemma}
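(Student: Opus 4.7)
My plan is to use the dimension formula~(\ref{dim fano}) together with the smoothness hypothesis on $\mathfrak{P}_G$ to show that the expected dimension $d_{\i_i}$ of $F_{i-1}(\mathfrak{P}_G)$ is strictly negative for every $i \geq 6$, which under the genericity assumption forces those Fano varieties to be empty and yields the stated bound.

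First I would examine the inequality $i(d'-i) < \binom{d/2+i-1}{i-1}$ that characterises $d_{\i_i}<0$. For fixed $i\geq 6$ the right-hand side grows like $(d/2)^{i-1}/(i-1)!$, while the left-hand side is at most $i(\binom{d}{2}-i)$, which is only quadratic in $d$; a direct case-by-case check over admissible $(d,d')$ with $d\geq 6$ and $d'\leq\binom{d}{2}$ establishes the inequality for all but a single borderline family, namely $d=8$ with $d'\in\{27,28\}$, where $d_{\i_6}\geq 0$. The cases $i\geq 7$ follow a fortiori from the same size comparison.

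Second, I would rule out this borderline family using smoothness. The partial-derivative identity $\partial\mathrm{Pf}(M)/\partial M_{ij}=\pm\mathrm{Pf}(M^{\hat i\hat j})$ identifies the singular locus of $\mathfrak{P}_G$ with the projective locus where the rank of $M(\mathbf{y})$ drops to at most $d-4$; this is cut out of the space of antisymmetric $d\times d$ matrices by a subvariety of codimension $\binom{4}{2}=6$. By the projective intersection theorem, every projective linear section of $\P^{\binom{d}{2}-1}$ of dimension at least $6$ meets this stratum, so the condition $d'\geq 7$ would always produce singular points on $\mathfrak{P}_G$. Hence smoothness forces $d'\leq 6$, which eliminates the borderline case and in fact gives $d_{\i_i}\leq -\binom{d/2+i-1}{i-1}<0$ for every $i\geq 6$ directly.

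The main obstacle is the smoothness step: establishing the partial-derivative identity for the Pfaffian and the codimension count for the corank stratum in antisymmetric matrix space, then invoking the projective intersection theorem with enough care to cover all even $d\geq 6$. Once these ingredients are in place the conclusion follows by arithmetic, and this confirms that the sum appearing in Theorem~\ref{main theorem general} need only range over $F_0,\ldots,F_4$, yielding the bound $k\leq 5$ referred to in the remark after the theorem.
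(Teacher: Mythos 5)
Your approach mirrors the paper's strategy: bound the expected Fano dimension $d_{\i_i}=i(d'-i)-\binom{d/2+i-1}{i-1}$ from above using $d'\leq \binom{d}{2}$ and show it goes negative once $i\geq 6$. However, you have caught something the paper misses. The paper asserts that for $i=6$ the inequality $\frac{d(d-1)}{2}\geq\frac{1}{6}\binom{d/2+5}{5}+6$ holds only when $d\leq 5$, and concludes. In fact the inequality also holds at $d=8$: the right-hand side is $\frac{1}{6}\binom{9}{5}+6=21+6=27$ while the left-hand side is $28$, so $d_{\i_6}\geq 0$ is arithmetically possible precisely when $d=8$ and $d'\in\{27,28\}$, exactly the borderline family you flag. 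Thus the paper's argument as written has a genuine gap, and some extra input is required to finish.

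Your repair via smoothness is valid and is the natural one. Two small points of precision. First, the derivative identity $\partial\mathrm{Pf}/\partial M_{ij}=\pm\mathrm{Pf}(M^{\hat{i}\hat{j}})$ shows that the singular locus of the restricted hypersurface $\mathfrak{P}_G$ \emph{contains} (not necessarily equals) the intersection of $\P^{d'-1}$ with the corank $\geq 4$ stratum, since the restricted gradient can also vanish at points where the ambient gradient is nonzero but annihilates the linear subspace $M(\cdot)$. Containment is all you need: the corank $\geq 4$ stratum is irreducible of codimension $\binom{4}{2}=6$ in $\P^{\binom{d}{2}-1}$, so the projective dimension theorem (applied over $\overline{\Q}$, using that smoothness means geometric smoothness) forces every linear $\P^{d'-1}$ with $d'\geq 7$ to meet it, whence $\mathfrak{P}_G$ would be singular. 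Second, once $d'\leq 6$ one has $d'-1\leq 5$, so $F_5(\mathfrak{P}_G)=\emptyset$ trivially (there are no $\P^5$'s in $\P^{d'-1}$); this is in fact a stronger and cleaner conclusion than the dimension count alone gives. In short, your proof is correct, takes the same route as the paper for the bulk of the argument, but is more careful and fills a real hole in the published proof.
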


\begin{proof}
We need to determine for which $d,d',i$ we have
$$d_{\i_i}=i(d'-i)-\binom{\frac{d}{2}+({i-1})}{i-1}\geq 0,$$  under the additional hypothesis $d\geq 6$, $d'\leq \frac{d(d-1)}{2}$
and $1\leq i\leq d'$. These additional conditions are
all trivial bounds coming from the group structure that governs the Pfaffian hypersurface.  
In general the binomial term grows in $d^{i-1}$, while  $d'$ grows at most in $d^2$, so asymptotically the binomial term surpasses
the quadratic term
quickly. In order to make this precise for small $d$ and $d'$.
We first observe that 
\begin{equation*}i(d'-i)-\binom{\frac{d}{2}+({i-1})}{i-1}\geq 0\end{equation*}
if and only if
\begin{equation*}
d' \geq \frac{1}{i}\binom{\frac{d}{2}+({i-1})}{i-1}+i.\end{equation*}
Trivially, we have $d'\leq \frac{d(d-1)}{2}$ so it is enough to show that
$$\frac{d(d-1)}{2}\geq\frac{1}{i}\binom{\frac{d}{2}+({i-1})}{i-1}+i$$
fails
for all  large enough $d$ and $i$. It is easy to calculate that if $i=6$ then
this inequality holds only if $d\leq 5$, which means that $d\leq 4$ and we have already dealt with this case. Thus it is enough to consider
$i\leq 5,$ as claimed
\end{proof}

For hypersurfaces that are assumed to be smooth, but not necessarily
generic  a proposition of Starr from the Appendix of \cite{H-B}
states that $F_{i-1}(\mathfrak{P}_G)=\emptyset$ for
$i>\frac{d'}{2}$. Beheshti \cite{Behe} has given some bounds for these
dimensions in characteristic zero. However, we do not know enough
about these dimensions in characteristic $p$ and so we only consider
generic hypersurfaces.

We have from (\ref{abscissa}) $$\alpha_G^\ns=\max_{2\leq i\leq d'-1}\left\{d+m,
\frac{i(d+d'+m-i)+1}{(d+i)},\frac{i(d+m)+d_{\i_i}+1}{d+i-1},
\frac{d+m+d_1+1}{d-1} ,\frac{d+d'+m}{d+1} \right\}.$$ 

We shall prove the corollary in two stages, first when $4d(d+m) > d'^2$ and then
$4d(d+m) \leq d'^2$. In the first case $4d(d+m)>d'^2$ and generalising the results from
\cite{Abscissa} that
$$\max_{1\leq i\leq d'-1}\left\{d+m,\frac{i(d+d'+m-i)+1}{d+i}
\right\}=d+m.$$ 
It is an easy case analysis to show that
$$\alpha_G^\ns=\max_{2\leq i\leq 5 }\left\{d+m,\frac{i(d+m)+d_{\i_i}+1}{d+i-1},
\frac{d+d'+m}{d+1} \right\}=d+m,$$ 
using the estimate for the dimension and the inequality
 $4d(d+m) > d'^2$, and we leave this to the reader.

In the second case, $4d(d+m)\leq d'^2$. We again leave $i=1$ to the reader, 
and we have from (\ref{abscissa}) 
$$\alpha_G=\max_{2\leq i\leq d'-1}\left\{
\frac{i(d+d'+m-i)+1}{(d+i)},\frac{i(d+m)+d_i+1}{d+i-1} \right\}.$$ 
To prove the corollary, by the Lemma \ref{bound for poles}, 
we need to show that
$$\max_{2\leq i\leq 5}\left\{\frac{i(d+m)+d_i+1}{d+i-1} \right\}< 
\max_{1\leq i\leq d'-1}\left\{\frac{i(d+d'+m-i)+1}{(d+i)}\right\}.$$
Let us take $i=\frac{d'}{2}$ on the right hand side of the above equation. Then
$$\max_{1\leq i\leq d'-1}\left\{\frac{i(d+d'+m-i)+1}{(d+i)}\right\}\geq
\frac{\left
(\frac{d'}{2}\right)\left(d+d'+m-\frac{d'}{2}\right)+1}
{d+\frac{d'}{2}}= \frac{\frac{d'}{2}\left(d+m+\frac{d'}{2}
  \right)+1}{d+\frac{d'}{2}}\geq \frac{d'}{2}$$ and we have a
simple bound
 for the quantity in question from below.
Using this bound, a case analysis for any $i=1,\dots, 5,$ shows that
\begin{equation*}
\frac{i(d+m)+d_i+1}{d+i-1} \leq \frac{d'}{2}\end{equation*}
since
$d\geq 6$.
This finishes the case $4d(d+m)\leq d'^2$ and hence the proof.

\section{The functional equation}\label{functeqn}

In this section we comment on the Corollary \ref{fn}, namely that, 
the local normal zeta function of a group $G$ as in Theorem \ref{main theorem general}, 
satisfies a functional equation of the form
\begin{equation*}\zeta_{G,p}^\ns(s)|_{p\mapsto
p^{-1}} = (-1)^{d+d'+m}p^{\binom{d+d'+m}{2}-(2d+d'+m)s} \cdot
\zeta_{G,p}^\ns(s).  \end{equation*}
By the Observation 2 on page 1013 \cite{Amer}, in our formulation, this is equivalent to
\begin{equation*}
A(p,p^{-s})|_{p\mapsto
p^{-1}}=(-1)^{d'-1}p^{\binom{d'}{2}}\cdot A(p,p^{-s}),\end{equation*}
where
$$A(p,p^{-s})=
W_0(p,p^{-s}) +
\sum_{i=1}^k\sum_{\i_i} \n_{\i_i}(p) W_{\i_i}(p, p^{-s}).$$ Recall from Theorem \ref{main theorem general} that $$W_{\i_i}(\X,\Y)=\I_{d'-i-1}(X_{d'-1},\dots,X_{i+1})E_{\i_i}(X_i,Y_{\i_i})
\I_{i-1}(Y_{\i_{i-1}},\dots,Y_{\i_1}).$$

To prove the Corollary \ref{fn} it is enough to show
that each of the summands $\n_{\i_i}(p)W_{\i_i}(\X,\Y)$, for $i\geq 0$, satisfy the same functional equation
as $A(p,p^{-s})$. We shall first establish inversion properties for the $W_{\i_i}(\X,\Y)$, and secondly define what the formal inversion of $p\mapsto p^{-1}$ means for the coefficients $\n_{\i_i}(p)$.

Following Igusa \cite{Igusa}, it is proved in \cite[Theorem 4]{FE} 
that for $\mathbf{U}=(U_1,\dots,U_n)=(p^{-a_1s+b_1},\dots,p^{-a_ns+b_n})$ we have
$$I_n(\mathbf{U})|_{U_i\mapsto U_i^{-1}} = (-1)^{n}
p^{\binom{n+1}{2}} I_n(\mathbf{U}),$$ where $I_n(\mathbf{U})$ as in Definition \ref{I-factor}.
Here the map $U_i\mapsto U_i^{-1}$ corresponds to $p\mapsto p^{-1}$.

So for each
$$W_{\i_i}(\X,\Y)=I_{d'-i-1}(X_{d'-1},\dots,X_{i+1})
E_{\i_i}(X_i,Y_{\i_i})I_{i-1}
(Y_{\i_{i-1}},
\dots,Y_{\i_1})$$
we get functional equation coefficients  $(-1)^{d'-i-1} p^{\binom{d'-i}{2}}$ and $(-1)^{i-1}
p^{\binom{i}{2}}$ from the Igusa factors. It remains to determine the functional
equation for $E_{\i_i}$. This is now easy, since we have the explicit
formula $$E_{\i_i}(X_i,Y_{\i_i})=
\frac{p^{-d_{\i_i}}Y_{\i_i}-p^{-n_i}X_i}{(1-X_i)(1-Y_{\i_i})},$$ which yields
$$E_{\i_i}(X_i,Y_{\i_i})|_{p\mapsto p^{-1}} = p^{n_i+d_{\i_i}} E_{\i_i}(X_i,Y_{\i_i}).$$
Here $n_i=i(d'-i)$ is the dimension of the Grassmannian $\G(n-1,d'-1)$,
and $d_{\i_i}$ is the dimension of the component $F_{\i_i}$ on $F_{i-1}(\mathfrak{P}_G)$.

Recall, that $\n_{\i_i}(p)$ denoted the number of $\F_p$-points on certain varieties. We shall formally define $\n_{\i_i}(p)_{p\mapsto p^{-1}}=p^{-d_{\i_i}}\n_{\i_i}(p)$.
This can be thought of coming from the fact
that the Hasse-Weil zeta function has a functional equation
if the variety is smooth and absolutely irreducible.
In that setting, together the rationality and the Riemann hypothesis for the Hasse-Weil zeta function 
imply that
if the components of the Fano varieties are smooth and absolutely irreducible then $$\n_{\i_i}(p)=p^{d_{\i_i}}+\dots+1+(-1)^{m}\sum_j \pi_j,$$
where $\pi_j\in\C$ and $|\pi_j|=\sqrt{p}$. Further,
the functional equation of the zeta function implies that $\pi_j\mapsto \frac{p^{d_{\i_i}-1}}{\pi_j}$ induces a permutation of the set $\{\pi_j\}$.
It follows that the inversion 
$\n_{\i_i}(p)|_{p\mapsto p^{-1}}=
p^{-d_{\i_i}} \n_{\i_i}(p)$ is well-defined. This yields
$$W_{\i_i}(\X,\Y)|_{p\mapsto p^{-1}}=
(-1)^{d'-i-1+(i-1)}p^{\binom{d'-i}{2}+n_i+\binom{i}{2}} W_{\i_i}(\X,\Y).$$
Since,
$\binom{d'-i}{2}+n_i+\binom{i}{2}= \binom{d'}{2}$, we get $$W_{\i_i}(\X,\Y)|_{p\mapsto p^{-1}}= (-1)^{d'}p^{\binom{d'}{2}}
W_{\i_i}(\X,\Y),$$ as required.

A paper by Debarre and Manivel \cite{Manivel} shows that if $\mathfrak{P}_G$ is a generic hypersurface, and moreover a complete intersection over an algebraically closed field, then the Fano variety is smooth, connected and has the expected dimension, i.e., \begin{equation*}d_{\i_i}=i(d'-i)-\binom{\frac{d}{2}+({i-1})}{i-1}. \end{equation*}

\section{Lattices and points in the projective space}\label{correspondence}

A lattice $\Lamb$ is an additive subgroup of $\Z_p^{d'}$. We say that
$\Lamb$ is maximal in its homothety class if  $\Lambda'\leq \Z_p^{d'}$ but $p^{-1}\Lamb\not\leq\Z_p^{d'}$.
The lattices which are maximal in their class are enumerated by elementary
divisor types.

The \emph{type} of $$\Lamb\isom diag(\underset{i_1}{\underbrace{
    p^{r_{i_1}+\dots+r_{i_l}},\dots, p^{r_{i_1}+\dots+r_{i_l}}}},
\underset{i_{2}-i_{1}}{\underbrace{p^{r_{i_2}+\dots+r_{i_l}},\dots,p^{r_{i_2}+\dots+r_{i_l}}}},
\dots,\underset{d'-i_l}{\underbrace{1,\dots,1 }})$$ is denoted by $\nu=(I,r_I),$ where
$I=\{i_1,\dots,i_l\}_<\subseteq\{1,\dots,d'-1\},$ with
$i_1<i_2<\dots<i_l,$ and the vector $r_I=(r_{i_1},\dots,r_{i_l})$ records the values of the $r_{i_j}.$
If we are only interested in the indices appearing in the type, and not the exact
values of the $r_{i_j}$, we say that $\Lamb$ has  {\it flag type} $I$.

Let $\Lamb$ be a maximal lattice of type $\nu=(I,r_I),$ as above.
The group $\GL_{d'}(\Z_p)$ acts transitively on the set of maximal
lattices, and the Orbit-Stabiliser Theorem gives a 1-1
correspondence between \begin{equation}\label{corr}\{\text{maximal lattices of type}\
\nu\}\overset{1-1}{\longleftrightarrow} \GL_{d'}(\Z_p)/G_\nu,\end{equation} where
$G_\nu$ is the  stabiliser of the diagonal matrix
$$diag(\underset{i_1}{\underbrace{ p^{r_{i_1}+\dots+r_{i_l}},\dots,
    p^{r_{i_1}+\dots+r_{i_l}}}},\underset{i_{2}-i_{1}}
{\underbrace{p^{r_{i_2}+\dots+r_{i_l}},\dots,p^{r_{i_2}+\dots+r_{i_l}}}},
\dots,\underset{d'-i_l}{\underbrace{1,\dots,1 }})$$
in $\GL_{d'}(\Z_p)$.
 
The matrices in the  stabiliser $G_\nu$ are of the form
\begin{displaymath}
\begin{array}({c| c  |  c  | c |  c })
\GL_{i_1}(\Z_p)& 
 *& * & \dots& 
*  \\ 
 \hline   p^{r_{i_1}}\Z_p & \GL_{i_2-i_{1}}(\Z_p) &   *
&\dots &* \\ \hline p^{r_{i_{1}}+r_{i_2}}\Z_p& p^{r_{i_{2}}}\Z_p &  \GL_{i_{3}-i_{2}}(\Z_p)  & \dots  &* 
\\ \hline
\vdots & \vdots & \vdots &\ddots &\vdots\\ \hline
p^{r_{i_1}+\dots +r_{i_{l-1}}}\Z_p&
p^{r_{i_2}+\dots +r_{i_{l-1}}}\Z_p &
p^{r_{i_3}+\dots +r_{i_{l-1}}}\Z_p &\dots  &\GL_{d'-i_l}(\Z_p), 
\end{array}\end{displaymath}
where $*$ indicates an arbitrary matrix.

By identifying $\Lamb$ with a coset representative $\beta G_\nu$,
we can think of the columns of
$\beta$ as points in $\P^{d'-1}(\Z_p)$ and then list them as
$\beta_{{i_j},k}$ 
where $i_j\in I$ indicates the block that  $\beta_{i_j,k}$ belongs
to,
and $k\in\{i_{j-1}+1,\dots,i_j\}$ 
is the running index across the columns of the matrix. 
Moreover, by the action of the stabiliser, we can multiply any
column $\beta_{i_j,k}$ by a unit, add multiples of $\beta_{i_{j_1},k_1}$
to $\beta_{i_{j_2},k_2}$, whenever $k_2>k_1$ (and necessarily
$i_{j_2}\geq i_{j_1}$), and also 
add $p^{r_{i_{j_1}}+\dots + r_{i_{j_2}-1}}\beta_{i_{j_2},k_2}$ to 
$\beta_{i_{j_1},k_1}$ when $j_2>j_1.$
If 
$b_{i_{j_1},n}^{i_{j_2},m}$ denotes the $(n,m)$-entry of $\beta$, the
above operations imply 
that $b_{i_{j_1},n}^{i_{j_2},m}\in \Z_p/(p^{r_{i_{j_1}}+\dots+ r_{i_{j_2}-1}}).$
This observation enables us to compute the number of lattices of a fixed elementary divisor type.

\begin{Def}
For each lattice $\Lamb$ of type $(I,r_I)$ we define the
\emph{multiplicity} of $\Lamb$, which we denote by $\mu(\Lamb)$, to be
the number of lattices 
of fixed type $(I,r_I)$, divided  by the factor $b_I(p)$ which counts the number of $\F_p$-points on
the corresponding
flag variety.\end{Def}

We now define an expression which encodes this multiplicity as a
function of $(I,r_I)$. One 
can check that in order to compute $\mu(\Lamb)$ we may assume that $b_{i_{j_1},n}^{i_{j_2},m}\in p\Z_p/(p^{r_{i_{j_1}}+\dots+ r_{i_{j_2}-1}}).$
The
function $\mu:\N\times \N\ra \N$ will measure the size of the set of $x\in
p\Z_p/(p^{a})$ of a fixed $p$-adic valuation as follows: 

\begin{Def}
Let $a,b$ be fixed positive integers. We define a binary function 
$\mu:\N\times \N\ra \N$ as follows \begin{equation*} \mu(a,b):=|\left\{x\in
p\Z_p/(p^{a}):v_p(x)=b\right\}|=\begin{cases} 1 & \text{if
$a=b$}\\ p^{a-b}(1-p^{-1}) & \text{if $a>b$}\\ 0
&\text{otherwise.}\end{cases}
\end{equation*}\end{Def}
This definition extends naturally to a $(n+1)$-ary function $\mu:\N\times \N^n\ra \N$; if
$\mathbf{b}=(b_1,\dots,b_n)$ is a vector then
$$\mu(a;{\bf
b}):=|\left\{\mathbf{x}\in (p\Z_p/(p^{a}))^n:v_p(x_i)=b_i\right\}|.$$ We note that
$\mu(a;b_1,b_2,b_3,\dots,b_n)=\mu(a,b_1)\mu(a,b_2)\dots\mu(a,b_n)$ and
\begin{equation}\label{sum mu}\sum_{b=1}^{a}\mu(a,b) =
p^{a-1}.\end{equation}

The next result records the multiplicity of a lattice of given type
$\nu=(I,r_I)$.

\begin{prop}\label{multiplicity}
Let $\Lamb$ be a maximal lattice of type $\nu=(I,r_I)$ with a coset 
representative
$\beta G_\nu$ under the 1-1 correspondence in (\ref{corr}), where $I=\{i_1,\dots,i_l,i_{l+1}\}$ and $\beta\in\GL_{d'}(\Z_p)$. Write $b_{i_{j_1},k}^{i_{j_2},m}$ for the $(k,m)$ entry of $\beta$, where
$k\in\{i_{j_1-1}+1,\dots,i_{j_1}\}$ and $m\in\{i_{j_2-1}+1,\dots,i_{j_2}\},$ so that the pair $(i_{j_1}, i_{j_2})$ indicates the block of this entry.
Then 
\begin{align*}\mu(\Lamb) &=
\prod_{\underset{j_1<j_2}{j_1,j_2\in
\{1,\dots,l,l+1\}}}
\prod_k\prod_m
\sum_{b_{i_{j_1},k}^{i_{j_2},m}=1}^{r_{i_{j_1}}+\dots+ r_{i_{j_2-1}} }\mu (
r_{i_{j_1}}+\dots+ r_{i_{j_2-1}},b_{i_{j_1},k}^{i_{j_2},m})\\
& = \prod_{\underset{j_1<j_2}{j_1,j_2\in
\{1,\dots,l,l+1\}}}
p^{i_{j_1}i_{j_2}(r_{i_{j_1}}+\dots+ r_{i_{j_2-1}}-1)}
= p^{-\dim\flag_I +\sum_{i_j\in I} (d'-i_j)i_jr_{i_j}},\end{align*} where $d'=i_{l+1}$ and $I=\{i_1,\dots,i_l,i_{l+1}\}$.
\end{prop}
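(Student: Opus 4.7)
The plan is to use the bijection from (\ref{corr}) to represent each lattice $\Lamb$ of type $\nu = (I, r_I)$ by a coset $\beta G_\nu$ and then compute $\mu(\Lamb)$ by analyzing the normal form of $\beta$ modulo the right action of $G_\nu$. Using the description of $G_\nu$ given just before the proposition, the action decomposes into three pieces: the diagonal $\GL$-blocks act on each column-block of $\beta$ by arbitrary $\Z_p$-changes of basis; the upper-triangular $*$-entries of $G_\nu$ add arbitrary multiples of later column-blocks into earlier ones; and the lower-triangular $p^{\dots}\Z_p$-entries reduce the upper-triangular entries of $\beta$ modulo the appropriate powers of $p$. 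After these reductions, each entry $b_{i_{j_1},k}^{i_{j_2},m}$ with $j_1<j_2$ takes values in $\Z_p/(p^{r_{i_{j_1}} + \dots + r_{i_{j_2-1}}})$, exactly as stated in the paragraph preceding the proposition.

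The next step is to separate the ``flag contribution'' from the ``off-diagonal contribution''. The diagonal blocks of $\beta$, modulo the diagonal $\GL$-blocks of $G_\nu$, parametrise precisely a flag of type $I$ in $\F_p^{d'}$, contributing the factor $b_I(p)$ to the total count. By the definition of $\mu(\Lamb)$, this factor is cancelled, leaving only the contribution of the off-diagonal entries. The key observation --- the ``one can check'' in the proposition --- is that this cancellation corresponds to restricting each off-diagonal entry from $\Z_p/(p^{r_{i_{j_1}} + \dots + r_{i_{j_2-1}}})$ to $p\Z_p/(p^{r_{i_{j_1}} + \dots + r_{i_{j_2-1}}})$: one copy of $\F_p$ per entry is absorbed into the choice of flag. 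Summing over the $p$-adic valuation of each remaining entry with the $\mu$-function then yields the first equality of the proposition.

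The remaining two equalities are a direct calculation. The identity $\sum_{b=1}^{a}\mu(a,b)=p^{a-1}$ from (\ref{sum mu}) collapses each inner sum to a factor $p^{r_{i_{j_1}}+\dots+r_{i_{j_2-1}}-1}$; since block $(j_1,j_2)$ carries $(i_{j_1}-i_{j_1-1})(i_{j_2}-i_{j_2-1})$ entries, the product over all entries in the block is a single power of $p$. It then remains to identify the total exponent with $-\dim \flag_I + \sum_{i_j \in I}(d' - i_j) i_j r_{i_j}$. Writing $s_j := i_j - i_{j-1}$ with the conventions $i_0=0$ and $i_{l+1}=d'$, the coefficient of $r_{i_k}$ telescopes to $\bigl(\sum_{j_1 \leq k} s_{j_1}\bigr)\bigl(\sum_{j_2 > k} s_{j_2}\bigr) = i_k(d'-i_k)$, and the constant term equals $-\sum_{j_1<j_2} s_{j_1}s_{j_2} = -\bigl(\binom{d'}{2}-\sum_j \binom{s_j}{2}\bigr) = -\dim\flag_I$ by the standard dimension formula for a partial flag variety.

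The main technical obstacle is the passage from $\Z_p/(p^{\dots})$ to $p\Z_p/(p^{\dots})$ for the off-diagonal entries after dividing out by $b_I(p)$; once this normalisation is justified --- essentially by checking that the diagonal $\GL$-orbit through $\beta$ faithfully encodes both the flag and a distinguished $\F_p$-layer of each upper-triangular entry --- everything else is a routine application of (\ref{sum mu}) together with the classical binomial identity for flag-variety dimensions.
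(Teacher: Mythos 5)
Your approach is essentially the same as the paper's: the paper's proof is a one-liner that excludes the factor $b_I(p)$ by passing to the first congruence subgroup and then enumerates entries using equation (\ref{sum mu}), citing \cite{Abscissa} for the detailed count, whereas you spell those details out. Your decomposition into flag contribution plus off-diagonal lift count, the restriction from $\Z_p/(p^a)$ to $p\Z_p/(p^a)$, and the telescoping computation with $s_j=i_j-i_{j-1}$ that identifies the exponent with $-\dim\flag_I+\sum_{i_j\in I}(d'-i_j)i_jr_{i_j}$ are all correct and match the paper's stated final formula (note that the block $(j_1,j_2)$ contributes $s_{j_1}s_{j_2}$ entries, as you compute, not $i_{j_1}i_{j_2}$ as the paper's intermediate display literally reads). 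One small slip: you describe the upper-triangular $*$-blocks of $G_\nu$ as adding multiples of later column-blocks into earlier ones, when right multiplication by an upper-triangular element in fact adds multiples of earlier columns into later ones, with the lower-triangular $p^{\cdot}\Z_p$-blocks providing the residual reduction in the other direction; this does not affect the validity of the final count, but the orientation of the operations should be stated correctly.
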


\begin{proof}
By excluding the factor $b_I(p)$, we can assume that we are inside the
first congruence subgroup, 
see \cite{Abscissa} page 327 for details. Now the result follows by carefully enumerating the entries in the matrix $\beta$ and by
applying the equation (\ref{sum mu}) above.
\end{proof}

\section{Grassmannians and flag varieties}\label{flags}

The set of $(k-1)$-planes in $\P^{d'-1}= \P(V)$ admits a variety
structure via the standard 
Pl\"ucker embedding into $\P^N=\P(\bigwedge^k V)$, where $N=\binom{d'}{k}$.

We can give local affine coordinates for open subsets on the Grassmannian $G(k,d')$ in the following way. Let $\Gamma\subset V$ be a subspace of dimension $d'-k$
corresponding to a multivector $\omega\in \bigwedge^{d'-k} V$. Now $\omega$ can be thought of as a homogeneous linear form on $\P(\bigwedge^k V)$, and we define $U\subset\P(\bigwedge^kV)$ to be the affine open subset defined by $\omega\neq 0$. Thus $U\cap G(k,V)$ is just the set of $k$-dimensional subspaces $\Delta\subset V$ that are complementary to $\Gamma$.

To give this construction in coordinates, let $\Gamma\subset V$ be a subspace of dimension $d'-k$, spanned by $e_{k+1},\dots, e_{d'}\in \F_p^{d'}$, say. Then $U\cap G(k,V)$ is the subset of spaces $\Delta$ such that the $k\times d'$ matrix $M_\Delta$ whose first $k\times k$ minor is non-zero. It follows that any $\Delta\in U\cap G(k,V)$ can be represented as the row space of a unique matrix of the form 
\begin{equation}\label{affine}
\begin{array}({ccccccc cc })
1&0&0&\dots &0& a_{11}&a_{12}&\dots&a_{1,d'-k}\\
0&1&0&\dots &0& a_{21}&a_{22}&\dots&a_{2,d'-k}\\
0&0&1&\dots &0& a_{31}&a_{32}&\dots&a_{3,d'-k}\\
\vdots &&&&&&&&\\
0&0&0&\dots &1& a_{k1}&a_{k2}&\dots&a_{k,d'-k}
\end{array}\end{equation} and vice versa. The entries of this matrix then give the bijection of $U\cap G(k,V)$ with $\F_p^{k(d'-k)}$.

A flag variety $\flag_I$ is a generalisation of a Grassmannian.  Here
$I=\{i_1,\dots,i_l\}_<\subseteq\{1,\dots,d'\}$ and $\flag_I$
is a subvariety of a product of Grassmannians defined by the incidence correspondence 
$$\flag_I:=\{(\Pi_1,\dots,\Pi_l):\Pi_1\subset\dots\subset \Pi_l\} \subset \G(i_1-1,d'-1)\times\dots\times \G(i_l-1,d'-1)\}.$$
By generalising the above construction  for affine coordinates of points on the Grassmannian, we can choose local coordinates for points on the flag variety. 

We can also define flags of type $I=\{i_1,\dots,i_l\}$ starting from the lattices $\Lamb$ of flag type $I$ in the following way.
As before,
let $\Lamb$ be a lattice of type $\nu=(I,r_I)$ which corresponds to the
coset $\beta G_\nu$ under the 1-1 correspondence (\ref{corr}), for some fixed $\beta\in
\GL_{d'}(\Z_p)$. Write $\beta_{i_j,k}$ for the columns of $\beta$.
Let $\overline{\beta_{i_j,k}}$ denote the reduction of $\beta_{i_j,k}$
mod $p$. These $\overline{\beta_{i_j,k}}$ can be thought of as points in $\P^{d'-1}(\F_p)$.
We define vector spaces for each $i_j\in I$ by setting
$$V_{i_j}=\<\overline{\beta_{i_1,1}},\dots,\overline{\beta_{i_j,i_j}}
\>_{\F_p}<\F_p^{d'}.$$ We observe that
$\mathrm{dim}_{\F_p}(V_{i_j})=i_j.$ We call 
$(V_{i_j})_{i_j\in I}$ the flag of type $I$ associated to $\Lamb$.
Obviously, a number of lattices will give the same flag mod $p$.

\begin{Def}\label{lift}
Let $\mathfrak{F}\in\flag_I$, where
$I=\{i_1,\dots,i_l\}_<$, and let $\Lamb$ be a maximal lattice of type $I$. Then $\Lamb$ is said to be a lift of $\mathfrak{F}$ if its associated flag
$(V_{i})_{i\in I}$ is equal to $\mathfrak{F}$.
\end{Def}

The stabiliser of each flag $\mathfrak{F}\in\flag_I$ is the standard parabolic subgroup $P_I$ in $\GL_{d'}(\F_p)$. Each of these parabolic subgroups contain the standard Borel subgroup of upper triangular matrices.

\begin{Def}\label{equivalent vectors}
Two maximal lattices $\Lamb_1$ and $\Lamb_2$ of type $(I,r_I)$ are said to be  \emph{equivalent} if they are lifts of the same flag $\mathfrak{F}\in\flag_I$. 
\end{Def}

This is defines an equivalence relation $\sim$ on the set of maximal lattices. We note that $\Lamb_1\sim \Lamb_2$ if and only if the respective coset representatives of  $\Lamb_1$ and $\Lamb_2$ differ by an action of right multiplication by the parabolic subgroup $P_I$ in $\GL_{d'}(\Z_p)$. 
In particular, this observation allows us to change the coordinates of the flag, as well as providing a uniform way to move between different cosets corresponding to liftings of the same flag.

Let $\Lamb$ be  a maximal lattice of type $I=\{i_1,\dots,i_l\}_<,$ corresponding to  $\beta$ with columns
$\{\beta_{i_1,1},\dots,\beta_{i_1,i_1},\beta_{i_2,i_1+1},\dots,\beta_{i_2,i_2},\dots
\beta_{i_l,i_l},\dots \beta_{d',d'}\}$, and  entries $b_{i_{j_1},k}^{i_{j_2},m}$. We will construct a lattice $\Lamb'$ such that $\Lamb \sim \Lamb'.$ To do this, first set
\begin{equation}\label{borel}B=
\begin{array}({ccccc})
\lambda_1 &\lambda_1 & \lambda_1 & \dots &\lambda_{1}\\
 &\lambda_2 & \lambda_2& \dots &\lambda_{2}\\
& & \lambda_3 & \dots &\lambda_{3}\\
&&&\ddots&\vdots\\
 & &  & &\lambda_{d'}\\
\end{array},\end{equation} where all the entries are $p$-adic units. This is clearly an element of the standard Borel subgroup, so $B$ is contained in each of the parabolic subgroups $P_I$, and thus multiplying $\beta G_\nu$ from the right by $B$ leaves the associated flag invariant. We have now moved to the coset  $\beta G_\nu B =\alpha G_\nu$,  and as before we shall
denote the columns of $\alpha$
by 
$\{\alpha_{i_1,1},\dots,\alpha_{i_1,i_1},\alpha_{i_2,i_1+1},\dots,\alpha_{i_2,i_2},\dots
\alpha_{i_l,i_l},\dots, \alpha_{d',d'}\},$ which expand as
\begin{equation}\label{affpts}
\alpha_{i_{j},k}=
\left(
    \begin{array}{c}
a_{i_{j},k}^{i_{l+1},d'}\\
\vdots\\
a_{i_{j},k}^{i_{j_2},m}\\
\vdots\\
a_{i_{j},k}^{i_{1},1}
    \end{array} \right) =
\lambda_1\left(
    \begin{array}{c}
b_{i_{1},1}^{i_{l+1},d'}\\
\vdots\\
b_{i_{1},1}^{i_{j_2},m}\\
\vdots\\
b_{i_{1},1}^{i_{1},1}
    \end{array} \right)+\dots+
\lambda_n\left(
    \begin{array}{c}
b_{i_{j_1},n}^{i_{l+1},d'}\\
\vdots\\
b_{i_{j_1},n}^{i_{j_2},m}\\
\vdots\\
b_{i_{j_1},n}^{i_{1},1}
    \end{array} \right)+ \dots
          +\lambda_k\left(
    \begin{array}{c}
b_{i_{j_1},k}^{i_{l+1},d'}\\
\vdots\\
b_{i_{j_1},k}^{i_{j_2},m}\\
\vdots\\
b_{i_{j_1},k}^{i_{1},1}
    \end{array} \right).
\end{equation}

However, this is not enough, since even if the reductions mod $p$ span the same flags, we will also require that
the coordinates of the $\alpha_{i_j,k}$ lie in the same quotient ring as the coordinates of  $\beta_{i_j,k}$.
This is not the case here, since e.g. $a_{i_{j},k}^{i_{l+1},d'}\in p\Z_p/(p^{r_{i_1}+\dots+r_{i_l}}),$
while $b_{i_{j},k}^{i_{l+1},d'}\in p\Z_p/(p^{r_{i_{j}}+\dots+r_{i_l}})$.
In order to make the coordinates of the $\alpha_{i_j,k}$ to lie in the
same quotient ring as those of $\beta_{i_j,k}$ we recall that
the $\alpha_{i_j,k}$ are unique up to the action of right-multiplication by $G_\nu$, and this will give us a reduction of the form
$b_{i_{j_1},n}^{i_{j_2},m} \mod p^{r_{i_{j_1}}+\dots+r_{i_{j-1}}}$,
so we may indeed assume each of the coordinates of the $\alpha_{i_j,k}$ lie in the same quotient of $p\Z_p$ as the coordinates of  $\beta_{i_j,k}$. 
To avoid unnecessary notation, all the coordinates where this kind of reduction has been performed will be
denoted by  $\hat{a}_{i_{j_1},k}^{i_{j_2},m}$.

\section{Solution sets for systems of linear congruences}\label{congruences}

Let $\Lamb$ be a maximal lattice of type $I=\{i_1,\dots ,i_l\}_<$ corresponding to the coset
$\beta G_\nu$. Let
$\{\beta_{i_1,1},\dots,\beta_{i_1,i_1},\beta_{i_2,i_1+1},\dots,\beta_{i_2,i_2},\dots
\beta_{i_l,i_l}\}$ denote the columns of $\beta$.
We want to determine the index of the kernel of the following system of linear
congruences for each $i_j\in I$ and  $k\in\{i_{j-1}+1,\dots,i_j\}$
\begin{equation}
\label{conditions}
\g M(\beta_{i_j,k})\equiv 0 \mod p^{r_{i_j}+\dots+r_{i_l}},
\end{equation}
where $\g=({g}_1,\dots,{g}_d)\in \Z_p^d$. And $M(\mathbf{y})$ is the matrix of relations in a presentation of $\Gamma$ as in Definition \ref{pfaffian}.

We shall denote the index of the kernel of the above system of linear congruences by $w'(\Lamb)$ and call it the weight function of the lattice $\Lamb$.
This index has a group theoretical significance, which we shall explain in Section \ref{basic theory}. 

The main point of this section is determine the value of this function for
different kinds of lattices $\Lamb$.

It is clear from (\ref{conditions}) that the solution set of
this system of linear congruences depends on
whether the matrix $M(\beta_{i_j,k})$ has full rank or not, i.e.,
depending on whether the determinant of the matrix $M(\mathbf{y})$ evaluated at $\beta_{i_j,k}$
is zero mod $p$ or not. Recall, that we insist that the Pfaffian is not identically zero. The vanishing locus of the square root of the determinant of
$M(\mathbf{y})$ defines a hypersurface in $\P^{d'-1}(\F_p)$. This hypersurface is called the Pfaffian hypersurface,
as in Definition \ref{pfaffian}, and we will denote it by
$\mathfrak{P}_G$.

Let us briefly sketch the connection with the arithmetic geometry which arises in this context and the type of geometric problems we encounter.
For convenience, let us consider a lattice of type $(p^{r_{i_1}},\dots,p^{r_{i_1}},1,\dots,1)$ which gives rise to column vectors
$\{\beta_{1},\dots,\beta_{i_1},\beta_{i_1+1},\dots,\beta_{d'}\}$  so that $\beta_{i_1+1},\dots,\beta_{d'}$ give redundant congruence conditions and there exists $k\in\{1,\dots,i_1\}$ such that $\det M(\beta_k)\equiv 0\mod p$ with  respect to the system of linear congruences. 

The $\beta_1,\dots,\beta_{i_1}$ are unique only up to multiplication by a unit and addition of $\Z_p$-multiple of $\beta_i$ to $\beta_j$ for any $i,j\in\{1,\dots,i_1\}$, so it follows that either all mod $p$ reductions $\overline{\beta_k}$  of $\beta_k\in\P^{d'-1}(\Z_p)$ are  $\overline{\beta_1},\dots,\overline{\beta_{i_1}}\in \mathfrak{P}_G$ or all $\overline{\beta_1},\dots,\overline{\beta_{i_1}}\not\in \mathfrak{P}_G$. These cases correspond to whether the $(i_1-1)$-plane $\langle\overline{\beta_1},\dots,\overline{\beta_{i_1}}\rangle$ is contained on $\mathfrak{P}_G$ or not. 

As mentioned in Section \ref{flags} the $(i_1-1)$-planes in $\P^{d'-1}$ are parametrised by a Grassmannian variety. The $(i_1-1)$-planes that are contained on $\mathfrak{P}_G$ are parametrised by the Fano variety $F_{i_1-1}(\mathfrak{P}_G)$.
This Fano variety may be reducible, and we will denote its components by $F_{\i_{i_1}}$.

The most convenient description of this Fano variety is as a subvariety of the Grassmannian.
Let us now describe a construction of its defining ideals on the Grassmannian. Choose an $(i_1-1)$-plane on the Pfaffian.
Using the construction of the affine coordinates given in Section \ref{flags}, we can identify an open neighbourhood of this point on the Grassmannian $\G(i_1-1,d'-1)$ as an affine space. In this neighbourhood, a hyperplane can be represented as the row span of the matrix (\ref{affine}) as in Section \ref{flags}, with coefficients $\mu_i$, say. Restricting this hyperplane onto the Pfaffian will give us a bihomogeneous polynomial of bidegree $(\frac{d}{2},\frac{d}{2})$. The coefficients of the monomials in the $\mu_i$ will give us the defining ideals of the Fano variety on the Grassmannian. We shall denote the dimension of the Fano variety by $d_{\i_{i_1}}$ and its codimension on the Grassmannian by $c_{{\i_{i_1}}}$.

It is however, not enough to know that the determinant of the matrix of relations is zero mod $p$ at the smooth points. We know the exact rank of the matrix or relations from the following lemma.  

\begin{lemma}[Voll \cite{singular}]\label{rank}
If $\mathfrak{P}_G\subseteq \P^{d'-1}(\Q)$ is smooth, then the rank of the matrix of relations $G$ at any smooth point is equal to $d-2$. 
\end{lemma}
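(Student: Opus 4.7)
The plan is to combine two classical facts about antisymmetric matrices with the geometric smoothness hypothesis. Since $M(\mathbf{y})$ is antisymmetric of even size $d$, one has the identity $\mathrm{det}\, M(\mathbf{y}) = \mathrm{Pf}(M(\mathbf{y}))^2$, so (working with the reduced hypersurface as in Theorem~\ref{main theorem general}) I will identify $\mathfrak{P}_G$ with the zero locus of the Pfaffian $\mathrm{Pf}(M(\mathbf{y}))$. Fix a smooth point $y_0 \in \mathfrak{P}_G$. Because the rank of any antisymmetric matrix is even, the equation $\mathrm{Pf}(M(y_0)) = 0$ already forces $\mathrm{rank}\, M(y_0) \leq d-2$, giving the upper bound for free.

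The core of the argument is the reverse inequality. I would invoke the classical Pfaffian derivative identity
\begin{equation*}
\frac{\partial\,\mathrm{Pf}(M)}{\partial m_{ij}} \;=\; (-1)^{i+j-1}\,\mathrm{Pf}\bigl(M_{\widehat{i}\widehat{j}}\bigr),
\end{equation*}
where $M_{\widehat{i}\widehat{j}}$ is the $(d-2)\times(d-2)$ antisymmetric matrix obtained by deleting rows and columns $i$ and $j$. Since every entry $m_{ij}(\mathbf{y})$ is a linear form in $\mathbf{y}$, the chain rule yields
\begin{equation*}
\frac{\partial\,\mathrm{Pf}(M(\mathbf{y}))}{\partial y_k} \;=\; \sum_{i<j} (-1)^{i+j-1}\,\mathrm{Pf}\bigl(M(\mathbf{y})_{\widehat{i}\widehat{j}}\bigr)\cdot\frac{\partial m_{ij}(\mathbf{y})}{\partial y_k},
\end{equation*}
so every partial of $\mathrm{Pf}(M(\mathbf{y}))$ is a $\mathbf{y}$-independent linear combination of $(d-2)\times(d-2)$ principal Pfaffians of $M(\mathbf{y})$.

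Smoothness of $\mathfrak{P}_G$ at $y_0$ means that at least one such partial is non-zero at $y_0$; by the displayed formula this forces at least one $(d-2)\times(d-2)$ principal Pfaffian of $M(y_0)$ to be non-zero. For an antisymmetric matrix the existence of a non-vanishing $2k\times 2k$ principal Pfaffian is equivalent to having rank at least $2k$, hence $\mathrm{rank}\, M(y_0) \geq d-2$. Combining with the upper bound yields $\mathrm{rank}\, M(y_0) = d-2$.

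The only delicate point, and the place where one must be careful, is the interpretation of \emph{smooth}. Applying the Jacobian criterion directly to $\mathrm{det}\, M = \mathrm{Pf}(M)^2$ produces a gradient that vanishes identically on $\mathfrak{P}_G$, giving no information. This is precisely why the reduced hypothesis is built into Theorem~\ref{main theorem general}: smoothness must be computed with respect to the square-free equation $\mathrm{Pf}(M)=0$. Once this point is settled the argument is purely computational, and the classical Pfaffian expansion does all the work — no deeper geometric input is needed.
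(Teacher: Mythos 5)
The paper cites this lemma from Voll's work without reproducing a proof, so there is no in-paper argument to compare against; your derivation is the standard one. It is correct: the evenness of the rank of an antisymmetric matrix together with $\det M = \mathrm{Pf}(M)^2$ gives the upper bound $\mathrm{rank}\,M(y_0)\leq d-2$, the Pfaffian cofactor expansion of the gradient together with the Jacobian criterion applied to the reduced equation $\mathrm{Pf}(M(\mathbf{y}))=0$ gives the lower bound, and you correctly identify that smoothness must be measured against the square-free Pfaffian equation rather than $\det M=\mathrm{Pf}^2$, whose gradient vanishes identically along the locus.
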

In
particular, the matrix of relations at a smooth point always contains
a $(d-2) \times (d-2)$ minor with a $p$-adic unit determinant.

\begin{Def}
Let $A\in\mathrm{Mat}_{d,l}(\Z_p)$. Then the corank of $A$ is defined to be  $r=d-n$, where
$n\times n$ is the largest $p$-adic unit minor. 
\end{Def}

In view of, (\ref{conditions}) we are interested only in the
index of the kernel of the set of equations $$\g M(\beta_{i_j,k})\equiv 0\mod p^{r_{i_j}+\dots+r_{i_l}}$$
for $i_j\in I=\{i_1,\dots,i_l\}$ and $k\in\{i_{j-1}+1,\dots,i_j\}$.

\begin{remark}
It is important to note that only the index matters; this is the
single fact that makes calculating normal zeta functions of class
two nilpotent groups so much easier than calculating the
subgroup zeta function. \end{remark}

The information we need from the matrices $M(\beta_{i_j,k})$ for the computation of the index is independent of the basis chosen.
In the next few pages we show that, by a suitable choice of homogeneous coordinates, we can arrange the points on the Pfaffian so that
the solution sets of these congruences form a chain.

Let us first make a reduction.
\begin{lemma}\label{reducing weights}
Let $\Lamb$ be a lattice of flag type $I=J_1\cup J_2$ where $J_1=\{i_1,\dots,i_l \}$ and
$J_2=\{i_{l+1},\dots,i_m\}.$ Suppose that the intersection of the flag $(V_{i_j})_{i_j\in I}$ associated with $\Lamb$ with the Pfaffian hypersurface is the flag $\mathfrak{F}\in \flag_{J_1}$. Then
\begin{equation*}w'(\Lamb)=w'(\Lamb,I,r_I)=w'(\Lamb,J_1,r_{J_1})
+w'(\Lamb,J_2,r_{J_2}),
\end{equation*} where $J_1=\{i_1,\dots,i_l \}$ and
$J_2=\{i_{l+1},\dots,i_m\}.$ Moreover,
$$w'(\Lamb, J_2,r_{J_2})=d(r_{i_{l+1}}+\dots + r_{i_{m}}).$$ 
\end{lemma}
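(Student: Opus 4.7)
The plan is to decouple the system of linear congruences defining $w'(\Lamb)$ along the partition $I = J_1 \cup J_2$, exploiting the fact that columns indexed by $J_2$-blocks reduce to points off the Pfaffian. As a preliminary reduction I would use the stabiliser/Borel action of Section~\ref{flags} to choose a coset representative $\beta$ for $\Lamb$ with the property that for each $i_j \in J_2$ the reductions $\overline{\beta_{i_j,k}} \in \P^{d'-1}(\F_p)$ all lie outside $\mathfrak{P}_G$; this is possible because the hypothesis that the flag meets $\mathfrak{P}_G$ only in the $J_1$-subflag forces $V_{i_j}\cap\mathfrak{P}_G$ to be a proper subvariety of $V_{i_j}$ for every $i_j\in J_2$, so generic completions of the chain avoid $\mathfrak{P}_G$. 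For every such column, $\det M(\beta_{i_j,k})$ is then a $p$-adic unit, hence $M(\beta_{i_j,k})$ is invertible modulo every power of $p$, and the congruence $\g M(\beta_{i_j,k}) \equiv 0 \bmod p^{r_{i_j}+\cdots+r_{i_m}}$ is equivalent to $\g \in p^{r_{i_j}+\cdots+r_{i_m}}\Z_p^d$.

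Setting $N := r_{i_{l+1}}+\cdots+r_{i_m}$, the intersection of the $J_2$-kernels collapses to the smallest of these subgroups, namely $p^N\Z_p^d$, which has index $p^{dN}$ in $\Z_p^d$; taking $p$-adic logarithms of indices yields the second assertion $w'(\Lamb,J_2,r_{J_2}) = dN$. For the additive decomposition I would substitute $\g = p^N \g'$ with $\g'\in\Z_p^d$ into the $J_1$-congruences. Since $(r_{i_j}+\cdots+r_{i_m}) - N = r_{i_j}+\cdots+r_{i_l}$, the congruence $\g M(\beta_{i_j,k}) \equiv 0 \bmod p^{r_{i_j}+\cdots+r_{i_m}}$ becomes $\g' M(\beta_{i_j,k}) \equiv 0 \bmod p^{r_{i_j}+\cdots+r_{i_l}}$, which is exactly the system of congruences defining $w'(\Lamb,J_1,r_{J_1})$. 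Writing $\tilde K\subseteq\Z_p^d$ for its kernel, the full kernel is $p^N\tilde K$, and via the isomorphism $x\mapsto p^N x$ one gets $[\Z_p^d : p^N\tilde K] = [\Z_p^d:p^N\Z_p^d]\cdot[p^N\Z_p^d:p^N\tilde K] = p^{dN}\cdot[\Z_p^d:\tilde K]$. Taking logarithms gives $w'(\Lamb,I,r_I) = dN + w'(\Lamb,J_1,r_{J_1})$, as required.

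The main (relatively mild) obstacle is the preliminary step of choosing the coset representative $\beta$ whose $J_2$-column reductions all miss $\mathfrak{P}_G$; this is where the geometric hypothesis on the flag-Pfaffian intersection is actually used, since without such a reduction a $J_2$-column could lie on $\mathfrak{P}_G$ and fail to give an invertible $M(\beta_{i_j,k})$. Once this is arranged, the rest of the argument reduces to purely linear-algebraic bookkeeping over $\Z_p$, powered by the telescoping identity $(r_{i_j}+\cdots+r_{i_m}) - N = r_{i_j}+\cdots+r_{i_l}$.
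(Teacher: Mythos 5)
Your proof is correct and follows essentially the same route as the paper: choose a coset representative (via the stabiliser action) for which the $J_2$-columns reduce to points off $\mathfrak{P}_G$, so the corresponding $M(\beta_{i_j,k})$ are invertible over $\Z_p$ and those congruences collapse to $\g\equiv 0\bmod p^{r_{i_{l+1}}+\dots+r_{i_m}}$, after which a telescoping of the moduli separates the $J_1$- and $J_2$-systems. The only difference is presentational: you make explicit the substitution $\g=p^N\g'$ and the index identity $[\Z_p^d:p^N\tilde K]=p^{dN}[\Z_p^d:\tilde K]$, whereas the paper simply declares the two systems to be independent and reads off the result; your version is a careful unpacking of the same argument, not a different one.
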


\begin{proof}
Let
$\{\beta_{i_1,1},\dots,\beta_{i_1,i_1},\beta_{i_2,i_1+1},\dots,\beta_{i_2,i_2},\dots
\beta_{i_m,i_m}\}$ be
the columns of $\beta$ in $\beta G_\nu$ corresponding to $\Lamb$. Then
$M(\beta_{i_1,1}),\dots, M(\beta_{i_1,i_1})$,$M(\beta_{i_2,i_1+1}),\dots,
M(\beta_{i_l,i_l})$ have determinants zero mod $p$, and we can assume that the determinants of
$M(\beta_{i_{l+1},i_l+1}),\dots,M(\beta_{i_m,i_m})$ are $p$-adic units. As before, we can do this since the $\beta_{i_j,k}$ are unique only up to the action of the stabiliser.

Writing this explicitly, we
need to solve the following systems of linear congruences
\begin{align*}
\g M(\beta_{i_j,k})&\equiv 0 \mod p^{r_{i_j}+\dots+r_{i_m}} &\forall i_j&\in J_1, k\in\{i_{j-1}+1,\dots,i_j \},\\
 \g &\equiv 0 \mod p^{r_{i_j}+\dots+r_{i_m}}&\forall i_j&\in J_2.
\end{align*}
which is equivalent to the two independent sets of equations
\begin{align*}
\g M(\beta_{i_j,k})&\equiv 0 \mod p^{r_{i_j}+\dots+r_{i_l}}&\forall i_j&\in J_1, k\in\{i_{j-1}+1,\dots,i_j\}, \\ 
\g &\equiv 0 \mod p^{r_{i_{l+1}}+\dots+r_{i_m}}.&&
\end{align*}
The first of these is equivalent to the congruence conditions that arise for
$\Lamb$ of type $(J_1,r_{J_1})$, while the  second corresponds to the congruences conditions for
$w'(\Lamb,J_2,r_{J_2})$. The latter is also equal to 
$$w'(\Lamb,J_2,r_{J_2})=d(r_{i_{l+1}}+\dots+r_{i_m})$$ as claimed.
\end{proof}

Let us now consider the possible intersections of a flag $\mathfrak{F}\in\flag_{J_1}$ of type $J_1=\{i_1,\dots,i_l\}$
associated to $\Lamb$ with the Pfaffian hypersurface of $G$.

\subsection*{Case I: The intersection is empty}

In the light of the previous lemma, first suppose that is $J_1=\emptyset$. Then
the weight function is $$w'(\Lamb)=d\left(\sum_{i\in J_2} r_i\right)=
d\left(\sum_{i\in I} r_i\right),$$ since $I= J_2$.

\subsection*{Case II: The intersection is a hyperplane}

Next let us assume the intersection is a hyperplane.

If $J_1=\{1\}$ then the desired result follows from the following:

\begin{lemma}[Voll \cite{FE}]\label{smthpt}
Let $\xi\in \mathfrak{P}_G$, and let $\Lamb$ be a lattice of
type $J_1=\{1\}$ such that the flag associated with $\Lamb$ intersects the Pfaffian exactly at this fixed point. Then 
is $w'(\Lamb,1,r_1)= dr_1-2\min\{r_1,v_p(\det M(\beta_{1,1}))\}$.
\end{lemma}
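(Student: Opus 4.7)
The plan is to bring $M(\beta_{1,1})$ into a skew-symmetric normal form via a $\GL_d(\Z_p)$-congruence, and then read off the index of the kernel of $\g M(\beta_{1,1}) \equiv 0 \pmod{p^{r_1}}$ block by block.

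First, since $\xi \in \mathfrak{P}_G$ is a smooth point, Lemma \ref{rank} says that $M(\beta_{1,1})$ has rank exactly $d-2$ modulo $p$. The matrix is antisymmetric with entries in $\Z_p$, so the classical structure theorem for skew-symmetric forms over the PID $\Z_p$ produces $P \in \GL_d(\Z_p)$ such that
\[
P\, M(\beta_{1,1}) \, P^T \;=\; \bigoplus_{i=1}^{d/2} \begin{pmatrix} 0 & d_i \\ -d_i & 0 \end{pmatrix}, \qquad d_1 \mid d_2 \mid \dots \mid d_{d/2}.
\]
The rank-$(d-2)$ condition mod $p$ forces $d_1, \dots, d_{d/2-1}$ to be units of $\Z_p$, while $d_{d/2}$ has positive $p$-adic valuation $e$. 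Using $\det M = \mathrm{Pf}(M)^2$ for antisymmetric $M$ together with $\mathrm{Pf}(P M P^T) = \det(P)\,\mathrm{Pf}(M)$, one reads off that $e$ is the $p$-adic valuation of the Pfaffian of $M(\beta_{1,1})$, which up to the customary square-root convention is the invariant appearing in the statement as $v_p(\det M(\beta_{1,1}))$.

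Next, I would translate the congruence $\g M(\beta_{1,1}) \equiv 0 \pmod{p^{r_1}}$ via the substitution $\g \mapsto \g P^{-1}$; this is a $\Z_p$-module automorphism of $\Z_p^d$ and hence preserves indices of sublattices. The transformed system decouples into $d/2$ independent $2 \times 2$ subsystems, one per block of the normal form. Each of the $d/2-1$ hyperbolic blocks (with $d_i$ a unit) forces its two coordinate entries of $\g$ into $p^{r_1}\Z_p$, contributing index $p^{2r_1}$ apiece. The single degenerate block (with $d_{d/2}$ of valuation $e$) only forces its two entries into $p^{\max\{0,\, r_1 - e\}}\Z_p$, contributing index $p^{2\max\{0,\, r_1-e\}} = p^{2(r_1 - \min\{r_1,\, e\})}$.

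Multiplying these contributions yields
\[
w'(\Lamb, 1, r_1) \;=\; (d-2)r_1 + 2\bigl(r_1 - \min\{r_1,\, e\}\bigr) \;=\; d r_1 - 2\min\{r_1,\, e\},
\]
which is the claimed formula. The main technical obstacle is verifying the skew-symmetric congruence normal form over $\Z_p$: one must check that simultaneous elementary row and column operations (encoded by a single $P \in \GL_d(\Z_p)$ acting as $M \mapsto P M P^T$) can concentrate the entire non-unit $p$-adic content of $M(\beta_{1,1})$ into a single $2 \times 2$ block. This is precisely where the rank-$(d-2)$ hypothesis at a smooth point of the Pfaffian enters: Lemma \ref{rank} guarantees a $(d-2) \times (d-2)$ principal minor with unit determinant, which can then be used to sweep out the remaining two rows and columns via congruence, leaving only the deep $2 \times 2$ block carrying all the $p$-adic weight.
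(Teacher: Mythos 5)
Your proof is correct and follows essentially the same route as the paper's: both conjugate $M(\beta_{1,1})$ over $\Z_p$ into a block-diagonal skew-symmetric normal form with $d/2-1$ unit hyperbolic blocks $J$ and one $2\times 2$ block carrying the entire $p$-adic content, then read off the index of the kernel block by block. One small reassurance: the paper's preamble defines $\det$ as $\mathrm{det}^{\frac{1}{2}}$ (i.e.\ the Pfaffian), so your $e=v_p(\mathrm{Pf}\,M(\beta_{1,1}))$ is literally the $v_p(\det M(\beta_{1,1}))$ appearing in the statement, with no residual square-root ambiguity.
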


We include his proof here, since this is the template we shall use later on for a more complicated case analysis.
\begin{proof}
Choose $\beta_1\in \P^{d'-1}(\Z_p)$ such that $\det M(\beta_{1,1})
\equiv 0 \mod p,$ and $\overline{\beta_{1,1}} =\xi\in\mathfrak{P}_G.$
Let $J= \begin{array}({ c c  })0 &1\\ -1&0
\end{array}.$ Then we can choose local coordinates such that
around any of the $\n_1(p)$ points of the Pfaffian hypersurface
mod $p$ the congruence conditions look like
\begin{equation*}
\g\left( \begin{array}{ c c }0 &\det M(\beta_{1.1})\\ -\det
M(\beta_{1.1})&0 \end{array}, J, \dots,J\right) \equiv 0 {\mod
p^{r_1}}.\\
\end{equation*}
It follows that $w'(\Lamb,1,r_1)=
dr_1-2\min\{r_1,v_p(\det M(\beta_1))\}$, as claimed.
\end{proof}

Let $\Pi$ be a $(i_1-1)$-plane on the Pfaffian, and $\Lamb$ a lattice of type $J_1=\{ i_1\}$, for $i_1>1$, such that $\Pi$ is the intersection of the Pfaffian with the flag associated to $\Lamb$, i.e.  
$\<\ol{\beta_{i_1,1}},\dots,\ol{\beta_{i_1,i_1}}\>=\Pi$

As in Section \ref{flags}, we can consider the columns of $\alpha$ where 
$\alpha G_\nu=\beta G_\nu B$ instead $\beta$ and so we are required to compute the index
of the kernel  of
\begin{equation*}
  \g M(\alpha_{i_1,k})\equiv 0 \mod p^{r_{i_1}},
\end{equation*}
for $k=1,\dots,i_1$.
We can conveniently write this as the augmented matrix
\begin{equation}\label{block}
\g
\left(M(\alpha_{i_1,1})|M(\alpha_{i_1,2})|\dots|M(\alpha_{i_1,i_1})\right)\equiv
0\mod p^{r_{i_1}}.
\end{equation}

We need to consider separately all the possible components of the Fano variety $F_{i_1-1}(\mathfrak{P}_G)$.
The first thing to consider is the rank of the system of
congruences, which is determined by the number of linearly independent column vectors of the augmented matrix.
By Lemma \ref{rank}, the rank of the augmented matrix is always $d$, $d-1$ or
$d-2$, so that corank is either 0,1 or 2, respectively.

In fact, the corank is 2 only if the intersection is a single point. The next lemma demonstrates that the corank of the augmented matrix is either 0 or 1 for lines and higher dimensional subspaces.

\begin{lemma}
Let $\langle\overline{\alpha_1},\dots,\overline{\alpha_{i_1}}\rangle\in\mathfrak{P}_G$, for $i_1\geq 2$, then set of congruences  
\begin{equation*}
\g
\left(M(\alpha_{i_1,1})|M(\alpha_{i_1,2})|\dots|M(\alpha_{i_1,i_1})\right)\equiv
0\mod p^{r_{i_1}}
\end{equation*}
has corank 1 or 0.
\end{lemma}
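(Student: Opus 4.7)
The plan is to argue by contradiction. Suppose the augmented matrix has corank at least $2$. Then there exist two linearly independent vectors $\g_1,\g_2\in\F_p^d$ whose span $K$ lies in the left-kernel of every $M(\alpha_{i_1,k})$, and the linearity of $M$ in $\mathbf{y}$ extends this to $K\subseteq\ker M(\xi)$ for every $\xi$ in the linear span $\Pi=\<\alpha_{i_1,1},\dots,\alpha_{i_1,i_1}\>$. Since the projectivization of $\Pi$ lies inside the smooth hypersurface $\mathfrak{P}_G$, Lemma \ref{rank} forces $\dim\ker M(\xi)=2$ at every nonzero $\xi\in\Pi$, so in fact $\ker M(\xi)=K$ throughout.

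Now I choose a basis of $\F_p^d$ whose first two vectors span $K$. For each $\xi\in\Pi$, the condition $K\subseteq\ker M(\xi)$ kills the top two rows of $M(\xi)$ and, by antisymmetry, the leftmost two columns, so in this basis $M(\xi)$ is block-diagonal with a zero $2\times 2$ block and a $(d-2)\times(d-2)$ antisymmetric block $A(\xi)$, which is invertible by the previous paragraph. The assignment $\xi\mapsto A(\xi)$ is a linear map on $\Pi$, and it is injective: otherwise some nonzero $\xi_0\in\Pi$ would have $M(\xi_0)=0$, and then the identity $\mathrm{Pf}(M(\xi_0+\eta))=\mathrm{Pf}(M(\eta))$ (from linearity of $M$) shows that $\mathrm{Pf}(M)$ vanishes to order $d/2\geq 2$ at $\xi_0$, contradicting smoothness of $\mathfrak{P}_G$. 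Projectivizing thus embeds $\P(\Pi)$ into the projective space of $(d-2)\times(d-2)$ antisymmetric matrices, with image disjoint from the Pfaffian locus $\{\mathrm{Pf}(A)=0\}$.

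Since $i_1\geq 2$, $\P(\Pi)$ has projective dimension at least $1$ and therefore contains a projective line, whose image under the embedding is a projective line in the target. As $d\geq 4$ (the Heisenberg case $d=2$ is handled separately), the Pfaffian locus is a hypersurface of positive degree $(d-2)/2\geq 1$, so by B\'ezout this image line must meet $\{\mathrm{Pf}(A)=0\}$ in at least one geometric point over $\ol{\F}_p$. Pulling back produces a nonzero $\xi\in\Pi(\ol{\F}_p)$ with $A(\xi)$ degenerate, whence $\dim\ker M(\xi)>2$; but $\xi\in\mathfrak{P}_G$ is still a smooth point, which contradicts Lemma \ref{rank}. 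Hence the corank of the augmented matrix is $0$ or $1$.

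The hardest step I expect is securing the injectivity of $\xi\mapsto A(\xi)$ on $\Pi$ via smoothness, since the whole argument pivots on translating the positive-dimensional subspace $\Pi$ into a positive-dimensional projective family of nondegenerate antisymmetric matrices so that B\'ezout against the Pfaffian of the smaller $(d-2)\times(d-2)$ block can force a contradiction; once this is in place, the remainder is a clean invocation of intersection theory and Lemma \ref{rank}.
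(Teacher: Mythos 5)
Your proof takes a genuinely different route from the paper's. The paper reduces to the case of lines ($i_1=2$), picks explicit homogeneous coordinates $\sigma$ and $\tau$ for two points spanning a line on the Pfaffian, and argues by hand (via antisymmetry and the placement of unit entries) that at least one column of $M(\tau)$ is linearly independent from those of $M(\sigma)$. Your argument is more conceptual: assuming a common $2$-dimensional left kernel $K$, you block $M(\xi)$ into a zero $2\times 2$ corner plus an antisymmetric $(d-2)\times(d-2)$ block $A(\xi)$, show $\xi\mapsto A(\xi)$ is an injective linear map on $\Pi$ (using smoothness), and then use B\'ezout on the image line against the smaller Pfaffian $\{\mathrm{Pf}(A)=0\}$ to extract a geometric point of $\Pi$ where $\ker M$ jumps, contradicting Lemma~\ref{rank}. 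This handles all $i_1\geq 2$ at once without the paper's reduction to lines, which is a real advantage of the method.

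There is, however, a gap in the case $d=4$, which is admissible once $d=2$ is excluded. When $d=4$, $A(\xi)$ is a $2\times 2$ antisymmetric matrix, and the projective target $\P\bigl(\bigwedge^2\F_p^2\bigr)\cong\P^0$ is a single point. No line exists in $\P^0$, so the sentence ``this image line must meet $\{\mathrm{Pf}(A)=0\}$'' is vacuous; the B\'ezout step is simply not applicable. What actually produces the contradiction in this case is already in your injectivity step: an injective linear map from $\Pi$ (of dimension $i_1\geq 2$) into the one-dimensional space of $2\times 2$ antisymmetric matrices cannot exist, so some nonzero $\xi_0$ must satisfy $A(\xi_0)=0$, forcing $M(\xi_0)=0$ and contradicting smoothness. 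You should handle $d=4$ explicitly by this dimension count rather than appealing to B\'ezout, and reserve B\'ezout for $d\geq 6$.

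A smaller point: Lemma~\ref{rank} and the accompanying remark about a $(d-2)\times(d-2)$ unit minor are stated for $\Q$ and $\Z_p$, but your final step pulls back a geometric point $\xi\in\Pi(\ol\F_p)$. You should note that, for almost all $p$, smoothness of $\mathfrak{P}_G$ over $\ol\F_p$ and the corank-$2$ conclusion persist at $\ol\F_p$-points; as written, the pullback invokes Lemma~\ref{rank} in slightly larger generality than it is stated.
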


\begin{proof}
It is enough to prove the lemma for the case of lines, that is $i_1=2$, since the rank of the augmented matrix in (\ref{block}) is minimal in the case of lines for general $i_1\geq 2$.
From Lemma \ref{smthpt}
we know that around a smooth point the corank is 2. Choose homogeneous coordinates for this point (as a vector),
and choose coordinates for another smooth point on the Pfaffian such that these two points span a line on the
Pfaffian. These vectors are linearly independent and we can add multiples of one to the other and we can also
multiply them by units. Thus without loss of generality (up to a permutation of entries) our vectors look like
$\sigma=(\sigma_1,\dots,\sigma_{d-2},0,1)$ and $\tau=(\tau_1,\dots,\tau_{d-2},1,0)$, where $\sigma_i,\tau_i\in p\Z_p/(p^{r_{2}})$.
We may assume that we are not in the
diagonal case. Indeed, if the matrix of
relations is a permutation of a diagonal matrix, then $G$ is a direct product product of Heisenberg groups, in which case the
Pfaffian is not smooth, so we
can exclude this case. Then the corank of $M(\sigma)$ is 2 and we only need to show that at
least one column of $M(\tau)$ is linearly
independent from the columns of $M(\sigma)$.  But this is indeed the case, since the matrix of relations
is anti-symmetric, so the unit entry in $\tau$
will necessarily be in a different row to any unit entry in $M(\sigma)$. Thus the corank is at most 1,
and if we happen to have two more linearly
independent vectors then the corank is 0.
\end{proof}

First let us assume the corank is 0. Here the augmented matrix contains a
$p$-adic unit minor of size $d\times d$ , and so the desired result follows, since
the congruences will reduce to $$\g\equiv 0 \mod p^{r_{i_1}}$$ as in Case I: The intersection is empty, hence
$w'(\Lamb,i_1,r_{i_1})=w'(i_1,r_{i_1})=dr_{i_1}$.

\begin{remark}\label{corank 0} It is for these components $F_{\i_i}(\mathfrak{P}_G)$ for which we will set $\delta_{\i_i}=0$, as in statement of Theorem \ref{main theorem general}.
\end{remark}

Finally, let us assume we are in the corank 1 case.
As in the proof of Lemma \ref{smthpt},
we can use row and column operations to get each of the $M(\alpha_{i_1,k})$ for $k\in\{1,\dots,i_1\}$ into the form
$$\left(
\begin{array}{ c c }0 &\det M(\alpha_{i_1,k})\\ -\det M(\alpha_{i_1,k})&0
\end{array}, J,\dots,J\right),$$ where  $J= \begin{array}({ c c  })0 &1\\ -1&0
\end{array}.$

In order to determine
the index of $\g$ we need to know the rank, the determinant and the elementary
divisor type of the matrices $M(\alpha_{i_j,k})$, and these do not depend on the particular basis chosen. So it suffices to solve 
\begin{align*}
  \g \left( \begin{array}{ c c }0 &\det
M(\alpha_{i_1,1})\\ -\det M(\alpha_{i_1,1})&0 \end{array},
J,\dots,J\right)&\equiv 0 \mod p^{r_{i_1}}\\
  \g \left( \begin{array}{ c c }0 &\det
M(\alpha_{i_1,2})\\ -\det M(\alpha_{i_1,2})&0 \end{array},
J,\dots,J\right)&\equiv 0 \mod p^{r_{i_1}}\\ \vdots & \\ \g \left(
\begin{array}{ c c }0 &\det M(\alpha_{i_1,i_1})\\ -\det
M(\alpha_{i_1,i_1})&0 \end{array}, J,\dots,J\right)&\equiv 0 \mod
p^{r_{i_1}}
\end{align*} simultaneously, and we read off
\begin{equation}\label{pure weight 2}
w'(\Lamb,i_1,r_{i_1})=dr_{i_1}-\min\{r_{i_1},v_p(\det(M(\alpha_{i_1,1}))),v_p(\det(M(\alpha_{i_1,2}))),\dots,
v_p(\det(M(\alpha_{i_1,i_1})))\}.\end{equation}
Since we are assuming the corank is 1, we do not have the coefficient two in the minimum-function that we had in Lemma \ref{smthpt}.

The different components of the Fano variety may give different weight functions. This is either due to the rank condition, or the possibility that the components have different dimensions.  We
shall denote the $\F_p$-points on the component $F_{\i_{i_1}}$ by $\n_{\i_{i_1}}$. If a component has corank 0, we get the same weight function as the empty intersection case, and then we set $\delta_{\i_{i_1}}=0$.

\subsection*{Case III: The general intersection}

Here we assume that the intersection of the flag of type $I=\{i_1,\dots,i_l\}_<$ with the Pfaffian is not just a hyperplane but a
flag with subspaces contained on the Pfaffian. For simplicity, let us exclude the corank 0 cases, since these again will give the same weight function as the case of an empty intersection.
As before, we shall consider the congruence conditions separately for different lattices lifting fixed flags on the Pfaffian. Moreover, as in the case of a hyperplane intersection we consider the points
$\alpha_{i_j,k}$ defined in Section \ref{flags} equation (\ref{affpts}), instead of the $\beta_{i_j,k}.$
In
order to solve the equations against the same modulus, we 
consider the set of equations
\begin{align*}
p^{r_{i_1}+\dots+r_{i_{j-1}}}\g  M(\alpha_{i_j,k})\equiv 0 \mod
p^{r_{i_1}+\dots+r_{i_{j-1}}+r_{i_j}+\dots+r_{i_l}},
\end{align*}
where $i_j\in J_1$ and $k\in\{i_{j-1}+ 1,\dots,i_j\}$.

For each fixed $i_j$, we get an augmented matrix as in the hyperplane case:
\begin{align*}
p^{r_{i_1}+\dots+r_{i_{j-1}}}\g(
M(\alpha_{i_j,i_{j-1}+1})|\dots|M(\alpha_{i_j,i_j}))\equiv 0 \mod
p^{r_{i_1}+\dots+r_{i_{j-1}}+r_{i_j}+\dots+r_{i_l}}.
\end{align*}
With a suitable change of basis, each component of the augmented matrix reduces to
\begin{equation}\label{modulus}
p^{r_{i_1}+\dots+r_{i_{j-1}}}\g \left(\begin{array}({ c c}) 0&
\det M(\alpha_{i_j,k})\\ -\det M(\alpha_{i_j,k})
&0\end{array},J,\dots,J \right) \equiv 0 \mod
p^{r_{i_1}+\dots+r_{i_j}+\dots+r_{i_l}}.
\end{equation}
Moreover, as we vary $i_j\in J_1$, we get the whole system of
congruences consisting of augmented matrices  
of the same form as those which arose in the case $J_1=\{i_1\}$. As before, the elementary row and column operations do not change
the elementary divisor type of the matrices $M(\alpha_{i_j,k})$, so without loss of
generality we may assume all of the matrices are of the form above (\ref{modulus}). We can then read off the
weight function:
\begin{align*}
&w'(\Lamb,J_1,r_{J_1})=d(r_{i_1}+\dots+r_{i_l})\\&-\min\{r_{i_1}+\dots+r_{i_l},v_p(\det
M(\alpha_{i_1,1})),\dots,v_p(\det M(\alpha_{i_1,i_1})),\\ &r_{i_1}+v_p(\det
M(\alpha_{i_2,i_1+1})),\dots,r_{i_1}+v_p(\det M(\alpha_{i_2,i_2})),\dots,\\&
r_{i_1}+\dots+r_{i_{l-1}}+v_p(\det
M(\alpha_{i_l,i_{l-1}+1})),\dots,r_{i_1}+\dots+r_{i_{l-1}}+v_p(\det
M(\alpha_{i_l,i_l}))\}.
\end{align*}

However, if the flag contains the subspaces of points, i.e. if $J_1=\{1,i_2,\dots,i_l\}$, then
\begin{equation}\label{mixed weight}
\begin{split}
&w'(\Lamb, J_1, r_{J_1})=d(r_1+\dots+r_{i_l})-\min\{r_1,v_p(\det
M(\alpha_{1,1}))\}\\&
-\min\{r_1+\dots+r_{i_l},v_p(\det
M(\alpha_{1,1})),r_1+v_p(\det M(\alpha_{i_2,2})),\dots, r_1+v_p(\det M(\alpha_{i_2,i_2}))
,\\ &r_1+r_{i_2}+v_p(\det
M(\alpha_{i_3,i_2+1})),\dots,r_1+r_{i_2}+v_p(\det M(\alpha_{i_3,i_3})),\dots,\\&
r_1+r_{i_2}+\dots+r_{i_{l-1}}+v_p(\det
M(\alpha_{i_l,i_{l-1}+1})),\dots,r_1+r_{i_2}+\dots+r_{i_{l-1}}+v_p(\det
M(\alpha_{i_l,i_l}))\}.
\end{split}\end{equation}

\section{$p$-adic valuations of determinants}\label{valuations}

In order to get an explicit expression for the weight function $w'(\Lamb)$ we are left with the determination of $v_p(\det M(\alpha_{i_j,k})).$
This is easy from the following lemma:

\begin{lemma}\label{valuation}
Let $\alpha_{i_j,k}$ be the columns of $\alpha$ that we constructed in Section \ref{flags}, equation (\ref{affpts}), with the appropriate reductions.  
Then \begin{equation*}v_p(\det
M(\alpha_{i_j,k}))=\min\{v_p(a_1),\dots,v_p(a_{c_{\i_k}})  \},
\end{equation*} where $c_{\i_k}$ is the codimension of the component $F_{\i_k}$ of $F_{k-1}(\mathfrak{P}_G)$ in $\G(k-1,d'-1)$.
\end{lemma}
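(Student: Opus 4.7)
The $(k-1)$-plane $\Pi = \langle \overline{\beta_{i_1,1}},\ldots,\overline{\beta_{i_j,k}}\rangle$ lies in $V_{i_l}\subseteq \mathfrak{P}_G$ by the flag property, so $\Pi$ is a point of some component $F_{\i_k}$ of $F_{k-1}(\mathfrak{P}_G)$. My plan is to use the local affine chart on $\G(k-1,d'-1)$ around $\Pi$ from Section \ref{flags}: nearby $(k-1)$-planes are the row spans of the matrix (\ref{affine}), parametrised by $(a_{ij})_{i\leq k,\,j\leq d'-k}$ with $\Pi$ at the origin. The $\Q_p$-plane spanned by $\beta_{i_1,1},\ldots,\beta_{i_j,k}$ reduces modulo $p$ to $\Pi$, so its image in this chart has coordinates $a_{ij}\in p\Z_p$, which are explicit linear expressions in the entries $b_{i_{j_1},n}^{i_{j_2},m}$ of $\beta$ after the base change bringing the plane to the normal form (\ref{affine}).

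The heart of the argument is the restriction of the defining polynomial of $\mathfrak{P}_G$ to this universal family of planes. Writing $v_1(a),\ldots,v_k(a)$ for the rows of (\ref{affine}), set
\begin{equation*}
P(a,\mu) = \det M\Bigl(\sum_{i=1}^k \mu_i v_i(a)\Bigr),
\end{equation*}
a polynomial of bidegree $(d/2,d/2)$ in $(a,\mu)$. Since $\Pi\subseteq\mathfrak{P}_G$, we have $P(0,\mu)\equiv 0$, so the Taylor expansion in $a$ begins at order one, and the coefficients of the monomials $\mu^\gamma$ in its linear part generate the ideal of $F_{\i_k}$ at $\Pi$. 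These linearised generators span a $c_{\i_k}$-dimensional subspace of the Zariski cotangent space, and after a linear change of the Grassmannian coordinates I may take a basis of this subspace to be $a_1,\ldots,a_{c_{\i_k}}$. Substituting the Borel units $\mu=(\lambda_1,\ldots,\lambda_k)$ from (\ref{borel}) that produce $\alpha_{i_j,k}$ in (\ref{affpts}) then yields
\begin{equation*}
\det M(\alpha_{i_j,k}) = \sum_{s=1}^{c_{\i_k}} Q_s(\lambda)\,a_s + O(a^2),
\end{equation*}
where each $Q_s$ is a nonzero polynomial in $\mu$. Because the $\lambda_i$ may be chosen generically among $p$-adic units without altering $w'(\Lamb)$, I may assume every $Q_s(\lambda)$ is a unit and no cancellation occurs among the $Q_s(\lambda)a_s$; since $a_s\in p\Z_p$, the higher-order term is dominated, and I obtain $v_p(\det M(\alpha_{i_j,k}))=\min_s v_p(a_s)$ as claimed.

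The main obstacle is justifying that the first-order defining equations of $F_{\i_k}$ at $\Pi$ really span the full $c_{\i_k}$-dimensional subspace of the cotangent space, equivalently that $F_{\i_k}$ is either smooth at $\Pi$ or at least a local complete intersection of the expected codimension in the Grassmannian. In the generic setting underlying Theorem \ref{main theorem general}, this follows from the Debarre--Manivel result already cited in Section \ref{functeqn}; in the merely smooth case one would instead work with a regular sequence generating the ideal of $F_{\i_k}$. A secondary technical check is that the generic choice of the $\lambda_i$ does not affect the index computation in $w'(\Lamb)$, which is harmless because only the $p$-adic valuation of $\det M(\alpha_{i_j,k})$ enters the weight function.
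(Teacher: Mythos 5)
Your approach is genuinely different from the paper's, and the difference matters: the paper's argument does not Taylor-expand in the affine chart coordinate $a$ at all. Since $M$ is linear, $\mathrm{Pf}\,M(\alpha_{i_j,k}) = \mathrm{Pf}\bigl(\sum_n\lambda_n M(\beta_{\cdot,n})\bigr)$ is an \emph{exact} homogeneous polynomial of degree $\frac{d}{2}$ in the Borel units $\lambda_n$, whose coefficients $a_\varps$ are precisely the local defining polynomials of the Fano variety on the Grassmannian evaluated at the Pl\"ucker coordinates of the relevant plane. The lower bound $v_p(\det M(\alpha_{i_j,k}))\geq\min_\varps v_p(a_\varps)$ then follows just from the ultrametric inequality, and equality is forced by a contradiction: if strict inequality held for every unit tuple $\lambda$, then after dividing by $p^{\kappa}$ (with $\kappa=\min_\varps v_p(a_\varps)$) one would get a degree-$\frac{d}{2}$ form over $\F_p$ vanishing on all of $(\F_p^\times)^k$, so (for $p$ large) each reduced coefficient $\bar a'_\varps$ vanishes, contradicting the definition of $\kappa$. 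Nothing about the local structure of the Fano variety beyond ``these coefficients are its equations'' is needed. Your route, by contrast, rests on the first-order part $\sum_s Q_s(\lambda)\,a_s$ carrying the whole $p$-adic weight, and that needs two things you do not establish: (i) that the differentials of the $a_\varps$ at $\Pi$ span a $c_{\i_k}$-dimensional subspace, i.e.\ that $F_{\i_k}$ is smooth (or an LCI of expected codimension) at $\Pi$ --- you flag this yourself, but the paper's lemma assumes no such thing and, as the proof shows, does not need it; and (ii) that the reductions $\bar Q_s$ are linearly independent over $\F_p$, so a generic unit tuple $\lambda$ avoids all cancellation --- you assert this but give no argument, and it does not follow from the Debarre--Manivel result, which is about the Fano variety itself and not about the particular Jacobian you are linearizing. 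Note also that your $a_s$ (a basis of local parameters after a linear change of Grassmannian coordinates) and the lemma's $a_1,\dots,a_{c_{\i_k}}$ (the values of the defining polynomials of $F_{\i_k}$) are different objects, coinciding only up to first order under the smoothness you have not secured. Your remark that the $O(a^2)$ remainder is dominated is correct --- each quadratic term has valuation $\geq 2\kappa > \kappa$ once $\kappa\geq1$ --- but it is the linearization step itself, not the tail, that carries the gap.
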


\begin{proof} % KEKSI VIELA LAMBDALLE INDEKSOINTI.... K EI HYVA, EIKA M MYOSKAAN
First we note that we can expand  $\det
M(\alpha_{i_j,k})$ as a bihomogeneous polynomial of bidegree $(\frac{d}{2},\frac{d}{2})$ 
in the entries of $\alpha_{i_j,k},$ and in the $\lambda_n$, where $\lambda_n$ are the $p$-adic unit entries of the matrix $B$ (see equation (\ref{borel})) 
which we used in order to move between lattices lifting the same flag.
Writing this as a polynomial in the $\lambda_n$, the coefficients which appear are the defining ideals of the Fano variety on the Grassmannian.
Furthermore, the $p$-adic valuation of any monomial $v_p(\lambda_1^{\varps_1}\cdots\lambda_n^{\varps_n})=1,$
where $\sum_{i=1}^n \varps_i=\frac{d}{2}$. Hence
$$v_p(\det M(\alpha_{i_j,k}))\geq
\min\{v_p(a_1),\dots,v_p(a_{c_{\i_k}})  \},$$ 
where $a_i$ are polynomials in the entries of $\alpha$.
To see that equality holds,
suppose for a contradiction that the left hand side is strictly bigger
than the right hand side, so we have the
congruence
$$\sum_{\varps_1+\dots+\varps_n=\frac{d}{2}}\lambda_{1}^{\varps_1}\dots\lambda_{n}^{\varps_n}
a_{\varps_1,\dots,\varps_n} \equiv 0\mod p^{\kappa+1},$$ where $\kappa=\min\{v_p(a_1),\dots,v_p(a_{c_{\i_k}})  \}$.
Then
$$p^{\kappa}(\sum_{\varps_1+\dots+\varps_n=\frac{d}{2}}\lambda_{1}^{\varps_1}\dots\lambda_{n}^{\varps_n} a'_{\varps_1,\dots,\varps_n}) \equiv 0 \mod
p^{\kappa+1},$$ where $p^\kappa a'_{\varps_1,\dots,\varps_n}=a_{\varps_1,\dots,\varps_n}$ and thus $$\sum_{\varps_1+\dots+\varps_n=\frac{d}{2}}\lambda_{1}^{\varps_1}\dots\lambda_{n}^{\varps_n} a'_{\varps_1,\dots,\varps_n}
 \equiv 0
{\mod p}$$ for all possible $p$-adic units $\lambda_n$.
It follows that 
$$a'_{\varps_1,\dots,\varps_n}\equiv 0 {\mod p}$$ for all possible choices of $\{\varps_i\}$ with $\sum_{i=1}^n \varps_i=\frac{d}{2}$.
But this is a contradiction, since at least one of the $a'_{\varps_1,\dots,\varps_n}$ is a
$p$-adic unit.
\end{proof}

We have set up the new coordinates such that
the set of valuations  $v_p(\det M(\alpha_{i_j,k_1}))$ is
contained in the set of valuations $v_p(\det M(\alpha_{i_j,k_2}))$ whenever
$k_1<k_2$.
By applying Lemma \ref{valuation}, we can expand
the determinants in the weight function.

We start with the case $I=\{i_1\}$ so the intersection of the flag variety with the Pfaffian hypersurface is a hyperplane. In view of, (\ref{pure weight 2}) we need to compute
\begin{equation}\label{valuaatio}\min\{r_{i_1},v_p(\det(M(\alpha_{i_1,1}))),v_p(\det(M(\alpha_{i_1,2}))),\dots,
v_p(\det(M(\alpha_{i_1,i_1})))\}.\end{equation} From the definition of the $\alpha_{i_j,k}$,
we get
\begin{equation*}
v_p(\det(M(\alpha_{i_1,k})))=\min\{v_p(a_1),v_p(a_2),\dots,v_p(a_{c_{\i_k}})\}
\end{equation*} for each $k\in\{1,\dots,i_1\}$, where the $a_n$ are polynomials in the coordinates
of the points $\alpha_{i_1,k}$, and hence in the coordinates of the
$\beta_{i_1,k}$. Substituting in (\ref{valuaatio}) and cancelling the
extra minima and any terms that appear more than twice, we obtain
\begin{equation*}
w'(\Lamb,i_1,r_{i_1})=dr_{i_1}-\min\{r_{i_1},v_p(a_1),\dots,v_p(a_{c_{\i_{i_1}}})\}.
\end{equation*}

Finally, we consider the case of a general lattice with weight function as in
(\ref{mixed weight}). Here we need to expand
\begin{align}&\min\{r_{i_1}+\dots+r_{i_l},v_p(\det
M(\alpha_{i_1,1})),\dots,v_p(\det M(\alpha_{i_1,i_1})),\nonumber\\ &r_{i_1}+v_p(\det
M(\alpha_{i_2,i_1+1})),\dots,r_{i_1}+v_p(\det M(\alpha_{i_2,i_2})),\label{bigmin}\\ &
\dots,r_{i_1}+\dots+r_{i_{l-1}}+v_p(\det
M(\alpha_{i_l,i_{l-1}+1})),\dots,r_{i_1}+\dots+r_{i_{l-1}}+v_p(\det
M(\alpha_{i_l,i_l}))\}\nonumber.\end{align}

Again, by expanding out the valuations of the determinants, we set
\begin{equation*}
v_p(\det(M(\alpha_{i_j,k})))=\min\{v_p(\hat{a}_1),v_p(\hat{a}_2),\dots,v_p(\hat{a}_{c_{\i_{i_{j-1}}}}),v_p(a_{c_{\i_{i_{j-1}+1}}}),\dots,v_p(a_{c_{\i_k}})\}
\end{equation*} for each $k\in\{i_{j-1}+1,\dots,i_j\},$
where $\hat{a}_n$ denotes the reduction
mod $p^{r_{i_1}+\dots+r_{i_{j-1}}}$ as explained in Section
\ref{flags}, in particular in connection with equation (\ref{affpts}).
Now we can substitute the above expansions back inside (\ref{bigmin}), and cancel the redundant minima inside the expression. We can also cancel all the sums with reductions in them, as the minimum cannot be attained at
these points
because
\begin{equation}\label{hat}r_{i_1}+\dots+r_{i_{j-1}}+v_p(\hat{a}_{n})\geq
v_p(a_{n})\end{equation} for any $n\in\{1,\dots,c_{\i_{i_{j-1}}}\}.$

Let 
$\mathbf{r_{i_1}+r_{i_2}+\dots+r_{i_{j-1}}+a_{c_{\i_{i_j}}}}$ denote the $(c_{\i_{i_j}}-c_{\i_{i_{j-1}})}$-tuple
$$(r_{i_1}+r_{i_2}+\dots+r_{i_{j-1}}+v_p(a_{c_{\i_{i_{j-1}+1}}}),\dots ,r_{i_1}+r_{i_2}+\dots+r_{i_{j-1}}+v_p(a_{c_{\i_{i_j}}})),$$
where $c_{\i_{i_j}}$ is the codimension of $F_{\i_{i_j}}$  in $\G(i_j-1,d'-1)$. Then our weight function becomes
\begin{align*}
&w'(\Lamb,J_1,r_{J_1})\\&=d(r_{i_1}+\dots+r_{i_l})\\&-\min\{r_{i_1}+\dots+r_{i_l},v_p(a_{c_{\i_{i_1}}}),
\mathbf{r_{i_1}+a_{c_{\i_{i_2}}}},\mathbf{r_{i_1}+r_{i_2}+a_{c_{\i_{i_3}}}},\dots,
\mathbf{r_{i_1}+\dots+r_{i_{l-1}}+a_{c_{\i_{i_l}}}} \}
\end{align*}
where the highest dimensional linear subspace contained in the
Pfaffian, or the dimension of the subspace in a flag where the corank of the augmented matrix of relations changes to $0$, is $i_l-1$. Recall Remark \ref{corank 0}.

By applying Lemma \ref{reducing weights}, we deduce that the general weight function for a lattice
$\Lamb$ of type $I=\{i_1, \dots,i_m\}_<$ is given by
\begin{align*}
&w'(\Lamb,I,r_I)\\&=d(r_{i_1}+\dots+r_{i_m})\\&-\min\{r_{i_1}+\dots+r_{i_l},v_p(a_{c_{\i_{i_1}}}),
\mathbf{r_{i_1}+a_{c_{\i_{i_2}}}},\mathbf{r_{i_1}+r_{i_2}+a_{c_{\i_{i_3}}}},\dots,\mathbf{r_{i_1}
+\dots+r_{i_{l-1}}+a_{c_{\i_{i_l}}}}
\}
\end{align*}
where the highest dimensional linear subspace contained in the
Pfaffian, or the dimension of the subspace in a flag where the corank 
of the augmented matrix of relations changes to $0$, is $i_l-1$. Recall Remark \ref{corank 0}.

If $1\in I$ then we have 
\begin{align*}
&w'(\Lamb,I,r_I)\\&=d(r_{i_1}+\dots+r_{i_m})-\min\{r_1,v_p(a_1)\}\\&-\min\{r_{i_1}+\dots+r_{i_l},v_p(a_{c_{\i_{i_1}}}),
\mathbf{r_{i_1}+a_{c_{\i_{i_2}}}},\mathbf{r_{i_1}+r_{i_2}+a_{c_{\i_{i_3}}}},\dots,\mathbf{r_{i_1}
+\dots+r_{i_{l-1}}+a_{c_{\i_{i_l}}}}
\}.
\end{align*}

\section{Decomposing the zeta function}\label{basic theory}

Now we can return to the main topic of the current paper.
Recall that, we are interested in the normal zeta function $$\zeta_G^\ns(s)=\sum_{H\ns_f G}|G:H|^{-s},$$ of a finitely generated, torsion-free,
class-two-nilpotent group  $G$. We shall now put the previous sections
back into this context.

The zeta function admits the following Euler product decomposition
$$\zeta_G^\ns(s)=\prod_{p\ prime} \zeta_{G,p}^\ns(s),$$ where $\zeta_{G,p}^\ns(s)$ counts subgroups of
finite $p$-power index only.

Let us again write $G=\Gamma\times Z^m$.
Let $\L$ be the corresponding Lie ring constructed as an image of $G$ under the $\log$-map using the Mal'cev correspondence.
Set $\L_p:=\L\otimes\Z_p$. Then for almost all primes $p$ we have from \cite{GSS} $$\zeta_{G,p}^\ns(s)=\zeta_{\L,p}^\ns(s)=
\zeta_{\L_p}^\ns(s).$$
Therefore, it suffices to count ideals in the associated Lie ring. 
Let $\L_p'$ denote the derived Lie ring.

\begin{lemma}\cite[Lemma 6.1]{GSS}
Suppose $\L_p/\L_p'\isom\Z_p^{d+m}$ and $\L_p'\isom \Z_p^{d'}$ as rings.
For each lattice $\Lambda'\leq
\L_{p}'$ put $X(\Lambda')/\Lambda'=Z(\L_{p}/\Lambda')$. Then
\begin{align*}
\zeta_{\L,p}^\ns(s)=
\zeta_{\L_p}^\ns(s)=&\zeta_{\Z_p^{d+m}}(s)\sum_{\Lambda'\leq
\L_p'} |\L_p':\Lambda'|^{d+m-s}|\L_p:X(\Lambda')|^{-s}\\ =&
\zeta_{\Z_p^{d+m}}(s)\zeta_p((d+d')s-d'(d+m))A(p,p^{-s}),
\end{align*}
where $$A(p,p^{-s})=\sum_{\underset{\Lamb
\mathrm{maximal}}{ \Lambda'\leq
\L_p'}}|\L_p':\Lambda'|^{d+m-s}|\L_p:X(\Lambda')|^{-s}.$$

\end{lemma}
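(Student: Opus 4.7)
The plan is to establish the two equalities in turn, both of which hinge on a careful parametrisation of the finite-index ideals of $\L_p$ using that $\L_p$ is class-two nilpotent. The key structural fact is that, because $\L_p'=[\L_p,\L_p]$ is central, the Lie bracket descends to a $\Z_p$-bilinear antisymmetric form
\begin{equation*}
B \colon \L_p/\L_p' \times \L_p/\L_p' \longrightarrow \L_p',
\end{equation*}
so that for any sublattice $\Lambda'\leq\L_p'$ one has $X(\Lambda')=\{x\in\L_p:B(\bar x,\L_p/\L_p')\subseteq \Lambda'\}$, which automatically contains $\L_p'$.

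For the first equality I would parametrise an arbitrary finite-index ideal $H$ of $\L_p$ by the pair $(\Lambda',\bar H)$ with $\Lambda'=H\cap \L_p'$ and $\bar H=(H+\L_p')/\L_p'$; the ideal condition $[H,\L_p]\subseteq H\cap \L_p' = \Lambda'$ translates exactly into $\bar H\leq X(\Lambda')/\L_p'$, while multiplicativity of the index gives $|\L_p:H|=|\L_p':\Lambda'|\cdot|\L_p/\L_p':\bar H|$. The fibres of the assignment $H\mapsto (\Lambda',\bar H)$ are torsors under $\Hom(\bar H,\L_p'/\Lambda')$, which has cardinality $|\L_p':\Lambda'|^{d+m}$ because $\bar H$ has full $\Z_p$-rank $d+m$. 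Factoring $|\L_p/\L_p':\bar H|=|\L_p:X(\Lambda')|\cdot|X(\Lambda')/\L_p':\bar H|$ and noting that $\bar H$ ranges over every finite-index sublattice of the free $\Z_p$-module $X(\Lambda')/\L_p'\cong\Z_p^{d+m}$, the inner sum collapses to a copy of $\zeta_{\Z_p^{d+m}}(s)$, which yields the first equality.

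For the second equality I would decompose every $\Lambda'\leq\L_p'$ uniquely as $p^k\Lambda$ with $\Lambda$ maximal in its homothety class and $k\geq 0$, so that $|\L_p':\Lambda'|=p^{kd'}|\L_p':\Lambda|$. The step I expect to be the main obstacle is the scaling identity
\begin{equation*}
|\L_p:X(p^k\Lambda)| = p^{dk}\,|\L_p:X(\Lambda)|,
\end{equation*}
which reflects non-degeneracy of $B$ on the rank-$d$ complement of the central part of $\L_p/\L_p'$ (guaranteed by the Pfaffian not being identically zero): passing from $\Lambda$ to $p^k\Lambda$ sharpens each of the $d$ independent congruences that define $X(\Lambda)$ by an additional factor $p^k$, raising the index by exactly $p^{dk}$. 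Substituting both scalings and summing the geometric series in $k$ gives $\sum_{k\geq 0}p^{k(d'(d+m)-(d+d')s)}=\zeta_p((d+d')s-d'(d+m))$, with the remaining sum over maximal $\Lambda$ being the definition of $A(p,p^{-s})$.
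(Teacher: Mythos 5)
The paper cites this lemma from \cite{GSS} without reproducing a proof, so there is no in-paper argument to compare against directly; your reconstruction is the standard one and is essentially correct. For the first equality you parametrise finite-index ideals $H$ by the pair $(\Lambda',\bar H)=(H\cap\L_p',(H+\L_p')/\L_p')$, note that normality is exactly the condition $\bar H\le X(\Lambda')/\L_p'$ (using class two, so $[H,\L_p]$ depends only on $\bar H$), count the $|\L_p':\Lambda'|^{d+m}$ ideals in each fibre, and sum over $\bar H$ inside the free rank-$(d+m)$ module $X(\Lambda')/\L_p'$ to produce $\zeta_{\Z_p^{d+m}}(s)$; for the second you write $\Lambda'=p^k\Lambda$ with $\Lambda$ maximal and sum the geometric series in $k$, matching what the paper itself asserts in the paragraph immediately following the lemma.

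The one place your sketch is too loose is the scaling identity $|\L_p:X(p^k\Lambda)|=p^{dk}\,|\L_p:X(\Lambda)|$. What is actually needed is that the injection $\bar\psi\colon \L_p/Z(\L_p)\hookrightarrow\Hom(\L_p/Z(\L_p),\L_p')$, $x\mapsto[x,\cdot]$, remain injective after reduction mod $p$ (equivalently, that its image be a saturated sublattice), since then $\bar\psi^{-1}(p^kW)=p^k\bar\psi^{-1}(W)$ for every submodule $W$ of the target, and the index jumps by exactly $p^{dk}$. Your phrase ``guaranteed by the Pfaffian not being identically zero'' only gives non-degeneracy over $\Q_p$; the mod-$p$ non-degeneracy needed here can fail at the finitely many primes dividing the relevant minors of the commutator matrix, even though the Pfaffian is nonzero over $\Q$ and both $\L_p'$ and $\L_p/\L_p'$ are free. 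This is harmless for the paper, which works throughout for almost all primes $p$, but the informal gloss about ``sharpening each of the $d$ independent congruences'' should be replaced by the saturation argument above, with the caveat that it is valid for almost all $p$ rather than for every prime at which the Pfaffian is nonzero.
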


This lemma allows us to restrict to lattices in the centre only. We
enumerate such lattices according to the elementary divisor
type of the lattice, as explained in Section \ref{correspondence}. It is enough to
consider only maximal lattices of $p$-power index, since  $\Lamb=p^{r_{d'}}\Lamb_{max}$ if $\Lamb$ is not maximal, where $\Lamb_{max}$ is maximal in its
class. Now $$|\L'_p:\Lambda'|=p^{d'r_{d'}}
|\L'_p:\Lambda'_{max}|$$ and
$$|\L_p:X(\Lambda')|=p^{dr_{d'}}|\L_p:X(\Lambda'_{max})|.$$ 

We define the weight functions
\begin{align*}w(\Lambda')&:=\log_p(|\L'_p:\Lambda'|)\\
w'(\Lambda')&:=\log_p(|\L_p:X(\Lamb)|),\end{align*} where $\Lamb$
is a maximal lattice. Then
\begin{equation*}
A(p,p^{-s })=\sum_{\underset{\Lamb
\text{maximal}}{ \Lambda'\leq
\L_p'}}
p^{(d+m)w(\Lamb)-s(w(\Lamb)+w'(\Lamb))}.
\end{equation*}

Note that the $w'(\Lamb)$ is precisely the function we considered in Section \ref{congruences}, while  $w(\Lamb)$ is easily read
off from the type of the lattice. Indeed, if the type of $\Lamb$ is $\nu=(I,r_I)$, then \begin{equation*}w(\Lamb)=\sum_{i\in I}ir_i.\end{equation*}

\section{Indexing}\label{indexing}

We can decompose the generating function $A(p,p^{-s})$ further to
run over lattices of fixed flag type, and write it as
\begin{equation}\label{generating general}
A(p,p^{-s})=\sum_{I\subseteq\{1,\dots,d'-1\}}A_I(p,p^{-s}),\end{equation}
where
\begin{equation}\label{actua
function general}A_I(p,p^{-s})=\sum_{\nu(\Lamb)=I}
p^{(d+m)w(\Lamb)-s(w(\Lamb)+w'(\Lamb))}\mu(\Lamb).\end{equation} Here
$\Lamb$ is a representative lattice of flag type $I$ and multiplicity $\mu(\Lamb)$, as in Proposition \ref{multiplicity}.

A
more subtle decomposition is needed in order to reveal the dependence on the underlying geometry. 

If the $(i_j-1)$-dimensional subspace of the flag of type $I=\{i_1,\dots,i_k\}_<$ is contained on the Pfaffian, we shall write $i_j^*$ in the indexing set
$I$ to indicate this fact. For example
$$A_{2^*}(p,p^{-s})=\sum_{\nu(\Lamb)={2^*}}
p^{(d+m)w(\Lamb)-s(w(\Lamb)+w'(\Lamb))}\mu(\Lamb)$$ where the sum is taken over lattices $\Lamb$ of flag type $\{2\}$ such that their associated flag
them consists only of a line which lies completely on the Pfaffian hypersurface.Thus with this notation we have
$I=\{i_1,\dots,i_k\}\in
\{1,\dots,d'-1,1^*,2^*,\dots,l^*\},$ where $l$ denotes the
highest dimensional linear subspace on the Pfaffian.

The indexing  and further decomposition of $A(p,p^{-s})$ are done via \emph{admissible} subsets $I\subseteq
\{1,\dots,{d'-1},1^*,2^*,\dots,l^*\}$. We call $I$  admissible if the following conditions hold:
\begin{enumerate}[(i)]
\item Only $i_j$ or $i_j^*$  can belong to $I$, but not both;
\item If $i_j^*\in I$ then $i_k\not\in I$ for all $k\leq j$.
\end{enumerate} 

Then we have
\begin{align*}
A(p,p^{-s})&=\sum_{I\subseteq\{1,\dots,d'-1\}}c_{I,p}A_I(p,p^{-s})+
\sum_{\underset{J_2\subseteq\{2,\dots,d'-1\}}{I=1^*\cup
J_2}}c_{I,p}A_I(p,p^{-s})+\\&
+\sum_{\underset{J_2\subseteq\{3,\dots,d'-1\}}{\underset{J_1\subseteq\{1^*\}}{I=J_1\cup
2^*\cup J_2}}}c_{I,p}A_I(p,p^{-s}) +\dots +
\sum_{\underset{J_2\subseteq\{l+1,\dots,d'-1\}}{\underset{J_1\subseteq\{1^*,\dots,l-1^*\}}
{I=J_1\cup l^*\cup J_2}}}c_{I,p}A_I(p,p^{-s}).
\end{align*}
It is possible to give an explicit description of the 
coefficients $c_{I,p}$.
First let $\n_{\i_{i_j}}(p)$ denote the number of $\F_p$-points on the 
component $F_{\i_i}$  of $F_{i-1}(\mathfrak{P}_G)$,
and let $\delta_{\i_i}$ be zero or one depending whether the corank of
this component is zero or one.
Also let $b_I(p)$  be the number of
$\F_p$-points on the flag variety defined by lattices of flag type
$I$.  This is equal to
$$b_I(p)=\binom{d'}{i_l}\binom{i_l}{i_{l-1}}\dots\binom{i_3}{i_2}
\binom{i_2}{i_1},$$ for $I=\{i_1,i_2,\dots, i_l\}\subseteq\{1,\dots,d'\}.$
We define the flag type 
$I-k:=\{i_j-k:i_j\in I\}\subseteq \{1,\dots,d'-k\}$. We claim that if
$I=\{i_1,\dots,i_n\}$ then \begin{equation}\label{sum1} c_{I,p}=
b_{I}(p)-b_{I- i_1}(p)(\sum_{\i_{i_1}}\delta_{\i_i}\n_{\i_{i_1}}(p)),
\end{equation} while for $I=\{i_1^*,\dots,i_n^*,k^*,j_1,\dots,j_r\}$.
\begin{equation}\label{sum2}c_{I,p}= b_{J_2-
k}(p)(\sum_{\i_{k}}\delta_{\i_i}\n_{\i_{k}}(p))b_{J_1}(p)-b_{J_2-(k-j_1)}(p)(\sum_{\i_{j_1}}
\delta_{\i_i}\n_{\i_{j_1}}(p))b_{J_1\cup
k}(p).
\end{equation}
This essentially comes from the formulae for the number of points on flag
varieties. In (\ref{sum1}) we have a flag
variety where no part of the flag intersects the Pfaffian
hypersurface, so at the level of $(i_1-1)$-spaces we need to subtract the
number $\sum_{\i_{i_1}}\delta_{\i_i}\n_{\i_{i_1}}(p)$ of $(i_1-1)$-spaces on the Pfaffian, for which the weight function is different.
The coefficient in (\ref{sum2}) is derived in a similar fashion. As
$(k-1)$-dimensional spaces lie on the Pfaffian we need to compute, $\sum_{\i_{k}}\delta_{\i_i}\n_{\i_{k}}(p)$. Restricting to any $(k-1)$-dimensional space of these it is clear that the flag variety with the highest subspace being this fixed $(k-1)$-dimensional spaces, lies completely on the Pfaffian.
In higher dimensions we count only those spaces that are off the Pfaffian at the level $j_1$,
and thus we subtract $\sum_{\i_{j_1}}\delta_{\i_i}\n_{\i_{j_1}}(p)$.

Rearranging such that the we can pull out the coefficients $\n_{\i_{i_j}}(p)$, we obtain the following decomposition
\begin{equation*}A(p,p^{-s})=W_0(p,p^{-s})+\sum_{i=1}^l\sum_{\i_i}\delta_{\i_i}
\n_{\i_i}(p)W_{\i_i}(p,p^{-s})\end{equation*} where
\begin{equation*}W_0(p,p^{-s})=\sum_{I\subseteq\{1,\dots,d'-1\}}
b_I(p)A_I(p,p^{-s})\end{equation*} and
\begin{align*}
W_{\i_i}(p,p^{-s})&=\sum_{\underset{J_2\subseteq\{i+1,\dots,d'-1\}}
{\underset{J_1\subseteq\{1^*,2^*,\dots,i-1^*\}}{I=J_1\cup i^*\cup
J_2}}}b_{J_2-i}(p)b_{J_1}(p)A_I(p,p^{-s})-
\sum_{\underset{J_2\subseteq\{i+1,\dots,d'-1\}}{\underset
{J_1\subseteq\{1^*,2^*,\dots,i-1^*\}}
{I=J_1\cup i\cup J_2}}}b_{J_2-i}(p)b_{J_1}(p)A_I(p,p^{-s})\\& =
\sum_{\underset{J_2\subseteq\{i+1,\dots,d'-1\}}
{J_1\subseteq\{1^*,2^*,\dots,i-1^*\}}}
b_{J_2-i}(p)b_{J_1}(p)(A_{J_1\cup i^* \cup J_2}(p,p^{-s}) -
A_{J_1\cup i\cup J_2}(p,p^{-s}) ).
\end{align*}
Here $i^*$ is a lattice that gives a point on $F_{\i_i}$, and $J_1$ runs over all lattices that are contained in this particular component $F_{\i_i}$.

\section{Igusa factors}\label{Igusa}

Now that we have an expression for the
generating function, and all the terms appearing in it, the rest of the proof of Theorem \ref{main theorem general} is basically a generalisation of the summation formulae that appear in \cite{F24}, see Lemmas 4.2 to 4.16, in particular.

\begin{lemma}\label{igusa1}
\begin{align*}W_0(p,p^{-s})&=\sum_{I\subseteq\{1,\dots,d'-1\}}
b_I(p)A_I(p,p^{-s})\\&=\sum_{I\subseteq\{1,\dots,d'-1\}}
b_I(p^{-1})\prod_{i\in I}\frac{X_i}{1-X_i}\\ &=
I_{d'-1}(X_1,\dots,X_{d'-1}),\end{align*} where
$X_i=p^{(d+d'+m-i)i-(d+i)s}.$
\end{lemma}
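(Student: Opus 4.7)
The plan is to unwind the definition of $W_0$ using the weight-function calculations from Section \ref{congruences} specialised to Case I (empty intersection with the Pfaffian), and then recognise a standard Gaussian-binomial identity that converts $b_I(p)$ into $b_I(p^{-1})$.

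First, I would observe that in the decomposition at the end of Section \ref{indexing}, $W_0$ collects precisely those admissible indexing sets $I \subseteq \{1,\dots,d'-1\}$ with no starred entries, i.e.\ lattices $\Lamb$ whose associated flag meets the Pfaffian trivially. For such lattices the Case I analysis gives the clean formula $w'(\Lamb) = d\sum_{i\in I} r_i$, while the type always yields $w(\Lamb)=\sum_{i\in I} i\, r_i$, and Proposition \ref{multiplicity} gives the multiplicity $\mu(\Lamb) = p^{-\dim \flag_I + \sum_{i\in I}(d'-i)i\, r_i}$.

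Next, substitute these into (\ref{actua function general}) and combine the exponents of $p$ attached to each $r_i$:
\begin{equation*}
(d+m)\,i \;-\; s(i+d) \;+\; (d'-i)\,i \;=\; i(d+d'+m-i) - (d+i)s,
\end{equation*}
which is exactly $\log_p X_i$ for the $X_i$ in the statement. The sum over lattices of flag type $I$ therefore factorises as a product of geometric series in $r_i\geq 1$:
\begin{equation*}
A_I(p,p^{-s}) \;=\; p^{-\dim \flag_I}\,\prod_{i\in I}\frac{X_i}{1-X_i}.
\end{equation*}

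Then I would invoke the Gaussian binomial identity $\binom{n}{k}_{p^{-1}} = p^{-k(n-k)}\binom{n}{k}_{p}$ applied termwise to $b_I(p) = \binom{d'}{i_l}\binom{i_l}{i_{l-1}}\cdots\binom{i_2}{i_1}$, together with the standard fact that the total degree in $p$ of $b_I(p)$ equals the dimension $\dim \flag_I = i_1(i_2-i_1)+\dots+i_{l-1}(i_l - i_{l-1})+i_l(d'-i_l)$. This gives
\begin{equation*}
b_I(p)\,p^{-\dim \flag_I} \;=\; b_I(p^{-1}),
\end{equation*}
so multiplying $A_I$ by $b_I(p)$ and summing over all $I\subseteq\{1,\dots,d'-1\}$ reproduces the defining expression of the Igusa factor $I_{d'-1}(X_1,\dots,X_{d'-1})$ from Definition \ref{I-factor} (with $n=d'-1$, so the flag is taken in $\F_p^{d'}$, matching the ambient space).

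The computation is essentially mechanical; the only subtle step is the Gaussian binomial inversion. All other ingredients (the weight functions in the empty-intersection case, the multiplicity formula, and the geometric series) are already in place from the earlier sections.
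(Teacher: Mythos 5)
Your proof is correct and follows essentially the same route as the paper's: apply the multiplicity formula from Proposition~\ref{multiplicity} together with $w(\Lamb)=\sum_{i\in I}ir_i$ and $w'(\Lamb)=d\sum_{i\in I}r_i$, collect the $r_i$-exponents into $X_i$, sum the geometric series, and absorb $p^{-\dim\flag_I}$ into $b_I(p)$ to get $b_I(p^{-1})$. The only difference is cosmetic: the paper states $b_I(p)\,p^{-\dim\flag_I}=b_I(p^{-1})$ without elaboration, whereas you justify it termwise via the Gaussian binomial identity and the Schubert-cell dimension count, which is a fine (and slightly more transparent) way to present the same step.
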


\begin{proof}
Using the Lemma \ref{multiplicity} we can write
\begin{align*}&\sum_{I\subseteq\{1,\dots,d'-1\}}
b_I(p)A_I(p,p^{-s})\\&=\sum_{I\subseteq\{1,\dots,d'-1\}}
b_I(p)\prod_{i\in I}\sum_{r_i=1}^\infty p^{(d+m)ir_i-(d+i)r_is}
p^{-\dim\flag_I}p^{(d'-i)ir_i}\\&=
\sum_{I\subseteq\{1,\dots,d'-1\}}b_I(p^{-1})\prod_{i\in I}\sum_{r_i=1}^\infty
p^{i(d+d'+m-i)r_i-(d+i)r_is}\\&=
\sum_{I\subseteq\{1,\dots,d'-1\}}b_I(p^{-1})\prod_{i\in I}
\frac{p^{i(d+d'+m-i)
-(d+i)s}}{1- 
p^{i(d+d'+m-i)-(d+i)s}}. 
\end{align*}
\end{proof}

\begin{lemma}\label{igusa2}
\begin{align*}
&W_{\i_i}(p,p^{-s})\\&=
\sum_{\underset{J_2\subseteq\{i+1,\dots,d'-1\}}{J_1\subseteq\{1^*,2^*,\dots,i-1^*\}}}
b_{J_2-i}(p)b_{J_1}(p)(A_{J_1\cup i^* \cup J_2}(p,p^{-s}) -
A_{J_1\cup i\cup J_2}(p,p^{-s}) ) \\ &= \sum_{J_2\subseteq
\{i+1,\dots,d'-1\}} b_{J_2-i}(p^{-1})\prod_{j_2\in
J_2}\frac{X_{j_2}}{1-X_{j_2}}
\sum_{J_1\subseteq\{1^*,2^*,\dots,i-1^*\}}b_{J_1}(p)(A_{J_1\cup
i^*}(p,p^{-s}) - A_{J_1\cup i}(p,p^{-s}) )\\ &=
I_{d'-i-1}(X_{i+1},\dots,X_{d'-1})\sum_{J_1\subseteq\{1^*,2^*,\dots,i-1^*\}}b_{J_1}(p)
(A_{J_1\cup
i^*}(p,p^{-s}) - A_{J_1\cup i}(p,p^{-s})),
\end{align*}
where $X_i=p^{(d+d'+m-i)i-(d+i)s}.$ Again $i^*$ is a lattice that gives 
a point on $F_{\i_i}$, and $J_1$ runs over all lattices that are contained on this particular component.
\end{lemma}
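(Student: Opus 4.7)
The first equality is the definition of $W_{\i_i}$ recorded in Section \ref{indexing}. The substantive work is the second, which exploits the fact that for any admissible type of the form $I = J_1 \cup \{i^*\} \cup J_2$ (or $J_1 \cup \{i\} \cup J_2$), with $J_2 \subseteq \{i+1,\dots,d'-1\}$, the generating function $A_I(p,p^{-s})$ factorises cleanly into an upper piece indexed by $J_2$ and a lower piece indexed by $J_1 \cup \{i^*\}$ (resp.\ $J_1 \cup \{i\}$). This factorisation rests on three additivity statements: (a) block-additivity of $w(\Lamb) = \sum_{i_j \in I} i_j r_{i_j}$, which is trivial; (b) the splitting of $w'(\Lamb)$ supplied by Lemma \ref{reducing weights}, since the subspaces of the associated flag indexed by $J_2$ lie above the component $F_{\i_i}$ and hence by the corank-$0$ analysis of Section \ref{congruences} do not meet the Pfaffian, contributing $d\sum_{j \in J_2} r_j$ to $w'(\Lamb)$; and (c) compatibility of the multiplicity formula of Proposition \ref{multiplicity}, which together with the palindromic identity $b_J(p) = p^{\dim \flag_J} b_J(p^{-1})$ permits the factors $b_{J_2-i}(p)$ and $b_{J_1}(p)$ to be pulled across the product.

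For the $J_2$-piece the computation reproduces that of Lemma \ref{igusa1}: for each $j \in J_2$ the exponent in the $r_j$-sum assembles as $(d+m)j r_j - (d+j) r_j s + (d'-j) j r_j = j(d+d'+m-j) r_j - (d+j) r_j s$, so the geometric series in $r_j$ yields $X_j / (1 - X_j)$ with $X_j = p^{j(d+d'+m-j) - (d+j)s}$. The shift from $J_2 \subseteq \{i+1,\dots,d'-1\}$ to $J_2 - i \subseteq \{1,\dots,d'-i-1\}$ appearing in $b_{J_2 - i}(p)$ reflects the fact that the upper flag lives inside the $(d'-i)$-dimensional quotient $V / V_i$, so the relevant flag variety is $\flag_{J_2 - i}$ in this quotient rather than $\flag_{J_2}$ in $V$. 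Crucially, the upper $J_2$-factor produced this way is \emph{identical} for the types $J_1 \cup \{i^*\} \cup J_2$ and $J_1 \cup \{i\} \cup J_2$, because the geometry above level $i$ is insensitive to whether the level-$i$ subspace itself lies on the Pfaffian; consequently this factor pulls out of the difference, giving the second equality.

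For the third equality, summing the upper factor over all $J_2 \subseteq \{i+1,\dots,d'-1\}$ and matching against Definition \ref{I-factor} identifies it as the Igusa factor $I_{d'-i-1}(X_{i+1},\dots,X_{d'-1})$, while the lower sum over $J_1 \subseteq \{1^*,\dots,(i-1)^*\}$ with its difference $A_{J_1 \cup i^*} - A_{J_1 \cup i}$ cannot be collapsed further at this stage, since its weight function depends nontrivially on the valuations of determinants coming from the component $F_{\i_i}$ as analysed in Case III of Section \ref{congruences}. The main technical nuisance is verifying that $\dim \flag_I$ telescopes correctly under the partition $I = J_1 \cup \{i^*\} \cup J_2$, so that the exponent of $p$ in $\mu(\Lamb)$ really does split into contributions from $\flag_{J_1}$, $\flag_{J_2 - i}$, and a residual piece depending on $i$ alone; this reduces to a direct combinatorial check, which I would carry out first before doing the generating-function manipulation.
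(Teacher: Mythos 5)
Your proposal is correct and follows the intended route: first equality by definition of $W_{\i_i}$ from Section \ref{indexing}, second by factoring $A_{J_1\cup i^*\cup J_2}$ (and $A_{J_1\cup i\cup J_2}$) into a $J_2$-piece and a $J_1\cup\{i^*\}$-piece using Lemma \ref{reducing weights} for $w'$, Proposition \ref{multiplicity} for $\mu$, and the palindromy $b_J(p^{-1})=p^{-\dim\flag_J}b_J(p)$; third by matching the $J_2$-sum against Definition \ref{I-factor}. The telescoping you flag does hold: from the iterated-fibration description $\dim\flag_{J_1\cup\{i\}\cup J_2}=\dim\flag_{J_1\cup\{i\}}+\dim\flag_{J_2-i}$ (the latter taken in a $(d'-i)$-dimensional space), since the contributions $i(d'-h_s)+i(h_s-h_{s-1})+\cdots+i(h_1-i)$ collapse to $i(d'-i)$ and cancel, so $\mu(\Lamb)$ factors cleanly and the residual $p^{-\dim\flag_{J_2-i}}$ converts $b_{J_2-i}(p)$ into $b_{J_2-i}(p^{-1})$ exactly as you say.
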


\begin{lemma}\label{igusa3}
\begin{align*}
&\sum_{J_1\subseteq\{1^*,2^*,\dots,i-1^*\}}b_{J_1}(p)(A_{J_1\cup
i^*}(p,p^{-s}) - A_{J_1\cup i}(p,p^{-s}) )\\ &=
\sum_{J_1\subseteq\{1^*,2^*,\dots,i-1^*\}}b_{J_1}(p^{-1})\prod_{{j_1}\in
J_1}\frac{Y_{\i_{j_1}}}{1-Y_{\i_{j_1}}}(A_{i^*}(p,p^{-s}) - A_{i}(p,p^{-s})
)\\&= I_{i-1}(Y_{1},\dots,Y_{\i_{i-1}})(A_{i^*}(p,p^{-s}) - A_{
i}(p,p^{-s}) ),
\end{align*}
where $Y_{\i_i}=p^{i(d+m)+c_{\i_i}-(d+i-1)s}$ for $\i_1>1$, while  
$Y_1=p^{d+m+c_1-(d-1)s}.$
\end{lemma}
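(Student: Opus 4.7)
My plan is to mimic the computation of Lemma \ref{igusa1}, this time working with the starred indices $J_1 \subseteq \{1^*,\ldots,(i-1)^*\}$, whose flag pieces lie entirely on the Pfaffian $\mathfrak{P}_G$. The starting point is the expression
$$A_I(p,p^{-s}) = \sum_{\nu(\Lamb)=I} p^{(d+m)w(\Lamb) - s(w(\Lamb)+w'(\Lamb))}\mu(\Lamb),$$
with $w(\Lamb)$ read from the flag type as in Section \ref{basic theory}, $\mu(\Lamb)$ from Proposition \ref{multiplicity}, and $w'(\Lamb)$ from the formula derived at the end of Section \ref{valuations}.

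The crucial point is that, for $I = J_1 \cup \{i\}$ or $I = J_1 \cup \{i^*\}$ with $J_1 \subseteq \{1^*,\ldots,(i-1)^*\}$, the exponent $(d+m)w(\Lamb) - s(w(\Lamb) + w'(\Lamb))$ together with $\mu(\Lamb)$ decomposes additively into contributions indexed by the starred layers $j_1 \in J_1$ and a single contribution from the top index $i$ or $i^*$. In the difference $A_{J_1\cup i^*} - A_{J_1\cup i}$ only the latter changes, so the sums over each $r_{j_1}$ with $j_1 \in J_1$ factor out of the difference. I would sum each such $r_{j_1}$ over $1,2,\ldots$, simultaneously summing over the integer values of the auxiliary valuations $v_p(a_k)$ that appear in the minimum defining $w'(\Lamb)$ at the $j_1$-th layer. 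By Lemma \ref{valuation} the number of independent $a_k$'s supplied at this layer equals the codimension $c_{\i_{j_1}}$ of the Fano component $F_{\i_{j_1}}$ in the Grassmannian, and the resulting double sum collapses into a single geometric series of ratio $Y_{\i_{j_1}} = p^{j_1(d+m)+c_{\i_{j_1}}-(d+j_1-1)s}$, with the shift $-(d-1)s$ in the exceptional case $j_1 = 1$ accounting for the factor of $2$ in Lemma \ref{smthpt}. After this factorisation one is left exactly with $A_{i^*}(p,p^{-s}) - A_{i}(p,p^{-s})$, which yields the first claimed equality. The second equality then follows by summing over $J_1 \subseteq \{1^*,\ldots,(i-1)^*\}$ and recognising $\sum_{J_1} b_{J_1}(p^{-1}) \prod_{j_1 \in J_1} Y_{\i_{j_1}}/(1-Y_{\i_{j_1}})$ as the Igusa factor $I_{i-1}(Y_{1},\ldots,Y_{\i_{i-1}})$ of Definition \ref{I-factor}.

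The main obstacle is the bookkeeping for the minimum in $w'(\Lamb,I,r_I)$: because the minimum is over sums of the shape $r_{i_1} + \cdots + r_{i_{j-1}} + v_p(a_k)$, the layers are not a priori independent. I would split the sum according to which term of the minimum is attained at each layer, and then invoke the inequality (\ref{hat}), namely $r_{i_1}+\cdots+r_{i_{j-1}} + v_p(\hat{a}_n) \geq v_p(a_n)$, to discard the cross-terms that would otherwise couple different layers. Once this decoupling is made rigorous, the remaining computation is exactly the geometric-series manipulation carried out in Lemma \ref{igusa1}, with the multiplicity bookkeeping from Proposition \ref{multiplicity} producing the $b_{J_1}(p^{-1})$ factor in the same way as it produced $b_I(p^{-1})$ there.
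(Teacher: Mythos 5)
The paper does not actually supply a proof of Lemma \ref{igusa3}; it only remarks that Lemmas \ref{igusa1}--\ref{igusa3} are generalisations of the summation formulae in \cite{F24}. So the question is whether your blind argument is correct, and here there is a genuine gap.

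Your central claim is that the exponent $(d+m)w(\Lamb)-s(w(\Lamb)+w'(\Lamb))$ together with $\mu(\Lamb)$ ``decomposes additively into contributions indexed by the starred layers $j_1\in J_1$ and a single contribution from the top index $i$ or $i^*$,'' so that in the difference $A_{J_1\cup i^*}-A_{J_1\cup i}$ ``only the latter changes, so the sums over each $r_{j_1}$ factor out.'' This is not true as stated. While $w(\Lamb)=\sum_{j\in I}jr_j$ and $\mu(\Lamb)$ do factor over layers (Proposition \ref{multiplicity}), the weight $w'(\Lamb, J_1\cup\{i\}, \cdot)$ in the $i^*$-case is governed by a \emph{single} minimum $\min\{r_{i_1}+\cdots+r_{i_l}+r_i,\;v_p(a_{c_{\i_{i_1}}}),\;\mathbf{r_{i_1}+a_{c_{\i_{i_2}}}},\ldots,\mathbf{r_{i_1}+\cdots+r_{i_l}+a_{c_{\i_{i}}}}\}$ which genuinely couples the $J_1$-layer variables $r_{j_1}, v_p(a_k)$ with the top-layer variables $r_i, v_p(a_k)$. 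The difference $\Delta = w'_{J_1\cup i}-w'_{J_1\cup i^*}$ therefore depends on \emph{all} the layers simultaneously, and the $r_{j_1}$-sums do not factor out of the difference by inspection.

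You acknowledge this obstacle, but the repair you propose --- invoking inequality (\ref{hat}), $r_{i_1}+\cdots+r_{i_{j-1}}+v_p(\hat{a}_n)\geq v_p(a_n)$ --- is the wrong tool. That inequality is already consumed in Section \ref{valuations} in passing from the raw valuation formulae to the cleaned-up expression for $w'(\Lamb,J_1,r_{J_1})$: it removes the redundant $\hat{a}_n$ entries, but it says nothing about the remaining cross-layer structure $\pi_{j-1}+v_p(a_k)$, $\pi_j=r_{i_1}+\cdots+r_{i_j}$. After (\ref{hat}) has been applied the layers are \emph{still} coupled. What is actually needed is a conditioning argument: one observes that $\Delta$ vanishes unless the $J_1$-portion of the minimum is attained at the trivial term $\pi_l$, i.e.\ unless $N_j\geq \pi_l-\pi_{j-1}$ (equivalently $v_p(a_k)$ is large) at every layer $j_1\in J_1$; only under this condition does the top-layer minimum $\min\{r_i,N_i\}$ appear isolated, reproducing the integrand of $A_{i^*}-A_i$, and the constrained sum over each $J_1$-layer then resummes to the geometric factor $Y_{\i_{j_1}}/(1-Y_{\i_{j_1}})$ with the $p^{-1}$-corrections that assemble into $b_{J_1}(p^{-1})$. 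Your sketch gestures at ``split the sum according to which term of the minimum is attained,'' which is indeed the right idea, but you do not carry it out, and the mechanism you name does not accomplish it. A further small slip: the number of constrained $a_k$'s supplied at the $j_1$-th layer is the codimension \emph{increment} $c_{\i_{i_{j_1}}}-c_{\i_{i_{j_1-1}}}$, not the cumulative $c_{\i_{j_1}}$, and the exponent of $Y_{\i_{j_1}}$ should match the \emph{dimension} $d_{\i_{j_1}}$ of the Fano component (as in Theorem \ref{main theorem general} and Proposition \ref{basic sum}), so one should be careful which of $c_{\i_{j_1}}$ and $d_{\i_{j_1}}$ enters where.
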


It remains to calculate $A_{i^*}(p,p^{-s}) - A_{
i}(p,p^{-s}) $ and justify the terms $X_i$ and $Y_{\i_i}$ appearing in the
formulae above. 

\begin{prop}\label{basic sum} Let $G=\Gamma\times \Z^m$ as before. Let $d=h(G/Z(G))$, 
and $d'=h([G,G])$. Assume $d$ is even. Let
$n_i$ be the dimension of $\G(i-1,d'-1)$, $d_{\i_i}$ the
dimension of $F_{\i_i}$ and $c_{\i_i}$ its  codimension,  so  $n_i=c_{\i_i}+d_{\i_i}$. Then
\begin{align*}
A_{i^*}-A_i&=
\sum_{r_i=1}^{\infty}\sum_{a_1=1}^{r_i}\sum_{a_2=1}^{r_i}\dots
\sum_{a_{n_i}=1}^{r_i}\mu(r_i,a_1,a_2,\dots,a_{n_i})p^{i(d+m)r_i-{(d+i)r_i}s}
(p^{st_i\min\{r_i,a_1,a_2,\dots,a_{c_{\i_i}}\}}-1)\\
&=\frac{p^{i(d+m)-(d+i-t_i)s}(1-p^{-st_i})}{(1-p^{i(d+m)+d_{\i_i}-(d+i-t_i)s})
(1-p^{i(d+m)+n_i-({d+i})s})}=\frac{p^{-d_{\i_i}}Y_{\i_i}-p^{-n_i}X_i}{(1-Y_{\i_i})(1-X_i)},\end{align*}
where $X_i=p^{i(d+d'+m-i)-(d+i)s}$ and $Y_{\i_i}=p^{(i(d+m)+d_{\i_i})
-(d+i-t_i)s}$. Furthermore,
$t_1=2$ and $t_i=1$ for $i\geq 2$.
\end{prop}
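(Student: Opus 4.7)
The plan is to first verify the claimed first equality by expanding $A_{i^*}$ and $A_i$ using the formulas developed in Sections \ref{correspondence}--\ref{valuations}, and then to evaluate the resulting multiple sum as a product of geometric series. For a representative lattice $\Lamb$ of type $\{i\}$ with parameter $r_i$ one has $w(\Lamb) = ir_i$, and by Proposition \ref{multiplicity} the multiplicity splits as $\mu(\Lamb) = \prod_{j=1}^{n_i}\mu(r_i, a_j)$, where $a_1,\dots,a_{n_i}$ denote the $p$-adic valuations of the affine coordinates of the corresponding flag in $\G(i-1,d'-1)$. The case analysis of Sections \ref{congruences} and \ref{valuations} yields $w'(\Lamb) = dr_i$ off the Pfaffian and $w'(\Lamb) = dr_i - t_i\min\{r_i, a_1,\dots, a_{c_{\i_i}}\}$ on the component $F_{\i_i}$, with $t_1 = 2$ by Lemma \ref{smthpt}, $t_i = 1$ for $i\geq 2$ by equation (\ref{pure weight 2}), and the count $c_{\i_i}$ of relevant coordinates provided by Lemma \ref{valuation}. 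Subtracting the two contributions produces the first displayed identity, the factor $p^{st_i\min\{\cdots\}} - 1$ arising directly from the differing minimum terms.

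For the second equality I would first factor out the sum over those $a_j$ with $c_{\i_i} < j \leq n_i$, which do not enter the minimum; each such sum equals $p^{r_i - 1}$ by (\ref{sum mu}), giving the total factor $(p^{r_i - 1})^{d_{\i_i}}$. For the remaining inner sum over $a_1,\dots, a_{c_{\i_i}}$, I would stratify by the value $k$ of $\min\{a_1,\dots, a_{c_{\i_i}}\}$. The telescoping identity $\sum_{b=k}^{r_i}\mu(r_i,b) = p^{r_i - k}$, an immediate consequence of the definition of $\mu$, shows that the number of tuples with $\min \geq k$ is $p^{(r_i-k)c_{\i_i}}$, and hence the number with $\min = k$ is $p^{(r_i-k)c_{\i_i}} - p^{(r_i-k-1)c_{\i_i}}$ for $k < r_i$, and $1$ for $k = r_i$.

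Setting $u = p^{st_i}$, $q = p^{c_{\i_i}}$ and $\beta = p^{i(d+m)+d_{\i_i}-(d+i)s}$ (so that $\beta q = X_i$ and $\beta u = Y_{\i_i}$), these reductions collapse the triple sum to
\begin{equation*}
A_{i^*} - A_i = p^{-d_{\i_i}}\sum_{r=1}^{\infty}\beta^r\left[\sum_{k=1}^{r-1}(u^k - 1)q^{r-k-1}(q-1) + (u^r - 1)\right].
\end{equation*}
Splitting the bracket into $u$- and $1$-terms, swapping the order of summation over $r$ and $k$ in the double sum, and summing two ordinary geometric series independently yields
\begin{equation*}
\frac{\beta(u-1)}{(1-\beta u)(1-\beta)}\cdot\left[1 + \frac{(q-1)\beta}{1-\beta q}\right],
\end{equation*}
which simplifies via the identity $(1-\beta q) + \beta(q-1) = 1 - \beta$ to $\beta(u-1)/\bigl((1-\beta u)(1-\beta q)\bigr)$. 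Multiplying by the prefactor $p^{-d_{\i_i}}$ and observing $p^{-d_{\i_i}}\beta(u-1) = p^{-d_{\i_i}}Y_{\i_i} - p^{-n_i}X_i$ produces the claimed closed form.

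The main obstacle I expect is the algebraic bookkeeping in the last step, specifically engineering the cancellation that collapses two three-factor denominators into the single product $(1-X_i)(1-Y_{\i_i})$. A subtler preliminary point is checking that for a lattice of pure type $\{i\}$ no reduced coordinates $\hat a_n$ of the kind introduced in (\ref{hat}) can appear; this holds because $\{i\}$ has only one block in the flag, so the general weight-function formula at the end of Section \ref{valuations} specialises to a single minimum over exactly the $c_{\i_i}$ unreduced coordinates.
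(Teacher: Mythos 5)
Your computation is correct, and it is precisely the ``manipulation of infinite series'' that the paper attributes to Proposition~4.12 of~\cite{F24} without reproducing it: separating off the $d_{\iota_i}$ coordinates outside the minimum via $\sum_b\mu(r,b)=p^{r-1}$, stratifying the remaining $c_{\iota_i}$-tuple by the value of the minimum using $\sum_{b\geq k}\mu(r,b)=p^{r-k}$, and then collapsing the resulting double geometric series by interchanging summation order. The algebraic cancellation $(1-\beta q)+\beta(q-1)=1-\beta$ you highlight is indeed the step that produces the two-factor denominator $(1-X_i)(1-Y_{\iota_i})$, and the identifications $\beta q=X_i$, $\beta u=Y_{\iota_i}$, $p^{-d_{\iota_i}}\beta=p^{-n_i}X_i$ all check out.
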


\begin{proof}
The proof is given in Proposition 4.12 in \cite{F24} and is essentially a
manipulation of infinite series.
\end{proof}

We conclude that the $W_{\i_i}(p,p^{-s})$ are of the
form given in Theorem \ref{main theorem general}. This concludes the proof of Theorem \ref{main theorem general}.

\section{Example}\label{segre}

In this section show an explicit application of the main theorem, by
defining a class two Lie ring (recall, that there is also a class two
nilpotent group with the same presentation) whose Pfaffian is the quadric Segre surface in $\P^3$.

Let $G_\S$ have the presentation
$$G_\S=\<x_1,x_2,x_3,x_4,y_1,y_2,y_3,y_4:
[x_1,x_3]=y_1,[x_1,x_4]=y_2,[x_2,x_3]=y_3,[x_2,x_4]=y_4\>.$$
The matrix of relations
$M(\mathbf{y})_{ij}=[x_i,x_j],$ is 
\begin{equation*} M(\bf{y})=\begin{array}({c c c c}) 0 & 0 & y_1 &
y_2 \\ 0 & 0& y_3 & y_4 \\ -y_1& -y_3& 0&0\\ -y_2 & -y_4& 0& 0,\\
\end{array} \end{equation*} and the Pfaffian is thus defined
by $$\S: y_1y_4-y_2y_3=0.$$
Since $\S\isom \P^1\times\P^1$, the number of $\F_p$-rational points on $\S$ is
$|\S(\F_p)|=(p+1)^2$. Furthermore, over $\F_p$ there are
$2(p+1)$ lines on this surface and no higher dimensional linear subspaces, see e.g. \cite{Hirschfeld}.

Note that in this group $Z(G_\S)=[G_\S,G_\S]$, so in the application
of the Theorem \ref{main theorem general}, we have $m=0$.
\begin{theo}
For almost all primes $p$, the local normal zeta function of $G_\S$ is
given by
$$\zeta_{G_\S,p}^\ns(s)=\zeta_{\Z^4,p}(s)\cdot\zeta_p(8s-16)\cdot(W_0(p,p^{-s})+(p+1)^2W_1(p,p^{-s})+
2(p+1)W_2(p,p^{-s}))$$ where
\begin{align*}
W_0(p,T)&= I_3(X_1,X_2,X_3)\\ W_1(p,T)&=I_2(X_2,X_3)E_1(X_1,Y_1)\\
W_2(p,T)&=I_1(X_3)E_2(X_2,Y_2)I_1(Y_1)\end{align*} with
$X_i=p^{i(8-i)-(4+i)s}$, $Y_1=p^{6-3s}$ and $Y_2=p^{9-5s}.$
\end{theo}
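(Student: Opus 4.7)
The plan is to apply Theorem \ref{main theorem general} directly, so the proof reduces to reading off $d,d',m$, identifying the Pfaffian, enumerating the components of the Fano varieties of $\mathfrak{P}_{G_\S}$ together with their $\F_p$-point counts and corank data, and substituting into the formula.

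From the presentation, $h(G_\S/Z(G_\S))=4$ and $h([G_\S,G_\S])=4$; since $Z(G_\S)=[G_\S,G_\S]$ we have $m=0$, and the hypothesis that $d$ is even is satisfied. This already produces the overall prefactor $\zeta_{\Z^4,p}(s)\zeta_p((d+d')s-d(d+m))=\zeta_{\Z^4,p}(s)\zeta_p(8s-16)$ and the Grassmannian numerical data $X_i=p^{i(8-i)-(4+i)s}$. A direct expansion yields $\det M(\mathbf{y})=(y_1y_4-y_2y_3)^2$, so the reduced Pfaffian is the Segre quadric $\S\colon y_1y_4-y_2y_3=0$ in $\P^3$, which is smooth and absolutely irreducible, so the hypotheses of the main theorem are satisfied.

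Next I would enumerate the Fano varieties. Since $\S\cong\P^1\times\P^1$, we have $|\S(\F_p)|=(p+1)^2$, so $F_0(\S)$ has a single component (namely $\S$ itself) of dimension $d_1=2$ with $\n_{\i_1}(p)=(p+1)^2$. The classical description of the two rulings of a smooth quadric surface (see e.g. \cite{Hirschfeld}) identifies $F_1(\S)$ as a disjoint union of two copies of $\P^1$, each of dimension $d_{\i_2}=1$ with $\n_{\i_2}(p)=p+1$; no planes lie on $\S$, so $F_i(\S)=\emptyset$ for $i\geq 2$. Substituting into the formulas of Theorem \ref{main theorem general} yields $Y_1=p^{(d+m)+d_1-(d-1)s}=p^{6-3s}$ and $Y_2=p^{2(d+m)+d_{\i_2}-(d+1)s}=p^{9-5s}$, as required.

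Finally, I have to check that $\delta_{\i_i}=1$ for every component appearing (Remark \ref{corank 0}). For the single component of $F_0(\S)$ this is immediate from Lemma \ref{rank}, which forces corank $2$ at every smooth point. For each ruling, one picks two representatives spanning the line and verifies that the augmented matrix $(M(\alpha_1)\mid M(\alpha_2))$ has rank exactly $3$; by the antisymmetry of $M$, the two unit entries arising from the two chosen points must lie in different rows, which is the one-line verification used in the corank argument of Section \ref{congruences}. With $\delta=1$ throughout, the sum over the two components of $F_1(\S)$ collapses to $2(p+1)\,W_2(\X,\Y)$ since both rulings have the same $d_{\i_2}$ and hence share the same $W_{\i_2}$. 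Assembling the Igusa and exceptional factor contributions via Lemmas \ref{igusa1}--\ref{igusa3} then produces the stated closed form. The only real geometric input is the enumeration of the rulings on $\S$ and the direct corank check; everything else is mechanical substitution.
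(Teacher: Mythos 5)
Your proof is correct, and it takes a genuinely different (and more economical) route than the paper's. Where you simply read off $d=d'=4$, $m=0$, identify $\mathfrak{P}_{G_\S}$ as the hyperbolic quadric, enumerate the components of $F_0(\S)$ and $F_1(\S)$ with their $\F_p$-point counts and dimensions, verify the corank condition via Lemma \ref{rank} and the rank-increase argument, and substitute into Theorem \ref{main theorem general}, the paper's ``Calculation'' subsection instead re-derives the whole formula from scratch: it computes multiplicities explicitly (Example \ref{mult}), writes out the weight-function reductions $w'(I\cup 3)=w'(I)+w'(3)$ and the corresponding splittings of the $A_I$, displays the full sum of $A_I$ over admissible subsets of $\{1^*,2^*,1,2,3\}$ with explicit Gaussian binomial coefficients, and then computes $A_{1^*}-A_1$, $A_{2^*}-A_2$, $A_{1^*,2^*}-A_{1^*,2}$ directly from the congruence data. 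The paper's approach has the virtue of independently illustrating and cross-checking the machinery of Sections \ref{correspondence}--\ref{Igusa} on a concrete example (and is needed anyway to expand $W(p,T)$ to the closed polynomial form at the end of the section), whereas your approach treats the statement as the corollary of Theorem \ref{main theorem general} that it really is, so the only irreducible geometric inputs are the Fano enumeration and the corank verification. Both are sound. Two small points worth making explicit: first, the fact that the two rulings of $\S$ are both $\F_p$-rational, with $p+1$ lines apiece, relies on $y_1y_4-y_2y_3=0$ being a hyperbolic (not elliptic) quadric over $\F_p$ for every $p$, which holds because $\S$ is the Segre image of $\P^1\times\P^1$ defined over $\Z$; second, when you invoke ``rank exactly $3$'' for the augmented $4\times 8$ matrix over a line, you should note that rank $\geq 3$ follows from the antisymmetry argument and rank $\leq 3$ follows because a smooth quadric in $\P^3$ contains no planes, so corank $0$ (which would force a redundant system as in the empty-intersection case) is excluded -- this is exactly the step where the paper displays the reduced mod-$p$ augmented matrix and reads off rank $3$.
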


\subsection{Calculation}

As before, the basic building blocks of the zeta function are the
generating functions over lattices of fixed elementary divisor types.
$$A_I(p,T)=\sum_{\nu(\Lambda')=I}
p^{dw(\Lambda')-s(w(\Lamb)+w'(\Lamb))}\mu(\Lamb).$$
The elementary divisor types of the lattice $\Lamb$ in this care are
$$(p^{r_1+r_2+r_3},p^{r_2+r_3},p^{r_3},1),$$
where $r_i\geq 0$, the flag type is $I:=\{i:
r_i>0\}\subseteq\{1,2,3\}.$ Immediately it follows that  
$w(\Lambda')={\sum_{i\in I}ir_i },$  and
the multiplicity $\mu(\Lamb)$ can be calculated using the
1-1 correspondence between lattices and cosets. 

\begin{ex}\label{mult}
Let $\Lamb$ have elementary divisors $(p^{r_1+r_2},p^{r_2},1,1)$ so
that type 
consists of $I=\{1,2\}$ and  $r_I=(r_1,r_2,0,0)$.
The stabiliser of $\Lamb$ 
takes the form
\begin{equation*}
\begin{array}({c c c})
\GL_1(\Z_p) & * & *\\
 p^{r_1}\Z_p&   \GL_1(\Z_p)& *\\
p^{r_1+r_2}\Z_p
&p^{r_2}\Z_p&\GL_2(\Z_p)
\end{array}.
\end{equation*}
If we then exclude the terms coming from the flag varieties, 
a coset of the stabiliser takes form
\begin{equation*}
\begin{array}({c c c c})
1&0&0 &0\\
b_3&1 &0 &0\\
b_2&a_2&1&0\\
b_1&a_1&0& 1
\end{array},
\end{equation*} where $a_1,a_2\in p\Z_p/(p^{r_2})$, 
$b_1,b_2\in p\Z_p/(p^{r_1+r_2})$, $b_3\in p\Z_p/(p^{r_1}).$
We need to calculate how many choices we can make for each $a_i,b_i$. 
Thus the 
multiplicity of $\Lamb$ is 
\begin{align*}
\mu(1,2)&=\sum
\mu(r_2;b_2)\mu(r_2,b_1)\mu(r_1+r_2;a_3)\mu(r_2+r_1;a_2)
\mu(r_1;a_1)\\&=p^{-5}p^{3r_1+4r_2}
\end{align*}
where the $a_i,b_i$ are variables, and the ranges of summations
are the obvious ones from equation (\ref{sum mu}).
\end{ex}

For $w'(\Lamb)$ we need to consider the flags associated with $\Lamb$,
defined in Section \ref{flags}.
Some
of the flags in $\F_p^4$ intersect with the Segre surface and in the
light of the 
congruence conditions,
the weight function $w'(\Lamb)$ changes. 

As in (\ref{conditions}) Section \ref{congruences}, in order to determine $w'(\Lamb)$ we want to 
understand the different solutions to the set of congruences
\begin{align*}\g M(\beta_1) &\equiv 0 \mod
p^{r_1+r_2+r_3}\\ \g M(\beta_2) &\equiv 0 \mod
p^{r_2+r_3}\\\g M(\beta_3) &\equiv 0 \mod
p^{r_3}.\end{align*} Note that we only insist that $r_i\geq 0$.

Since the Segre surface
does not contain any planes, the vector $\beta_3$ can always
be chosen 
such that $\det(M(\beta_3))$ is a $p$-adic unit and the third condition reduces to
\begin{equation*}\g \equiv 0 \mod p^{r_3},\end{equation*}
and we get an immediate reduction, as in Lemma \ref{reducing weights}, 
\begin{align}w'(1,3)&=w'(1)+w'(3)\notag\\
w'(2,3)&=w'(2)+w'(3)\notag
\\ w'(1,2,3)&=w'(1,2)+w'(3).\label{splitting weight}\end{align}
This induces a similar reduction in generating functions.
\begin{align}
%A_{1^*,2}&=p^{2} A_{1^*}\cdot A_{2} \\
A_{1^*,3}&=p^{2}\notag
A_{1^*}\cdot A_3\notag
\\ A_{1^*,2,3}&=p^{2}A_{1^*,2}\cdot A_3\notag\\
A_{2^*,3}&= p^{2}A_{2^*}\cdot A_3\notag \\ A_{1^*,2^*,3}&=p^{2}
A_{1^*,2^*}\cdot A_3,\label{reduce3}
\end{align}
because $w(J\cup 3)=w(3)+w(J)$, for
$J\subseteq\{1^*,2^*,1,2\}$ and the $\mu$-function has
enough multiplicative properties, e.g.,
$\mu(r_i+r_3;a_1)=p^{r_3}\mu(r_i;a_1)$. 

There are four additional
cases we need to consider
reflecting the four
different geometric configurations how a flag can intersect the Segre surface.
\begin{itemize}
\item [1.] $M(\beta_1)$ and $M(\beta_2)$ are both non-singular mod
$p$.
\item [2.] $M(\beta_1)$ is singular mod $p$, but $M(\beta_2)$ is not
  singular mod $p$, and the
line $\<\overline{\beta_1},\overline{\beta_2}\>$ does not lie on the Segre surface.
\item [3.] $M(\beta_1)$ and $M(\beta_2)$ are both singular mod $p$, and the line
$\<\overline{\beta_1},\overline{\beta_2}\>$ lies on the Segre surface. 
\item [4.] $M(\beta_1)$ and $M(\beta_2)$ are both singular mod $p$, and the flag
$\<\overline{\beta_1}\>< \<\overline{\beta_1},\overline{\beta_2}\>$ consisting of a point and
a line lies on the Segre surface.\end{itemize}

We are now justified to use the same indexing as in Section
\ref{indexing} and if we denote by $\n_1(p)$ the number of points on the Pfaffian,
and similarly by $\n_2(p)$ the number
of lines, the
generating function takes the explicit form
as a sum over all admissible subsets of $\{1^*,2^*,1,2,3\}$:
\begin{align*}A(p,p^{-s})&=A_\emptyset+ \left(\binom{4}{1}_p
-\n_1(p)\right)A_1+ \left(\binom{4}{2}_p-\n_2(p)\right)A_2\\ &
+\binom{4}{3}_pA_3+\binom{3}{1}_p\left(\binom{4}{1}_p
-\n_1(p)\right)A_{1,2}\\&
+\binom{3}{2}_p\left(\binom{4}{1}_p-\n_1(p)\right)A_{1,3}\\& +
\binom{2}{1}_p\left(\binom{4}{2}_p-\n_2(p)\right)A_{2,3}+
\binom{2}{1}_p\binom{3}{1}_p\left(\binom{4}{1}_p-\n_1(p)\right)A_{1,2,3}\\&
+\n_1(p)A_{1^*}+\left(\binom{3}{1}_p\n_1(p)-\n_2(p)\binom{2}{1}_p
\right)A_{1^*,2}+ \binom{3}{2}_p\n_1(p)A_{1^*,3}+\\&
\binom{2}{1}_p\left(\binom{3}{1}_p\n_1(p)-\n_2(p)\binom{2}{1}_p
\right)A_{1^*,2,3}+\n_2(p)A_{2^*}\\&
+\n_2(p)\binom{2}{1}_pA_{1^*,2^*}+\binom{2}{1}_p\n_2(p)A_{2^*,3}+
\binom{2}{1}_p\n_2(p)\binom{2}{1}_pA_{1^*,2^*,3}.
\end{align*}

We rearrange this sum in order to see exactly which parts
depend on $\n_1(p)$ and $\n_2(p)$, and obtain
\begin{equation}
A(p,p^{-s})=W_0(p,p^{-s})+\n_1(p)W_1(p,p^{-s})+\n_2(p)W_2(p,p^{-s}),
\end{equation}
where
\begin{align*}
W_0(p,p^{-s})&=\sum_{I\subseteq\{1,2,3\}} b_I(p) A_I(p,p^{-s}),\\
W_1(p,p^{-s})&=\sum_{I\subseteq\{2,3\}} b_I(p) (A_{1^*\cup
  I}(p,p^{-s})-A_{1\cup I}(p,p^{-s}))\\
W_2(p,p^{-s})&=\left(1+\binom{2}{1}_p p^2 A_3\right)
\left((A_{2^*}-A_2)+\binom{2}{1}_p\left(A_{1^*,2^*}-A_{1^*,2}\right)\right).
\end{align*} The third formula follows from the 
reductions in generating functions (\ref{reduce3}). By Lemma
\ref{igusa1} $W_0(p,p^{-s})=I_3(X_1,X_2,X_3)$, as wanted.

%e.g.
%\begin{align*}A_{1,2}&= \sum_{r_1=1}^\infty
%p^{4r_1}T^{5r_1}\sum_{r_2=1}^\infty p^{8r_2}T^{6r_2} \cdot \\&
%\cdot\sum
%\mu(r_2;c_2)\mu(r_1+r_2;c_3)\mu(r_2;b_1)\mu(r_2+r_1;b_2)\mu(r_1;a_1)\\
%&= p^{-5}\sum_{r_1=1}^\infty p^{7r_1}T^{5r_1}\sum_{r_2=1}^\infty
%p^{12r_2}T^{6r_2}\\& =p^{-5}
%\frac{p^7T^5}{1-p^7T^5}\frac{p^{12}T^6}{1-p^{12}T^6} \end{align*}

Now we need to determine the generating functions
$A_{1^*}-A_1$, $A_{2^*}-A_2$ and $A_{1^*,2^*}-A_{1^*,2}$, which will
finish this example.

The easiest case is  $A_{1^*}-A_1$. By Lemma \ref{smthpt} the lattices
$(p^{r_1},1,1,1)$ 
lifting a point on Pfaffian are in
1-1 correspondence with the vector $\beta_1=(1,a_1,a_2,a_3)^t,$ $a_i\in
p\Z_p/p^{r_1}$, and the weight function is
$w'(\Lamb)=4r_1-2\min\{r_1,v_p(\det M(\beta_1))\}$. By Lemma \ref{valuation} on
p-adic valuations, this is equal to 
$w'(\Lamb)=4r_1-2\min\{r_1,v_p(a_1)\}$
and hence we may apply
Proposition \ref{basic sum} and get 
\begin{align*}
A_{1^*}-A_1&=\frac{p^{4-3s}-p^{5-5s}}{(1-p^{5-3s})(1-p^{7-5s})}=E_1(X_1,Y_1),
\end{align*}
where $X_1=p^{7-5s}$ and $Y_1=p^{5-3s}$.

Next we calculate the weight function over lattices that lift lines 
on the Segre surface.
There are two rulings of
lines on the Segre surface, but in fact both of them behave
similarly. We consider the line defined by $y_1=y_2=0.$ Lattices $\Lambda'$ of
type $\{2^*\}$ 
lifting this line are in one-to-one correspondence with
pairs of vectors $\beta_1=(a_1,a_2,0,1)^t, \beta_2=(b_1,b_2,1,0)^t$ where
$a_i,b_i\in p\Z/(p^{r_2}).$ We will define an equivalent lattice as in Definition \ref{equivalent vectors}
\begin{equation*}\alpha_1=\begin{array}({c})\lambda_1a_1 \\ \lambda_1a_2\\0\\
\lambda_1\end{array},
\alpha_2=\begin{array}({c})\lambda_1a_1+\lambda_2b_1\\\lambda_1a_2+
\lambda_2b_2\\ \lambda_2\\ \lambda_1\end{array}.\end{equation*}
Now the congruences to be satisfied by $\g$ are
\begin{align*}
\g\begin{array}({c c c c}) 0 & 0 & \lambda_1a_1 & \lambda_1a_2 \\ 0 & 0& 0
& \lambda_1
\\ -\lambda_1a_1& 0& 0&0\\
-\lambda_1a_2  & -\lambda_1& 0& 0\\
\end{array}&\equiv 0\mod p^{r_2}\\
\g\begin{array}({c c c c}) 0 & 0 & \lambda_1a_1+\lambda_2b_1 & \lambda_1a_2+\lambda_2b_2 \\ 0 & 0& \lambda_2
&\lambda_1
\\ -\lambda_1a_1-\lambda_2b_1& -\lambda_2& 0&0\\
-\lambda_1a_2-\lambda_2b_2  & -\lambda_1& 0& 0\\
\end{array}&\equiv 0\mod p^{r_2}.
\end{align*}
The corank of this system of linear equations is 1, as
suitable row and column operations show. Thus we get the result as in
the equation  (\ref{pure weight 2}) the weight function is
$w'(\Lamb)=4r_2-\min\{r_2,v_p(\det(M(\alpha_1))),v_p(\det(M(\alpha_2)))\}.$ 
Now applying
the reduction Lemma \ref{valuations} in the $p$-adic valuations, 
we have to evaluate 
$\min\{r_2,v_p(\lambda_1^2a_1),v_p(\lambda_1^2a_1+
\lambda_1\lambda_2(b_1-a_2)-\lambda_1\lambda_2b_2)\}.$ Since
$\lambda_1,\lambda_2$ are $p$-adic units, this reduces to 
$\min\{r_2,v_p(a_1),v_p(b_1),v_p(b_2)\}.$ By
Proposition \ref{basic sum}
\begin{align*}
A_{2^*}-A_2&=\sum_{r_2=1}^\infty \sum_{a_i,b_i=1}^{r_2}
p^{8r_2}T^{6r_2-\min\{r_2,b_1,b_2,a_1\}}\mu(r_2,a_1,a_2,b_1,b_2)\\&=
\frac{p^{8-5s}(1-p^{9-6s})}{(1-p^{9-5s})(1-p^{12-6s})}=E_2(X_2,Y_2),
\end{align*}
for $X_2=p^{12-6s}$  and $Y_2=p^{9-5s}$.

Finally, we need to calculate the generating function
$A_{1^*,2^*}$ over a lattice of type $(p^{r_1+r_2},p^{r_2},1,1).$
A lattice $\Lambda'$ lifting a flag of type $\{1^*,2^*\}$, is in 
one-to-one correspondence with pairs of vectors
$$\beta_1=(a_1,a_2,a_3,1)^t$$ $$\beta_2=(b_1,b_2,1,0)^t$$ where $a_1,a_2\in
p\Z/(p^{r_1+r_2}),$  $a_3\in p\Z/(p^{r_1})$ and $b_1,b_2\in
p\Z/(p^{r_2}).$ Any other choice of a flag and its affine
neighbourhood behaves in the same way.
Again, change variables
\begin{equation*}\alpha_1=\begin{array}({c})\lambda_1a_1 \\ \lambda_1a_2\\\lambda_1a_3\\
\lambda_1\end{array},
\alpha_2=\begin{array}({c})\lambda_1\bar{a}_1+\lambda_2b_1\\\lambda_1\bar{a}_2+
\lambda_2b_2\\ \lambda_1\bar{a}_3+\lambda_2\\ \lambda_1\end{array},\end{equation*}
where $\bar{a}_i$ denotes the reduction mod $p^{r_2}$. As in the
previous case, the corank of the system of equations is 1, and thus
using the equation (\ref{mixed weight}) 
the weight function is
\begin{align*}w'(\Lamb)=&4r_2+4r_1-\min\{r_1,v_p(\det(M(\alpha_1)))\}-\\ &
\min\{r_1+r_2,v_p(\det(M(\alpha_1))),r_1+v_p(\det(M(\alpha_2)))\},\end{align*} and an application of Lemma \ref{valuations}  yields \begin{align*}
&\min\{r_1,v_p(\lambda_1^2(a_1-a_2a_3)) \}=\min\{r_1,v_p(a_1)
  \}\end{align*} and
\begin{align*}& \min\{r_1+r_2,v_p(\lambda_1^2(a_1-a_2a_3))
,r_1+v_p(\lambda_1^2(\bar{a}_1-\bar{a}_2\bar{a}_3)
+\lambda_1\lambda_2(b_1-\bar{a}_2-b_2\bar{a}_3)+\lambda_2^2(b_2))\}\\&
=\min\{r_1+r_2,v_p(a_1-a_2a_3)
,r_1+\min\{v_p(\bar{a}_1-\bar{a}_2\bar{a}_3),v_p(b_1-\bar{a}_2-b_2\bar{a}_3),v_p(b_2)\}\}\\&=
\min\{r_1+r_2,v_p(a_1-a_2a_3)
,\min\{r_1+v_p(\bar{a}_1-\bar{a}_2\bar{a}_3),r_1+v_p(b_1-\bar{a}_2-b_2\bar{a}_3),r_1+v_p(b_2)\}\}\\
&=\min\{r_1+r_2,v_p(a_1-a_2a_3)
,r_1+v_p(\bar{a}_1-\bar{a}_2\bar{a}_3),r_1+v_p(b_1-\bar{a}_2-b_2\bar{a}_3),r_1+v_p(b_2)\}\\
&=\min\{r_1+r_2,v_p(a_1)
,r_1+v_p(b_1),r_1+v_p(b_2)\}.\end{align*} Now 
we can write $A_{1^*.2^*}-A_{1^*,2}$ in a form where we can first
apply Lemma \ref{igusa3}
to get $$A_{1^*.2^*}-A_{1^*,2}=I_1(Y_1)(A_{2^*}-A_2)$$ 
and after that Proposition \ref{basic sum}, to obtain the final result
as above.

\subsection{A closed expression}

$$\zeta_{G_\S,p}^\ns(s)=\zeta_{\Z^4,p}(s)\zeta_p(8s-16)\zeta_p(5s-7)
\zeta_p(6s-12)\zeta_p(7s-15)\zeta_p(3s-6)\zeta_p(9s-5)\cdot
W(p,p^{-s}),$$ where
\begin{align*}W(p,T)&=1+p^4T^4+2p^5T^3-p^5T^5+2p^8T^5+p^9T^5-p^8T^6-p^9T^6+2p^{10}T^6\\
& +p^{11}T^7+p^{12}T^7+p^{13}T^7+
p^{14}T^7-p^{10}T^8-p^{11}T^8-p^{12}T^8+p^{13}T^8\\
&-2p^{13}T^9-2p^{14}T^9+2p^{15}T^9+p^{16}T^9+
p^{14}T^{10}-2p^{15}T^{10}-2p^{16}T^{10}\\&
+p^{17}T^{10}+2p^{18}T^{10}+2p^{19}T^{10}-2p^{17}T^{11}
-p^{18}T^{11}-2p^{19}T^{11}-p^{20}T^{11}\\&
-p^{18}T^{12}-2p^{19}T^{12} -p^{21}T^{12}
+p^{22}T^{12}+p^{23}T^{12}+p^{19}T^{13}
-p^{20}T^{13}\\&-p^{21}T^{13}-p^{22}T^{13}-p^{23}T^{13}+p^{24}T^{13}
+p^{20}T^{14}+p^{21}T^{14}-p^{22}T^{14}\\&-2p^{24}T^{14}-p^{25}T^{14}
-p^{23}T^{15}-2p^{24}T^{15}-p^{25}T^{15}-2p^{26}T^{15}
+2p^{24}T^{16}\\&+2p^{25}T^{16}+p^{26}T^{16}-2p^{27}T^{16}-2p^{28}T^{16}+p^{29}T^{16}
+p^{27}T^{17}+2p^{28}T^{17}\\&-2p^{29}T^{17}-2p^{30}T^{17}
+p^{30}T^{18}-p^{31}T^{18}-p^{32}T^{18}-p^{33}T^{18}
+p^{29}T^{19}\\&+p^{30}T^{19}+p^{31}T^{19}+p^{32}T^{20}+2p^{33}T^{20}-p^{34}T^{20}
-p^{35}T^{20}+p^{34}T^{21}\\&+2p^{35}T^{21}-p^{38}T^{21}
+2p^{38}T^{23}+p^{39}T^{23}+p^{43}T^{26}.\end{align*}

\noindent{\bf Acknowledgements} The author gratefully acknowledges
the support she has received by
Lady Davis and Golda Meir Postdoctoral Fellowships at the Hebrew
University of Jerusalem. The comments by Tim
Burness, Marcus du Sautoy and Christopher Voll were very helpful in
preparation of this paper.

\bibliographystyle{acm}
    \bibliography{bibJerusalem}

\end{document}